\theoremstyle{plain}
\newtheorem*{thm*}{Theorem}
\newtheorem{thm}{Theorem}
\Crefname{thm}{Theorem}{Theorems}
\newtheorem*{lem*}{Lemma}
\newtheorem{lem}[thm]{Lemma}
\Crefname{lem}{Lemma}{Lemmas}
\newtheorem*{claim*}{Claim}
\newtheorem{claim}[thm]{Claim}
\crefname{claim}{Claim}{Claims}
\Crefname{claim}{Claim}{Claims}
\Crefname{prop}{Proposition}{Propositions}
\newtheorem{cor}[thm]{Corollary}
\Crefname{cor}{Corollary}{Corollaries}
\Crefname{conj}{Conjecture}{Conjectures}
\newtheorem{obs}[thm]{Observation}
\Crefname{obs}{Observation}{Observations}
\theoremstyle{definition}
\newtheorem{prob}[thm]{Problem}
\Crefname{prob}{Problem}{Problems}
\newtheorem{defn}[thm]{Definition}
\Crefname{defn}{Definition}{Definitions}
\theoremstyle{remark}
\renewenvironment{proof}[1][]{\begin{trivlist}
\item[\hspace{\labelsep}{\bf\noindent Proof#1.\/}] }{\qed\end{trivlist}}
\newcommand{\eps}{\varepsilon}
\newcommand{\comp}[1]{#1^\mathsf{c}}
\renewcommand{\t}{t}
\newcommand{\good}{good}
\newcommand{\approxAedge}{n ^ {3 / 2} \log n}
\newcommand{\approxBedge}{n ^ {7 / 4} \sqrt{\log n}}
\newcommand{\approxAvx}{n ^ {1 / 2} \log n}
\newcommand{\approxBvx}{n ^ {3 / 4} \sqrt{\log n}}
\newcommand{\degNonTriangular}{\deg_{\text{Non-} \Delta}}
\newcommand{\triangularVs}{triangular}
\newcommand{\myeta}{\eps}
\newcommand{\myK}{\kappa}
\newcommand{\sqrtEta}{\sqrt{\myeta}}
\newcommand{\nonNegReals}{\mathbb{R}^{\ge 0}}
\newcommand{\remark}[1]{} 
\newcommand{\remove}[1]{}
\newcommand{\ceil}[1]{
    \lceil #1 \rceil
}
\newcommand{\floor}[1]{
    \lfloor #1 \rfloor
}
\let\oldsqrt\sqrt
\def\sqrt{\mathpalette\DHLhksqrt}
\def\DHLhksqrt#1#2{%
\setbox0=\hbox{$#1\oldsqrt{#2\,}$}\dimen0=\ht0
\advance\dimen0-0.2\ht0
\setbox2=\hbox{\vrule height\ht0 depth -\dimen0}%
{\box0\lower0.4pt\box2}}
\begin{document}

\title{Minimising the number of triangular edges}

\author{
    Vytautas Gruslys\thanks{
        Department of Pure Mathematics and Mathematical Statistics, 	
        University of Cambridge, 
        Wilberforce Road, 
        CB3\;0WB Cambridge, 
        UK;
        e-mail:
        \mbox{\{\texttt{v.gruslys,s.letzter}\}\texttt{@dpmms.cam.ac.uk}}\,.
    }
    \and
    Shoham Letzter\footnotemark[1]
}

\maketitle

\begin{abstract}

    \setlength{\parskip}{\medskipamount}
    \setlength{\parindent}{0pt}
    \noindent

    We consider the problem of minimising the number of edges that are contained
    in triangles, among $n$-vertex graphs with a given number of edges.
    We prove a conjecture of F\"uredi and Maleki that gives an exact formula for
    this minimum, for sufficiently large $n$.

\end{abstract}

\section{Introduction}

    Mantel \cite{mantel} proved that a triangle-free graph on $n$ vertices 
    has at most $\floor{n^2 / 4}$ edges. In other words, a graph on $n$
    vertices with at least $\floor{n^2 / 4} + 1$ edges contains a triangle.  A
    natural question arises from this classical result: how many triangles must
    such a graph have? And, indeed, Rademacher \cite{rademacher} extended
    Mantel's result by showing that any graph on $n$ vertices with $\floor{n^2 /
    4} + 1$ edges contains at least $\floor{n / 2}$ triangles, a bound that can
    readily be seen to be best possible (see \Cref{fig:example}). 
    
    \begin{figure}[h] \centering
        \includegraphics[scale = .8]{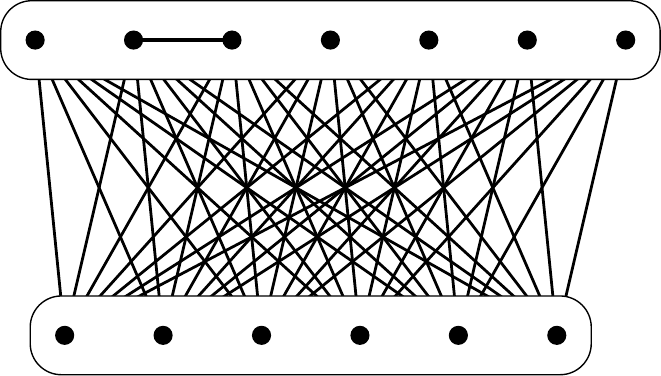}
        \caption{$\floor{n^2/4} + 1$ edges, $\floor{n/2}$ triangles,
        $2\floor{n/2}+1$ triangular edges}
        \label{fig:example}
    \end{figure} 
    
    Erd\H{o}s \cite{erdosHebrew} conjectured that a further generalisation
    holds: any graph on
    $n$ vertices with at least $\floor{n^2 / 4} + l$ edges contains at least $l
    \floor{n / 2}$ triangles, for every $1 \le l < \floor{n / 2}$.
    Erd\H{o}s \cite{erdosHebrew,erdos} proved his conjecture for $l
    \le cn$ for some constant $c > 0$.  It is not hard to see that the bound on
    the number of triangles is best possible, by adding $l$ edges, that do not
    span a triangle, to the larger part of the complete bipartite graph
    $K_{\floor{n / 2}, \ceil{n / 2}}$.  Furthermore, it is not hard to see that
    the bound on $l$ is best possible, by considering a similar construction.

    \begin{figure}[h]\centering
        \includegraphics[scale = .8] {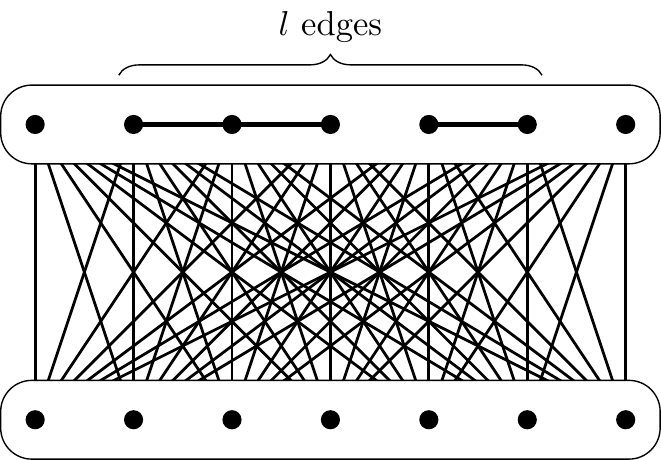}
        \caption{$\floor{n^2/4} + l$ edges, $l \floor{n/2}$ triangles}
        \label{fig:example-l-edges}
    \end{figure}

    Erd\H{o}s's conjecture was resolved by Lov\'asz and Simonovits
    \cite{lovasz-simonovits}, who also characterised \cite{lovasz-simonovitsII}
    the graphs with $n$ vertices and $\floor{n^2 / 4} + l$ edges that minimise
    the number of triangles, for every $l \le c n^2$ where $c > 0$ is fixed.
    Razborov \cite{razborov} determined the asymptotic behaviour of the number
    of triangles in graphs with $n$ vertices and $\floor{n^2 / 4} + l$ edges
    where $l = \Omega(n^2)$.

    In this paper we consider a similar problem, concerning the number of edges
    that are contained in triangles (we shall call such edges \emph{triangular
    edges}) rather than the number of triangles.  The first result in this
    direction was obtained by Erd\H{o}s, Faudree and Rousseau
    \cite{erdos-faudree-rousseau} who proved that any graph with $n$ vertices and
    $\floor{n^2 / 4} + 1$ edges has at least $2 \floor{n / 2} + 1$ triangular
    edges. This bound is best possible (see \Cref{fig:example}).
    
    \remark{is it ok to mention the term `triangular edges' in passing?}

    It is very natural, similarly to the question about the number of triangles,
    to ask how many triangular edges an $n$-vertex graph with $e$ edges must
    have, where $e$ is any integer satisfying $\floor{n^2 / 4} < e \le
    \binom{n}{2}$.  After some thought, a natural example comes to mind. Given
    integers $a, b, c$, we denote by $G(a, b, c)$ the graph with $n = a + b + c$
    vertices, which consists of a clique $A$ of size $a$ and two independent
    sets $B$ and $C$ of sizes $b$ and $c$ respectively, such that all edges
    between $B$ and $A \cup C$ are present, and there are no edges between $A$
    and $C$ (see \Cref{fig:good-graph}). 

    \begin{figure}[h]\centering 
        \includegraphics[scale=.8]{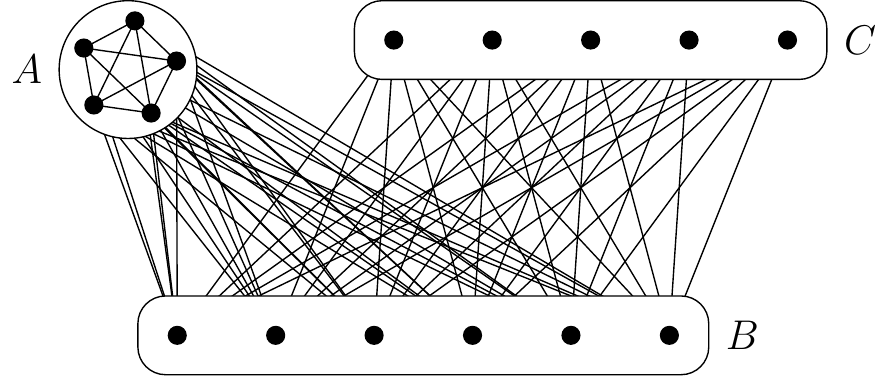}

        \caption{The graph $G(a, b, c)$ (here $a = 4, b = 6, c = 5$)}
        \label{fig:good-graph}
    \end{figure}

    Note that the graph $G(a, b, c)$ has
    $\binom{a}{2} + b(a + c)$ edges, $bc$ of them are non-triangular.
    We remark that the extremal example (depicted in \Cref{fig:example}) for the
    aforementioned result by Erd\H{o}s, Faudree and Rousseau
    \cite{erdos-faudree-rousseau} is isomorphic to $G(2, \floor{n/2}, \ceil{n/2} - 2)$.
    
    \remove{
    We say that a graph is \emph{\good{}} if it is isomorphic to a graph $G(a,
    b, c)$ for some $a, b, c$. 
    }

    F\"uredi and Maleki \cite{furedi-maleki} conjectured that
    the minimisers of the number of triangular edges are graphs of the form
    $G(a, b, c)$, or subgraphs of such graphs.

    \begin{restatable}{conj}{conjMain} 
        \label{conj:chracterise-extremal-examples}

        Let $n$ and $e > \floor{n^2 / 4}$ be integers and let $G$ be a graph
        with $n$ vertices and $e$ edges that minimises the number of triangular
        edges. Then $G$ is a subgraph of a graph $G(a, b, c)$ for some $a, b, c$.
        
    \end{restatable}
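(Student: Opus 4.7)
The plan is to run a two-phase argument: a stability step that approximately locates the extremal structure, followed by a clean-up step that pins down the exact form. A useful starting observation is that the set of non-triangular edges of any graph $G$ forms a triangle-free subgraph $H$ (if $uv$, $vw$, and $uw$ were all non-triangular, then $\{u,v,w\}$ would itself be a triangle), so the number of triangular edges equals $e - |E(H)|$, and the task is to maximise $|E(H)|$ subject both to $H$ being triangle-free and to $G$ having exactly $e$ edges. The graph $G(a, b, c)$ realises the extremum by placing $H$ on the bipartite part $(B, C)$ and absorbing the surplus edges into a clique $A$ together with a complete bipartite part $A \times B$, so that no edge inside $A \cup B$ is wasted, each being triangular via a vertex of $B$ or $A$.

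For the stability phase, I would fix a minimiser $G$ and analyse its non-triangular subgraph $H$ by a Lov\'asz--Simonovits style stability statement for triangle-free graphs. This should produce a bipartition $V(G) = X \cup Y$ across which almost all non-triangular edges pass, and allow me to isolate a ``core'' set $A$ of vertices with many neighbours on both sides, leaving ``wing'' sets $B \subseteq X$ and $C \subseteq Y$. A careful accounting should then force $A$ to be a near-clique with almost all edges to $B$, force $B$ to be near-independent with almost all edges to $C$, and show that there are almost no edges between $A$ and $C$, matching the template of Figure~\ref{fig:good-graph}.

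The clean-up phase is a local exchange argument. For each vertex or edge deviating from the $G(a, b, c)$ pattern, I would exhibit a small surgery---reassigning a vertex between parts, swapping a stray $A$--$C$ edge for a missing $A$--$A$ or $A$--$B$ edge, or moving a low-degree vertex from $B$ into $A$ (or vice versa)---that strictly decreases the number of triangular edges while preserving $|E(G)| = e$, contradicting minimality. Iterating such moves, and using that no two non-triangular edges can share a common neighbour on both sides (a consequence of the triangle-freeness of $H$), should force $G$ to be a subgraph of some $G(a, b, c)$.

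The main obstacle, I expect, lies in bridging the two phases uniformly in $e$: the stability error is additive, while the gap that the clean-up step has to close can be much smaller. This will likely require separate treatments of the small-$a$ regime, where $e$ is close to $\lfloor n^2 / 4 \rfloor$ and the non-triangular bipartite part is nearly balanced, and the large-$a$ regime, where $e$ is close to $\binom{n}{2}$ and $b, c$ are tiny so that most exchange moves have very little room. Handling the intermediate regime, where neither extreme heuristic directly applies, and ensuring the exchange moves terminate at a genuine subgraph of $G(a, b, c)$ rather than some near-miss, is where I anticipate most of the technical effort will concentrate.
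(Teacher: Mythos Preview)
The statement you are attempting is labelled in the paper as a \emph{conjecture}, and the paper explicitly does not prove it. The main result, Theorem~\ref{thm:main}, shows only that for sufficiently large $n$ the minimum number of triangular edges is \emph{attained} by some $G(a,b,c)$ (equivalently, by a subgraph thereof once the edge count is fixed); it does not show that every minimiser has this form, and the authors say so both after the statement of Theorem~\ref{thm:main} and in Section~\ref{sec:conclusion}. So there is no proof in the paper to compare against, and your proposal is aiming at a strictly stronger target than what the paper establishes --- and for all $n$, whereas the paper works only for $n$ large.

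That caveat aside, your outline does parallel the paper's strategy for the weaker Theorem~\ref{thm:main}, including the three-regime split by edge density. The substantive divergence is in how you propose to obtain structure. Running Lov\'asz--Simonovits stability on the non-triangular subgraph $H$ is essentially what the paper does in the almost-bipartite regime (Section~\ref{sec:almost-bipartite}), where $e(H)=t(G)$ is close to $n^2/4$ and stability genuinely bites. But in the middle range $t(G)$ is only $\Theta_\delta(n^2)$ and can be far below $n^2/4$, and in the almost-complete range it is of order $\eps^{3/2}n^2$; in neither case does triangle-free stability on $H$ yield a usable bipartition of $V(G)$, let alone isolate the clique part $A$. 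The paper's route in the middle range is entirely different: it introduces \emph{compressed graphs} (Lemma~\ref{lem:granulation}, forcing large independent sets to contain large clone-classes) and an \emph{exchange lemma} (Lemma~\ref{lem:exchange}, trading a surplus in $e$ for a deficit in $t$ at a fixed rate), and uses the F\"uredi--Maleki weighted-graph reduction to locate a linear-sized clique of triangular vertices directly, rather than via any stability statement for $H$. Your plan contains no analogue of these tools, and the local surgeries you describe for the clean-up phase are not enough on their own: throughout Section~\ref{sec:mid-range} a single stray edge or vertex does not by itself contradict optimality, and the paper repeatedly needs the exchange lemma to amplify a small structural defect into an $\Omega(n)$- or $\Omega(n^2)$-sized violation before an exchange move can close the argument. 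As written, your stability-then-exchange scheme would stall precisely in the intermediate regime you correctly flag as the hard one.
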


    The condition that $G$ is a subgraph of a graph $G(a, b, c)$
    (rather than simply requiring equality) is due to the fact that we specify
    the exact number of edges, so the minimiser may be isomorphic to $G(a, b,
    c)$ with a few edges removed.
    
    \Cref{conj:chracterise-extremal-examples} is a generalisation of the case of
    $n$-vertex graphs with $\floor{n^2 / 4} + 1$ edges, where, as mentioned
    above, the minimiser is indeed $G(2, \floor{n/2}, \ceil{n/2} - 2)$.

    The conjecture implies, in particular, that every
    graph with $n$ vertices and $e$ edges has at least the following number of
    triangular edges:
    \begin{equation*}
        g(n, e) = \min \{ e - bc : a + b + c = n, \binom{a}{2} + b(a + c) \ge e \}.
    \end{equation*}
    F\"uredi and Maleki \cite{furedi-maleki} proved an approximate version of
    the latter statement, which reads as follows. 

    \begin{thm}[F\"uredi and Maleki \cite{furedi-maleki}]
        \label{thm:furedi-maleki}
        Every $n$-vertex graph with $e$ edges has at least $g(n, e) - 3n/2$
        triangular edges.
    \end{thm}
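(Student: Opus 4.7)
Let $T \subseteq E(G)$ denote the set of non-triangular edges of $G$. My plan is to exhibit a partition $V = A \sqcup B \sqcup C$ with $\binom{a}{2} + b(a+c) \ge e$ and $|T| \le bc + 3n/2$. This will suffice, since $g(n,e) \le e - bc$ by the definition of $g$ as a minimum over such valid triples, giving $e - |T| \ge e - bc - 3n/2 \ge g(n,e) - 3n/2$ triangular edges.

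I will start from two elementary observations about $T$. First, $T$ is itself triangle-free, because any triangle with all three edges in $T$ would exhibit each of them as triangular. Second, for each $uv \in T$ the neighborhoods $N_G(u)$ and $N_G(v)$ are disjoint (any common neighbor would form a triangle through $uv$), and since $u \in N_G(v)$ and $v \in N_G(u)$, this yields $d_G(u) + d_G(v) \le n$.

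I will define the partition by mimicking the extremal $G(a,b,c)$, where the $A$-vertices have only triangular incident edges, the $C$-vertices have only non-triangular incident edges, and the $B$-vertices interpolate. Concretely, set $A = \{v : d_T(v) = 0\}$, $C = \{v : d_G(v) = d_T(v) > 0\}$, and $B = V \setminus (A \cup C)$. All edges of $T$ then lie within $B \cup C$, and the degree bound from the previous paragraph will force $T$-edges within the ``heavier'' of $B, C$ to be rare (two $T$-connected vertices both saturating the degree bound have $d_G$ near $n/2$). A short analysis should then give $|T| \le bc + O(n)$.

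The harder half will be the edge lower bound $\binom{a}{2} + b(a+c) \ge e$. For this one must balance the ``structural'' non-edges (inside $A$, between $A$ and $B$, between $B$ and $C$) against the ``extra'' edges (inside $B$, inside $C$, between $A$ and $C$). Since each extra edge is by definition triangular, it is certified by a common neighbour, and a charging argument can distribute the defect to this witness at $O(1)$ cost per vertex. The main obstacle will be carrying out this charging cleanly: the partition is defined via $T$, so a few boundary vertices (where the defining equalities are nearly but not strictly met) can throw the bounds off. Handling these likely calls for a final cleanup pass reassigning $O(n)$ such vertices into the class that keeps both inequalities tight, with the accumulated error absorbed into the $3n/2$ slack.
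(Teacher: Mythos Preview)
Your degree-type partition is too brittle to deliver the bound $|T| \le bc + O(n)$. Take $G(a_0,b_0,c_0)$ with $a_0 \ge 2$ and add a single edge $b_1 b_2$ inside the middle part. Now every $c \in C$ sits in the triangle $b_1 b_2 c$, so all edges $b_1 c$, $b_2 c$ become triangular. Consequently $b_1,b_2$ have $d_T = 0$ and migrate into your $A$, while every original $C$-vertex now has two triangular incident edges and migrates into your $B$. Your partition becomes $A' = A \cup \{b_1,b_2\}$, $B' = (B\setminus\{b_1,b_2\}) \cup C$, $C' = \emptyset$, so $b'c' = 0$, whereas $|T| = (b_0-2)c_0 = \Theta(n^2)$ in the middle range. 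One added edge has relocated $\Theta(n)$ vertices and broken the target inequality by $\Theta(n^2)$; no ``cleanup pass reassigning $O(n)$ vertices'' fixes this, because there is no local rule telling you \emph{which} vertices to move or where. The difficulty is not a few boundary vertices but the fact that the map from $G$ to your partition is globally unstable.

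The F\"uredi--Maleki proof, whose engine the paper reproduces as \Cref{lem:process-no-empty-triangle}, \Cref{lem:approx-by-good-weighted-graph} and (with constant $5n$ in place of $3n/2$) \Cref{cor:approx-by-integer-example}, avoids this by never trying to read the partition off the graph. It works in the Motzkin--Straus setting of weighted graphs: whenever three vertices $u_1,u_2,u_3$ form an independent set, a linear shift of their weights can zero one of them out without decreasing $|G|$, $e(G)$ or $t(G)$. Iterating this (plus a short analysis) collapses any graph down to a \emph{good} weighted graph --- a clique together with a single non-triangular edge $uv$ --- which is already a weighted $G(\alpha,\beta,\gamma)$; rounding $\alpha,\beta,\gamma$ to integers costs $O(n)$ in $t$. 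The partition here is produced by an optimisation process rather than by classifying vertices, which is precisely what makes it robust to the perturbation above. If you want to pursue a direct combinatorial proof, you would need a genuinely different mechanism for choosing $A,B,C$ --- one that is stable under edge insertions --- and it is not clear such a mechanism exists short of reinventing the weight-shifting argument.
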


    Our main result is an exact version of \Cref{thm:furedi-maleki}: we shall prove that a
    graph with $n$ vertices and $e$ edges has at least $g(n, e)$ triangular edges,
    provided that $n$ is large enough.

    Before we state our result, we make a few remarks.
    We find it convenient to consider a reformulation of the latter statement.
    Firstly, it turns out to be more convenient to consider the clearly
    equivalent problem of maximising the number of non-triangular edges among 
    $n$-vertex graphs with $e$ edges.  Thus, given a graph $G$, we denote
    by $\t(G)$ the number of \emph{non-triangular} edges in $G$.  Secondly,
    given $n$ and $e$, instead of restricting our attention to $n$-vertex graphs
    with exactly $e$ edges, we consider $n$-vertex graphs with \emph{at least}
    $e$ edges. Since the maximum number of non-triangular edges cannot increase
    by adding edges (assuming that we have at least $\floor{n^2 / 4}$
    edges), this does not affect the problem of maximising the number of
    non-triangular edges, yet it allows us to concentrate on graphs $G(a, b, c)$
    without having to consider their subgraphs. 
    
    We are now ready to state our
    main result.

    \remark{better idea for the notation for the number of non-triangular edges?}

    \begin{thm} \label{thm:main}
        There exists $n_0$ such that for any graph $G$ with at least $n_0$
        vertices there is a graph $H = G(a, b, c)$ (for some integers $a, b, c$)
        that satisfies $|H| = |G|$,
        $e(H) \ge e(G)$ and $\t(H) \ge \t(G)$.  
    \end{thm}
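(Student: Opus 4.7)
The plan is to refine F\"uredi and Maleki's approximate bound (\Cref{thm:furedi-maleki}) and remove its additive $3n/2$ slack via a stability argument combined with local modifications. Write $E = e(G)$ and $T = t(G)$; we must exhibit a triple $(a,b,c)$ with $a+b+c = n$, $\binom{a}{2}+b(a+c) \ge E$ and $bc \ge T$, and then $H = G(a,b,c)$ will serve as the required witness.

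The starting structural observation is that the subgraph $F \subseteq G$ formed by the non-triangular edges is triangle-free: any triangle in $F$ would be a triangle in $G$, forcing all three of its edges to be triangular, a contradiction. Hence $T = e(F) \le \lfloor n^2/4 \rfloor$ by Mantel, and moreover, by Erd\H{o}s--Simonovits stability, if $T$ is close to $\lfloor n^2/4 \rfloor$ then $F$ is close to a balanced complete bipartite graph. This yields a canonical partition $V(G) = B' \cup C'$, up to a small set of exceptional vertices which we collect into a third class $A'$ (those vertices with many $F$-neighbours on both sides of the bipartition).

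I would then split into two regimes. In the \emph{sparse regime}, where $T$ is well below $\lfloor n^2/4 \rfloor$, one has enough room to choose $a$ a little larger than whatever \Cref{thm:furedi-maleki} provides, gaining $\Theta(n)$ in the constraint $\binom{a}{2}+b(a+c) \ge E$, so the $3n/2$ slack is absorbed by an explicit parameter adjustment. In the \emph{dense regime}, where $T$ is close to $\lfloor n^2/4 \rfloor$, I would transform $G$ into $H = G(|A'|, |B'|, |C'|)$ through a sequence of local moves: adding all missing edges inside $A'$, between $A'$ and $B'$, and between $B'$ and $C'$; and deleting all edges inside $B'$, inside $C'$, and between $A'$ and $C'$. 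For each move I would verify that the change in $t$ is non-negative, leveraging the near-bipartite structure of $F$: a typical added edge is triangular (neutral for $t$), while a typical deleted edge is non-triangular (strictly positive for $t$).

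The main obstacle I anticipate lies in the bookkeeping around the exceptional set $A'$. A single vertex of $A'$ can distort $t$ by up to $\Theta(n)$, so each $v \in A'$ must be classified according to its $G$-degrees into the three parts and either absorbed into $A$ (when it behaves like a hub, with many neighbours on both sides) or reassigned to $B$ or $C$ based on a carefully tuned degree criterion. A further delicate step is a final compensating adjustment: if the constructed $H$ ends up with strictly more than $E$ edges, one shifts a single vertex between $B'$ and $C'$ to rebalance $bc$ against $\binom{a}{2}+b(a+c)$ and simultaneously recover both required inequalities. This final optimisation over boundary parameters, together with the two-sided accounting of $t$ under each local move, is where the additive slack of \Cref{thm:furedi-maleki} is bought down to zero, and I expect the bulk of the technical work to lie here.
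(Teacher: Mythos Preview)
Your dense regime --- where $T = t(G)$ is close to $\lfloor n^2/4 \rfloor$ --- is essentially the paper's ``almost bipartite'' case, and your plan (stability for the triangle-free graph $F$ of non-triangular edges, followed by local modifications around an exceptional set) matches the paper's approach there. So that half is on the right track, though the details are more delicate than you suggest (the paper needs a further case split on whether $e - n^2/4$ is $O(n)$ or $\Omega(n)$).

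The real problem is your sparse regime. Saying ``there is enough room to choose $a$ a little larger \ldots\ so the $3n/2$ slack is absorbed'' is not an argument, and in fact it does not work: increasing $a$ forces $b$ or $c$ down, which \emph{hurts} the constraint $bc \ge T$, so you cannot repair both inequalities simultaneously by a naive parameter shift. More importantly, your sparse regime covers the entire range $(1/4+\delta)n^2 \le e \le (1/2-\delta)n^2$, which is the heart of the paper and by far its hardest part. Here stability of $F$ gives you almost nothing (since $T$ is far from $n^2/4$), and F\"uredi--Maleki's approximate result cannot be upgraded by rounding. The paper instead introduces two substantial tools: an \emph{exchange lemma} (quantifying, for optimal graphs, how much $t$ must drop when $e$ rises and vice versa), and a reduction to \emph{compressed graphs} (graphs whose independent sets are unions of few sets of clones). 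With these it first shows an optimal compressed graph has $\Omega(n)$ triangular vertices forming a clique $A$, then builds a partition $\{A,B,C\}$ and, through several further structural claims, proves the graph is literally $G(|A|,|B|,|C|)$. None of this machinery is hinted at in your proposal, and without it the sparse regime is a genuine gap.
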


    We note that \Cref{thm:main} comes close to proving
    \Cref{conj:chracterise-extremal-examples} (for sufficiently large $n$) as it
    shows that the minimum number of triangular edges is attained by a
    graph
    $G(a, b, c)$ or a subgraph of $G(a, b, c)$, but we do not explicitly
    show that such graphs are the only minimisers.

    \remark{should we add the comment that our method should (`with high
        confidence') suffice to prove the conjecture for large $n$, with some
    effort which we spare from readers? (I put this remark in the conclusion)}

    \subsection{Structure of the paper}
        
        The proof of our main result, \Cref{thm:main}, is divided into three
        parts, according to the number of edges: graphs that are close to being
        bipartite, i.e., where the number of edges is close to $n^2 / 4$; graphs
        that are close to being complete, i.e., the number of edges is close to
        $\binom{n}{2}$; and the middle range, where the number of edges is
        bounded away from both $n^2 / 4$ and $\binom{n}{2}$.
        
        We state \Cref{thm:almost-bipartite,thm:mid-range,thm:almost-complete},
        which are the theorems corresponding to the aforementioned three ranges,
        in \Cref{sec:overview}, and give an overview of their proofs.
        In \Cref{sec:tools} we describe the tools that
        we shall use and introduce relevant notation.  We prove the three
        theorems in \Cref{sec:almost-bipartite,sec:mid-range,sec:almost-complete}: 
        \Cref{thm:almost-bipartite}, which deals with
        graphs with close to $n^2 / 4$ edges, will be proved in 
        \Cref{sec:almost-bipartite}; the proof of \Cref{thm:mid-range}, for the
        middle range, is the heart of this paper and will be given in
        \Cref{sec:mid-range}; and \Cref{thm:almost-complete} will be proved in
        \Cref{sec:almost-complete}.  We conclude the paper with
        \Cref{sec:conclusion} where we make some remarks and mention open
        problems.

\section{Overview} \label{sec:overview}
    We split the proof of \Cref{thm:main} into three part, according to the
    number of edges $e$. We state the theorems corresponding to these three
    parts here.

    The following theorem deals with $e$ that is close to $n^2 / 4$, i.e.~$e \le
    (1/4 + \delta)n^2$, where $\delta$ is a sufficiently small constant.

    \begin{restatable}{thm}{thmAlmostBipartite} \label{thm:almost-bipartite}

        There exist $n_0$ and $\delta > 0$ such that the following holds. Let
        $G$ be a graph with $n \ge n_0$ vertices and $e$ edges, where $n^2/4 \le e
        \le (1/4 + \delta)n^2$.  Then there is a graph $H = G(a, b, c)$
        that satisfies $|H| = n$, $e(H) \ge e$ and $\t(H) \ge \t(G)$.
        
    \end{restatable}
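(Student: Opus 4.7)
The plan is to construct integers $a,b,c$ with $a+b+c=n$ such that $\binom{a}{2}+b(a+c)\ge e(G)$ and $bc\ge t(G)$; then $H=G(a,b,c)$ has the required properties. Two observations underpin the argument. First, the set $T$ of non-triangular edges of $G$ is triangle-free — a triangle in $T$ would make each of its edges triangular via the third vertex — so by Mantel $t(G)=|T|\le\lfloor n^2/4\rfloor$. Second, writing $l=e(G)-\lfloor n^2/4\rfloor$ with $0\le l\le\delta n^2$, choosing $b=\lfloor n/2\rfloor$ makes $b(n-b)=\lfloor n^2/4\rfloor$, so that the edge condition reduces to $\binom{a}{2}\ge l$; taking $a^*=\lceil(1+\sqrt{1+8l})/2\rceil\approx\sqrt{2l}$ and $c=\lceil n/2\rceil-a^*$ then yields $bc=\lfloor n^2/4\rfloor-a^*\lfloor n/2\rfloor$. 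The theorem reduces to proving $t(G)\le\lfloor n^2/4\rfloor-a^*\lfloor n/2\rfloor$, roughly $t(G)\le n^2/4-n\sqrt{l/2}$.

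I would verify this upper bound in two regimes. If $t(G)\le(1/4-\eta)n^2$ for a fixed small constant $\eta$ with $\sqrt{\delta}\ll\eta$, the bound is immediate, since $a^*\le n\sqrt{2\delta}$ and so $\lfloor n^2/4\rfloor-a^*\lfloor n/2\rfloor\ge(1/4-\eta)n^2\ge t(G)$. In the complementary regime $t(G)>(1/4-\eta)n^2$, the triangle-free graph $T$ is near-extremal, so the Erd\H{o}s--Simonovits stability theorem furnishes a balanced bipartition $V=X\cup Y$ with $|X|,|Y|=n/2\pm O(\sqrt{\eta}\,n)$ such that all but $O(\eta n^2)$ edges of $T$ lie between $X$ and $Y$. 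I would further refine $(X,Y)$ to be a max cut of $G$, so that $\deg_X(v)\le\deg_Y(v)$ for every $v\in X$ and the symmetric inequality on $Y$.

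In this partition, since $e(X,Y)\le|X||Y|\le\lfloor n^2/4\rfloor$ and $e(G)\ge\lfloor n^2/4\rfloor+l$, the parts contain at least $l$ interior edges of $G$. Letting $A'\subseteq X$ and $A''\subseteq Y$ be the sets of vertices incident to at least one such edge, the inequality $\binom{|A'|}{2}+\binom{|A''|}{2}\ge l-O(\eta n^2)$ yields $|A'|+|A''|\ge\sqrt{2l}\ge a^*-O(1)$. For each $v\in A'\cup A''$, the max-cut property gives $\deg_Y(v)\approx n/2$ (respectively $\deg_X(v)\approx n/2$), and any interior neighbour of $v$ serves as a common neighbour forcing most of $v$'s cross-edges to be triangular; hence the number of triangular cross edges is at least $a^*\cdot(n/2)$ minus a controllable error. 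Subtracting from $|X||Y|\le n^2/4$ bounds the non-triangular cross edges, and combining with the stability bound on non-triangular interior edges gives $t(G)\le\lfloor n^2/4\rfloor-a^*\lfloor n/2\rfloor+\text{error}$.

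The hard part will be eliminating this error term to recover the sharp bound — the analogous result of F\"uredi and Maleki carries an additive slack of $3n/2$, and removing it here requires careful handling of vertices whose degree against $(X,Y)$ is atypically high or low, exploiting the max-cut property to prevent double-counting of triangular cross edges through $A'\times A''$, and matching parities between $n$, $a^*$, $|X|$ and $|Y|$ so that integer rounding works in our favour rather than against us.
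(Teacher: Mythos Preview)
Your approach is genuinely different from the paper's, and the gap you yourself identify in the last paragraph is real and, as far as I can see, not closable along the lines you sketch.

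The paper does \emph{not} attempt to prove the inequality $t(G)\le\lfloor n^2/4\rfloor-a^*\lfloor n/2\rfloor$ for an arbitrary $G$ in this edge range. Instead it passes immediately to an \emph{optimal} $G$ (one maximising $t$ subject to $e(G)\ge e$, and simultaneously maximising $e$ subject to $t(G)\ge t$), and then proves that such a $G$ is literally isomorphic to some $G(a,b,c)$. The key leverage is that optimality permits local modifications: if two vertices $u,v$ satisfy $\deg(u)\le\deg(v)-2$ and $\deg_{\text{Non-}\Delta}(u)\le\deg_{\text{Non-}\Delta}(v)-2$, one can replace $u$ by a clone of $v$ and strictly improve both parameters, a contradiction. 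This single observation forces every vertex to have degree close to $n/2$, which in turn forces the stability partition to be essentially the whole graph, with a tiny residual set $A\cup D$ of size $O(\sqrt{\eps}\,n)$. Further local modifications (replacing pairs of vertices by a new vertex in $A$ and a new vertex in $C$, etc.) then kill $D$ and the non-triangular part of $A$ exactly, with no error term, because each such modification strictly increases both $e$ and $t$.

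Your stability-plus-max-cut counting argument, by contrast, applies to every $G$, not just optimal ones, and therefore cannot exploit these modifications. The errors you accumulate are unavoidable at that level of generality: you have no control over low-degree vertices, over the precise sizes of $A'$ and $A''$, or over the overlap of triangular cross-edges through $A'\times A''$. The $O(n)$ slack in F\"uredi--Maleki is exactly the residue of this kind of argument, and the whole point of the present paper is that removing it requires a structural rather than a counting proof. Your final paragraph's list of fixes (careful handling of atypical vertices, parity matching) does not contain an actual mechanism for eliminating the error; in particular, fixing $b=\lfloor n/2\rfloor$ a priori already loses a few edges relative to the true optimum, which alone can cost you the exact bound.
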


    The next theorem considers the case where $e$ is bounded away from $n^2/4$
    and $\binom{n}{2}$, namely $(1/4 + \delta)n^2 \le e \le (1/2 - \delta)n^2$
    for any constant $\delta > 0$.

    \begin{restatable}{thm}{thmMidRange} \label{thm:mid-range}

        For every $\delta > 0$ there exist $n_0$ such that the following holds.
        Let $G$ be a graph with $n \ge n_0$ vertices and $e$ edges, where $(1/4 +
        \delta)n^2 \le e \le (1/2 - \delta)n^2$. Then there is a graph $H = G(a, b, c)$
        that satisfies $|H| = n$, $e(H) \ge e$ and $\t(H) \ge \t(G)$.
 
    \end{restatable}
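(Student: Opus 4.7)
The aim is to find integers $a,b,c$ with $a+b+c=n$, $\binom{a}{2}+b(a+c)\ge e$, and $bc\ge t(G)$, where $t(G)$ counts the non-triangular edges of $G$. The strategy is to read these off a tripartition $(A,B,C)$ of $V(G)$ that mimics the structure of $G(a,b,c)$: $A$ should induce (nearly) a clique, every vertex of $B$ should be adjacent to all of $A\cup C$, and $A$--$C$ edges should be (nearly) absent.

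The starting point is the subgraph $T$ on $V(G)$ consisting of the non-triangular edges of $G$; this graph is triangle-free, since a triangle in $T$ would exhibit a common neighbour for one of its edges, contradicting non-triangularity in $G$. Write $t=|E(T)|$, so by Mantel's theorem $t\le n^2/4$. I would split into two regimes. If $t\le \eta n^2$ for a small $\eta=\eta(\delta)>0$, then a tripartition with one of $b,c$ small should suffice: arranging $bc\ge t$ is easy, while the required edge inequality follows from \Cref{thm:furedi-maleki} combined with the slack afforded by our allowed range of $e$. If instead $t\ge \eta n^2$, the stability version of Mantel's theorem applied to $T$ yields disjoint sets $B_0,C_0$, each of size $\Omega(n)$, such that all but $o(n^2)$ edges of $T$ lie between them.

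Starting from $(A_0,B_0,C_0)$ with $A_0 := V(G)\setminus(B_0\cup C_0)$, I would iteratively re-classify vertices to clean up the tripartition: move a vertex of $B_0$ with few non-triangular neighbours in $C_0$ into $A_0$, move a vertex of $A_0$ with many non-triangular neighbours on one side into the opposite part, and so on symmetrically. The target is a tripartition $(A,B,C)$ matching the intended $G(a,b,c)$ pattern as closely as possible. Each local move can be checked to keep the two target inequalities within reach, and once the tripartition stabilises the resulting counts are compared with $\binom{a}{2}+b(a+c)$ and $bc$ directly, yielding the required integers $a,b,c$.

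The main obstacle is bridging the gap between the $o(n^2)$ error inherent to stability arguments and the exact inequalities required. \Cref{thm:furedi-maleki} supplies only an $O(n)$ error term, which must be absorbed through fine-grained analysis rather than any black-box stability result. A careful defect/potential argument tracking the effect of each vertex swap on $e(G)$, $t(G)$, and the part sizes simultaneously is likely to be needed, together with a delicate treatment of the boundary regime where $t$ is comparable to $\eta n^2$ and neither sub-argument gives much room. Given that this is the middle range, where both $T$ and its complement in $G$ are substantial, I expect this to be the longest and most intricate of the three subcases.
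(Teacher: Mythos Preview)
Your proposal has a genuine gap at its core. The claim that ``if $t \ge \eta n^2$, the stability version of Mantel's theorem applied to $T$ yields disjoint sets $B_0, C_0$ \ldots'' is not correct: stability for Mantel's theorem requires $e(T)$ to be close to the extremal value $n^2/4$, not merely $\Omega(n^2)$. A balanced blow-up of $C_5$ is triangle-free with $n^2/5$ edges and is $\Omega(n^2)$-far from bipartite, so no such conclusion is available from $t \ge \eta n^2$ alone. This matters in the middle range: when $e$ is near $(1/2-\delta)n^2$, the optimal value of $t(G)$ is only of order $\delta^{3/2} n^2$, nowhere near $n^2/4$. Your dichotomy therefore leaves a wide band of values of $t$ (say $\eta n^2 \le t \le (1/4 - \Omega(1))n^2$) for which neither branch applies. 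The ``iterative re-classification'' you describe is also too schematic to evaluate; you yourself flag that you do not see how to close the $o(n^2)$ gap, and indeed nothing in the outline produces the exact inequalities.

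The paper proceeds along an entirely different line. It first replaces $G$ by a graph that is both \emph{optimal} and \emph{compressed} (a structural reduction proved separately), and then shows, via the Motzkin--Straus/F\"uredi--Maleki weighted-graph machinery together with an ``exchange lemma'' (which lets one trade a controlled number of edges for non-triangular edges and vice versa), that $G$ contains $\Omega(n)$ \emph{triangular} vertices, i.e.\ vertices incident to no non-triangular edge. The set $A$ is then \emph{defined} to be this set of triangular vertices; compressedness forces $A$ to be a clique with a common outside neighbourhood $B$, and $C$ is the rest. From there, repeated applications of the exchange lemma and the clone structure of compressed graphs pin down $e(G)$ and $t(G)$ to within $O(n^{7/4}\sqrt{\log n})$ of the values for $G(|A|,|B|,|C|)$, and a final sequence of local modifications (using optimality) shows $G \cong G(|A|,|B|,|C|)$ exactly. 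The point is that the partition is not extracted from the non-triangular subgraph $T$ at all; it comes from identifying the large clique first.
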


    Finally, we consider the remaining case, when $e$ is close to
    $\binom{n}{2}$, i.e.~$e \ge (1/2 - \delta)n^2$ for $\delta$ sufficiently
    small.

    \begin{restatable}{thm}{thmAlmostComplete} \label{thm:almost-complete}

        There exist $n_0$ and $\delta > 0$ such that the following holds.  Let
        $G$ be a graph with $n \ge n_0$ vertices and $e$ edges, where $e \ge (1/2
        - \delta)n^2$. Then there is a graph $H = G(a, b, c)$
        that satisfies $|H| = n$, $e(H) \ge e$ and $\t(H) \ge \t(G)$.

    \end{restatable}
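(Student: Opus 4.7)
Set $m := \binom{n}{2} - e(G)$, so $m \leq \delta n^2$; the statement is equivalent to finding non-negative integers $a, b, c$ with $a + b + c = n$, $\binom{b}{2} + \binom{c}{2} + ac \leq m$, and $bc \geq \t(G)$. Let $M(m) := \max\{bc : a+b+c=n,\ \binom{b}{2} + \binom{c}{2} + ac \leq m\}$.

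The key observation is that every non-triangular edge $uv \in E(G)$ satisfies $N_{\overline{G}}(u) \cup N_{\overline{G}}(v) = V \setminus \{u, v\}$, hence $d_{\overline{G}}(u) + d_{\overline{G}}(v) \geq n - 2$. Consequently each non-triangular edge has at least one endpoint in $L := \{v : d_{\overline{G}}(v) \geq (n-2)/2\}$, and the degree-sum identity $\sum_v d_{\overline{G}}(v) = 2m$ yields $|L| \leq 4m/(n-2) = O(m/n)$. Since $m \leq \delta n^2$ with $\delta$ small, $\overline{G}$ is sparse and most vertices of $G$ have very high $G$-degree.

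My plan is to split into two cases. First, if $\t(G) \leq M(m)$, we simply take $H = G(a, b, c)$ achieving $M(m)$. Otherwise, \Cref{thm:furedi-maleki} gives $M(m) < \t(G) \leq M(m) + 3n/2$, so $G$ lies in a near-extremal regime. The extremal graphs $G(a^*, b^*, c^*)$ in this dense regime have $c^* = \Theta(m/n)$ and $b^* = \Theta(\sqrt{m})$, with their $C$-part coinciding with $L$. Accordingly I would define $C \subseteq L$ (most naturally $C := L$), $B$ as the set of vertices in $V \setminus C$ adjacent in $G$ to every vertex of $C$, and $A := V \setminus (B \cup C)$; then set $c := |C|$, $b := |B|$, $a := |A|$, and verify the two required inequalities by comparing $G$ with $G(a, b, c)$ edge by edge, using the structural rigidity imposed by near-extremality.

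The main obstacle is closing the $3n/2$ slack in \Cref{thm:furedi-maleki}. This calls for a quantitative stability argument: every discrepancy between $G$ and the candidate $H = G(a, b, c)$ must contribute either to $m - \binom{b}{2} - \binom{c}{2} - ac$ (slack in the first inequality) or to $bc - \t(G)$ (slack in the second), but a carefully chosen accounting should show that such discrepancies cannot simultaneously be unfavourable to both. The delicate bookkeeping of these cancellations—ensuring in particular that the integer constraints on $a, b, c$ do not cost us more than they provide—is the technical heart of the proof.
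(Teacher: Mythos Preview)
Your initial observation is correct and useful: every non-triangular edge has an endpoint in $L = \{v : d_{\overline{G}}(v) \ge (n-2)/2\}$, and $|L| = O(m/n)$. Your case split is also accurate in identifying where the content lies: the case $\t(G) \le M(m)$ is trivial, and the case $\t(G) > M(m)$ is precisely the statement of the theorem. But this means your ``Case 2'' is the entire theorem, and you have not proved it---you have only named it (``a quantitative stability argument'') and described a hoped-for cancellation mechanism.

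There are two concrete problems. First, your definition of $B$ as the set of vertices adjacent to \emph{every} vertex of $C$ is too rigid for a stability argument: in a graph that is close to $G(a^*,b^*,c^*)$ but not equal to it, a single missing $B$--$C$ edge pushes that vertex out of your $B$ and into your $A$, so $|B|$ can collapse even when $G$ is $O(1)$ edges away from extremal. The paper instead takes $C$ to be the low-degree vertices and $B$ to be the vertices outside $C$ that have a \emph{non-triangular} neighbour in $C$; this is robust under small perturbations. Second, and more fundamentally, you never invoke optimality of $G$. The paper reduces to optimal $G$ and then uses the two-vertex swapping argument (\Cref{obs:two-vs}) repeatedly: this is what forces every vertex of $C$ to have degree at least $|B| - O(\eps n)$, what forces $C$ and then $B$ to be independent (by replacing a bad adjacent pair with a clone of an $A$-vertex and a $B'$-vertex and gaining in both $e$ and $\t$), and ultimately what gives $G \cong G(|A|,|B|,|C|)$ exactly. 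Your ``edge-by-edge bookkeeping'' has no substitute for this leverage, and without it there is no mechanism to close the $3n/2$ gap.
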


    We now try to give some insight into our proofs.
    The rough plan in the proof of each of the theorem is the same. Assuming
    that $G$ is a graph on $n$ vertices and at least $e$ edges, that maximises
    the number of non-triangular edges, we first try to obtain information about
    the rough structure of the graph. In each of the cases, we find a partition
    of the vertices $\{A, B, C\}$, where the three parts relate to the three
    parts in a graph $G(a, b, c)$, in a way that will be explained in 
    the proofs.
    In the next stage we use
    lower bounds on the number of non-triangular edges (coming from examples
    $G(a, b, c)$) to estimate the size of the sets $A, B, C$. 
    The final stage uses the estimates on the sizes to conclude that $G$ has the
    required structure, namely, it is isomorphic to the graph $G(|A|, |B|, |C|)$.

    The proofs of the two extremal cases, where $e$ is close to either $n^2/4$
    or $\binom{n}{2}$, are considerably easier than the middle range. The main
    reason for that is that in the extremal cases, it is fairly easy to show that the graph
    $G$ should be close to a graph $G(a, b, c)$, whereas in the middle range,
    getting any handle on the structure of the graph is hard, and the
    initial structural properties that we find are less restrictive than in the two
    extremal cases.
    
    To help us with the proof of the middle range, we introduce two tools. The
    first one is a process of `compression' that allows us to `simplify' a graph
    without decreasing the number of edges or non-triangular edges. The second
    is the `exchange lemma', that allows us to `exchange' edges by
    non-triangular edges (and vice versa), i.e.~by moderately reducing the
    number of edges, we can increase the number of non-triangular edges by a
    given amount. Both these tools will be presented and explained in
    \Cref{sec:tools}.

    \remark{should we call this tool continuity??}

\section{Tools} \label{sec:tools}

    In this section we introduces the tools that will be used in the
    paper.  We start by describing some notation and simple definitions in
    Subsection \ref{subsec:notation}.  We introduce the notion of weighted
    graphs in Subsection \ref{subsec:weighted-graphs} and list some results by
    F\"uredi and Maleki \cite{furedi-maleki} that involve weighted graphs. An
    important tool in the proof of the middle range is the so-called Exchange
    Lemma, \Cref{lem:exchange}. We prove \Cref{lem:exchange} and explain its
    importance in Subsection \ref{subsec:exchange-lemma}.  Our last tool is the
    notion of \emph{compressed graphs} which is a class of graphs with somewhat
    restrictive structure. In Subsection \ref{subsec:compressed-graphs}, we give
    our definition of a compressed graph in and prove \Cref{lem:granulation},
    that shows that it suffices to prove \Cref{thm:main} for compressed graphs.

    \subsection{Notation} \label{subsec:notation}
    
        The following notation is standard. Write $|G|$ for the order of a
        graph $G$ and $e(G)$ for the number of edges in $G$. We denote the
        degree of $u$ in $G$ by $\deg_G(u)$, or $\deg(u)$ if $G$ is clear from
        the context.  Given a set $U$ of vertices of $G$, we denote by $G[U]$
        the graph induced by $G$ on $U$.

        We now turn to notation that is more specific to our context.
        An edge $e \in E(G)$ is called \emph{triangular} if it is contained in a
        triangle of $G$. Similarly, we say that $e$ is \emph{non-triangular} if
        it is not contained in a triangle. We denote by $\t(G)$ the number of
        non-triangular edges of $G$.

        Given a vertex $u$, a vertex $v$ is a \emph{triangular neighbour} of
        $u$, if $uv$ is a triangular edge. Similarly, the \emph{triangular
        neighbourhood} of $u$ is the set of triangular neighbours of $u$, and
        the \emph{triangular degree} of $u$ is the number of triangular edges
        adjacent to $u$.  The notions of a \emph{non-triangular neighbour},
        \emph{non-triangular neighbourhood} and \emph{non-triangular degree} can 
        defined similarly.  We denote the non-triangular degree of $u$ in $G$ by
        $\degNonTriangular(u)$.  A vertex $u$ is called \emph{\triangularVs{}}
        if $\degNonTriangular(u) = 0$, i.e., if all edges adjacent to $u$ are
        triangular. 
        
        We say that a set of vertices $U \subseteq V(G)$ is a \emph{set of
        clones} if any two vertices in $U$ have the same neighbourhood in $G$.
        In particular, a set of clones is independent.  For example, in $G(a, b,
        c)$ the sets $B$ and $C$ are sets of clones. We remark that the notion
        of clones will play an important role in the definition of a compressed
        graph (which will be defined in Subsection
        \ref{subsec:compressed-graphs}).

        We now introduce the natural notion of an optimal graph.

        \begin{defn} \label{def:optimal}
            A graph $G$ on $n$ vertices is called \emph{optimal} if there is
            no graph $H$ on $n$ vertices such that either $\t(H) > \t(G)$ and
            $e(H) \ge e(G)$ or $e(H) > e(G)$ and $\t(H) \ge \t(G)$. 
            
            In other
            words, $G$ is optimal if it maximises $\t(G)$ among graphs with $n$
            vertices and at least $e(G)$ edges and, in addition, it maximises
            $e(G)$ among graphs with $n$ vertices and at least $\t(G)$
            non-triangular edges.         
        \end{defn}
        
        It clearly suffices to prove the main result, \Cref{thm:main}, for
        optimal graphs.
        The following observation is a simple property of optimal graphs. 
        
        \begin{obs} \label{obs:two-vs}
            Let $G$ be an optimal graph and let $u$ and $v$ be vertices in $G$.
            Then either
            $\deg(u) \ge \deg(v) - 1$ or $\degNonTriangular(u) \ge
            \degNonTriangular(v) - 1$.
        \end{obs}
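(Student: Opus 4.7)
The plan is to argue by contradiction: suppose both $\deg(u) \le \deg(v) - 2$ and $\degNonTriangular(u) \le \degNonTriangular(v) - 2$ hold. I then construct a graph $H$ on $n$ vertices with strictly more edges \emph{and} strictly more non-triangular edges than $G$, violating the optimality of $G$.

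The construction is a ``clone-and-delete'': let $H$ be obtained from $G$ by deleting $u$ and inserting a new vertex $u'$ with $N_H(u') = N_G(v) \setminus \{u\}$, so that $u'$ becomes a twin of $v$ in $H$ (with $u' v \notin E(H)$). The edge count is immediate:
\[
    e(H) - e(G) \;=\; |N_G(v) \setminus \{u\}| - \deg(u) \;\ge\; \deg(v) - 1 - \deg(u) \;\ge\; 1.
\]

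For the non-triangular edge count I would establish two facts. First, any edge $xy$ of $G$ with $x, y \ne u$ that is non-triangular in $G$ remains non-triangular in $H$: any new triangle on $xy$ must pass through $u'$, which forces $x, y \in N_G(v)$ and $xy \in E(G)$, making $vxy$ a triangle already present in $G$ — contradiction. Second, for each non-triangular neighbour $w \ne u$ of $v$ in $G$, the edge $u' w$ is non-triangular in $H$: any common neighbour of $u'$ and $w$ in $H$ would lie in $N_G(v) \cap N_G(w)$, which is empty because $vw$ is non-triangular in $G$. Since there are at least $\degNonTriangular(v) - 1$ such neighbours $w$, combining the two facts gives
\[
    \t(H) \;\ge\; \bigl( \t(G) - \degNonTriangular(u) \bigr) + \bigl( \degNonTriangular(v) - 1 \bigr) \;\ge\; \t(G) + 1,
\]
which is the desired contradiction.

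The argument is short and has no real obstacle. The only subtlety is bookkeeping the ``$-1$'' corrections arising from the possibility that $uv \in E(G)$ (in which case cloning loses the edge $uv$) and from the possibility that $u$ is itself a non-triangular neighbour of $v$ (in which case we lose one potential non-triangular edge at $u'$). The hypothesis that the two gaps are each at least $2$ provides exactly the slack needed to absorb both losses and still strictly improve on $G$, which is also the reason the observation allows a shortfall of $1$ in each alternative rather than requiring equality.
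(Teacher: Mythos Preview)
Your proof is correct and follows essentially the same approach as the paper: both argue by contradiction and replace $u$ by a clone of $v$ to strictly increase both $e$ and $\t$. Your write-up is in fact more careful than the paper's, which simply asserts ``similarly, $\t(G') > \t(G)$'' without spelling out why the old non-triangular edges survive and why the new edges at the clone are non-triangular.
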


        \begin{proof}
            Suppose that $\deg(u) \le \deg(v) - 2$ and $\degNonTriangular(u) \le
            \degNonTriangular(v) - 2$. Consider the graph $G'$ obtained by removing
            the edges adjacent to $u$ and adding the edges between $u$ and the
            neighbours of $v$. Then $e(G') \ge e(G) - \deg(u) + \deg(v) - 1 > e(G)$
            and, similarly, $\t(G') > \t(G)$, contradicting the assumption that
            $G$ is optimal.
        \end{proof}

        We shall use big-O notation extensively throughout this paper, so, for
        the sake of clarity, we briefly explain how we interpret the symbols $O$,
        $o$ and $\Omega$.
        We write $f(n) = O(g(n))$ if there exists an absolute constant $C > 0$
        such that $|f(n)| \le C|g(n)|$. In particular, the expression $f(n) =
        g(n) + O(h(n))$ consists of the following inequalities: $g(n) - C|h(n)| \le f(n)
        \le g(n) + C|h(n)|$. Similarly, $f(n) = o(g(n))$ means that
        $\lim_{n \rightarrow \infty} |f(n) / g(n)| = 0$. Finally, we write $f(n)
        = \Omega(g(n))$ if $f(n) \ge C \cdot g(n)$ for an absolute constant $C > 0$.

        Throughout this paper, we omit integer parts whenever they do not affect
        the argument.
        We always assume that $n$ is large.

    \subsection{Weighted graphs} \label{subsec:weighted-graphs}

        \remark{I added some definitions here: the notion of a `weighted
            subgraphs' which is useful for a few statements, and the
        generalisation of degree to weighted graphs}

        Our most basic tool is the concept of a \emph{weighted graph}, which is
        a graph whose vertices have non-negative weights. The \emph{total
        weight} of a weighted graph $G$ is the sum of the weights of its
        vertices and is denoted by $|G|$. Throughout this paper we require that the
        number of vertices of a weighted graph does not exceed its total weight.
        Equivalently, we require that the average weight of a vertex in a weighted
        graph is at least $1$.

        Given weighted graphs $G$ and $H$ we say that $H$ is a \emph{weighted
        subgraph} of $G$ if, as graphs, $H$ is a subgraph of $G$. Note that this
        definition does not impose any conditions on the weight function of $H$. In
        particular, if $H$ is a weighted subgraph of $G$ then the weight in $H$ of a
        vertex in $H$ may be larger than its weight in $G$.

        Given a weighted graph $G$ with weight function $w : V(G) \to \nonNegReals$
        we define $e(G)$ to be the sum of $w(u)w(v)$ over all edges $uv$ of $G$.
        Similarly, we define $\t(G)$ to be the same sum over the
        \emph{non-triangular} edges of $G$. Note that any graph $G$ can be seen
        either as a graph or as a weighted graph whose every vertex has weight $1$,
        and the definitions of $|G|, e(G)$ and $\t(G)$ do not depend on the point of
        view. 
        
        The notions of the degree and the non-triangular degree of a
        vertex may be similarly generalised to weighted graphs. For instance, the
        degree of $u$ in a weighted graph $G$ is the sum
        of weights of the neighbours of $u$.
        We use the notation $\deg(u)$ and
        $\degNonTriangular(u)$ for the degree and the non-triangular degree of a
        vertex $u$ in a weighted graph.
        \remark{do we need to include $G$ in the notation here? Do we ever use
        this?}

        We now define a \emph{\good{} weighted graph} (see also
        \Cref{fig:good-weighted}). 

        \begin{defn} \label{def:good}
            We call a weighted graph $G$ \good{} if its vertex set can be
            partitioned into a set $K$ that induces a clique, and a pair $(u,
            v)$ of adjacent vertices, such that $uv$ is the only non-triangular
            edge in $G$.
        \end{defn}

        \begin{figure}[h]\centering
            \includegraphics[scale=1]{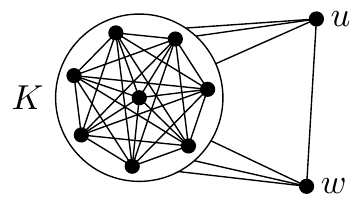}
            \caption{a \good{} weighted graph}
            \label{fig:good-weighted}
        \end{figure} 

        A \good{} weighted graph can be viewed as a
        weighted analogue of a graph $G(a, b, c)$ (see \Cref{fig:good-graph}).
        Indeed, a graph $G(a, b, c)$ can be represented by a \good{} weighted
        graph, by replacing the
        independent sets (of sizes $b$ and $c$) by vertices of weight $b$ and
        $c$ respectively. However, a \good{} weighted graph may have non-integer
        weights.
        We remark that we shall often use this correspondence
        between an independent set of clones $I$  and a vertex of weight $|I|$ with
        the same neighbourhood.
        \remark{again, can the good notion be avoided?}

        Motzkin and Straus \cite{motzkin-straus} used weighted graphs to give an
        alternative proof of Tur\'an's theorem \cite{turan}. They observed that
        Tur\'an's theorem for weighted graphs is very easy: given a weighted
        graph $G$, there exists a weighted graph $H$ that satisfies $|H| = |G|$
        and $e(H) \ge e(G)$, and, as a graph, is a complete subgraph of $G$.
        Therefore, among $K_{r+1}$-free weighted graphs with total weight
        $\alpha \ge r$, $e(G)$ is maximised when $G$ is a complete graph with
        $r$ vertices whose every vertex has weight $\alpha / r$.  If $\alpha /
        r$ is an integer, then this corresponds to a complete $r$-partite graph,
        implying Tur\'an's theorem. However, if $\alpha / r$ is not an integer,
        then this argument gives only an approximate form of Tur\'an's theorem,
        and Motzkin and Straus needed an additional argument to recover the full
        theorem.

        F\"uredi and Maleki \cite{furedi-maleki} modified this result to also
        give $t(H) \ge t(G)$ at the cost of making the structure of $H$ more
        complicated (here we use \Cref{def:good} of a \good{} weighted graph,
        see \Cref{fig:good-weighted}).
        
        \begin{lem}[F\"uredi and Maleki \cite{furedi-maleki}]
        \label{lem:approx-by-good-weighted-graph}
            Let $G$ be a weighted graph with $\t(G) > 0$. Then there exists a
            \good{} weighted subgraph $H$ of $G$ that satisfies $|H| = |G|$, $e(H)
            \ge e(G)$ and $\t(H) \ge \t(G)$.
        \end{lem}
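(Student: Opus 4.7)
The plan is to use a Motzkin--Straus style weight-shifting argument, modified to keep track of both $e(H)$ and $\t(H)$. The basic move is as follows: if $x, y$ are positive-weight non-adjacent vertices of a weighted graph $H$, then transferring weight $\delta$ from $y$ to $x$ preserves $|H|$ and the underlying graph, and changes $e(H)$ by $\delta(\deg_H(x) - \deg_H(y))$ and $\t(H)$ by $\delta(\degNonTriangular(x) - \degNonTriangular(y))$. Both changes are linear in $\delta$, which is what makes the shifting tractable, and, because we do not alter the graph structure, the triangularity status of each edge is unaffected.

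Let $H$ be a weighted subgraph of $G$ satisfying $|H| = |G|$, $e(H) \ge e(G)$, $\t(H) \ge \t(G)$, chosen to lexicographically maximise the pair $(e(H), \t(H))$ and then minimise the number of positive-weight vertices. Since $\t(H) > 0$, there is a non-triangular edge $uv$ in $H$ with positive weights, and writing $A = N_H(u) \setminus \{v\}$, $B = N_H(v) \setminus \{u\}$, these are disjoint because $uv$ is non-triangular. I would now verify successively that (a) every positive-weight vertex of $H$ lies in $\{u, v\} \cup A \cup B$; (b) $A \cup B$ induces a clique in $H$; and (c) $uv$ is the only non-triangular edge of $H$. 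Given (a)--(c), setting $K = A \cup B$ realises $H$ as a \good{} weighted graph and completes the proof.

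For (a), any positive-weight vertex $x \notin \{u,v\}\cup A\cup B$ is non-adjacent to both $u$ and $v$, so shifting its weight onto whichever of $u, v$ is more favourable will either shrink the support or strictly improve $e$ or $\t$, contradicting the extremal choice of $H$. For (b), a non-edge $xy$ inside $A \cup B$ admits a similar shifting argument, this time also comparing against the fixed vertices $u$ and $v$. Once (a) and (b) are in hand, claim (c) is essentially a structural check: edges inside $A$ triangulate through $u$, edges inside $B$ through $v$, edges between $A$ and $B$ through the clique structure on $A \cup B$, and edges $u$--$A$ (resp.\ $v$--$B$) through any second vertex of $A$ (resp.\ $B$); the degenerate cases $|A| \le 1$ or $|B| \le 1$ are handled by hand.

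The main obstacle is the shifts required for (a) and (b): when $\deg_H(x) - \deg_H(y)$ and $\degNonTriangular(x) - \degNonTriangular(y)$ have opposite signs, no single direction of pairwise shift can simultaneously improve (or even preserve) both $e(H)$ and $\t(H)$ while making progress in the extremisation. The cleanest way around this is to exploit the three-tier extremality of $H$: any admissible pairwise shift must have $\Delta e = 0$ or $\Delta \t = 0$, which forces rigid equalities between the relevant weighted degrees and non-triangular degrees and eventually pins down the structure. In genuinely tight cases one may need to perform a compound three-vertex shift that moves weight simultaneously among $x, u$ and $v$ in order to realise a net improvement that no pairwise shift can.
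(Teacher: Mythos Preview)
Your high-level strategy---take an extremal $H$, fix a non-triangular edge $uv$, and shift weights to force the remaining support into a clique---is close in spirit to F\"uredi--Maleki's, but the proposal is a sketch rather than a proof, and the gap you yourself flag is real. The paper does not actually prove this lemma: it records only the key device (the following \Cref{lem:process-no-empty-triangle}), namely that for an \emph{independent} triple $\{u_1,u_2,u_3\}$ one can always find shifts $s_1+s_2+s_3=0$ with $\sum s_i\deg(u_i)\ge 0$ and $\sum s_i\degNonTriangular(u_i)\ge 0$, because two homogeneous half-spaces in a two-dimensional subspace always meet nontrivially. Iterating this kills all independent triples; a separate structural analysis (which the paper explicitly omits, deferring to \cite{furedi-maleki}) then reaches a good weighted graph.

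Your route tries to bypass that second stage by anchoring everything to the fixed edge $uv$, but the price is that your ``compound three-vertex shift'' is over $\{x,u,v\}$ with $uv$ an \emph{edge}. Both $\Delta e$ and $\Delta \t$ then acquire the same quadratic correction $s_u s_v$, and the clean half-space argument no longer applies. Concretely, if $\deg(x)>\deg(u),\deg(v)$ while $\degNonTriangular(x)<\degNonTriangular(u),\degNonTriangular(v)$, no infinitesimal shift between $x$ and $\{u,v\}$ weakly improves both quantities; your lexicographic extremality then only pins $\t(H)=\t(G)$ without yielding further structure, and a large shift that annihilates $u$ or $v$ can genuinely drop $\t$ below $\t(G)$ (you lose the $w_uw_v$ contribution of $uv$ itself). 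Nothing in the proposal rules this configuration out. To complete your argument you would need either to show that an extremal $H$ cannot sit in such a configuration, or to locate a genuine independent triple elsewhere and invoke \Cref{lem:process-no-empty-triangle}---but the latter is essentially the F\"uredi--Maleki reduction you were trying to shortcut, and you would still owe the ``careful analysis'' that turns independence number at most~$2$ into a good weighted graph.
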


        \remark{we used to have a longer description of the structure of $H$, but it
        was just the description of a good weighted graph. Also, I used the notion
        of `weighted subgraph' which was defined at the beginning of the subsection}

        We will use both this result and the key observation that leads to its
        proof. We state and prove this observation next, but we do not present
        the careful analysis that the aforementioned authors perform to complete
        the proof of \Cref{lem:approx-by-good-weighted-graph}.

        \begin{lem}[F\"uredi and Maleki \cite{furedi-maleki}]
        \label{lem:process-no-empty-triangle}
            Let $G$ be a weighted graph and suppose that $I$ is an independent set
            of three vertices. Then there exists a weighted graph $H$, which can
            be obtained from $G$ by removing one of the vertices in $I$ and changing
            the weights of the other two vertices in $I$, such that $|H| = |G|$,
            $e(H) \ge e(G)$ and $\t(H) \ge \t(G)$.
        \end{lem}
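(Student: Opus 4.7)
Let $x, y, z$ be the vertices of $I$, with weights $a = w(x), b = w(y), c = w(z)$, which we may assume are all positive (otherwise the conclusion is immediate by taking $v$ to be the zero-weight vertex and leaving the other weights unchanged). For each $v \in \{x, y, z\}$, write $A_v$ for the sum of the weights of the neighbours of $v$ (all of which lie outside $I$, since $I$ is independent) and $A'_v \le A_v$ for the analogous sum taken only over non-triangular neighbours. Since $I$ contains no edge, the contributions of $x, y, z$ to $e(G)$ and to $\t(G)$ can be written as $\sum_{v \in I} w(v) A_v$ and $\sum_{v \in I} w(v) A'_v$ respectively.

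Consider the $2$-simplex of possible reweightings
\[
    \Sigma = \{(w'_x, w'_y, w'_z) : w'_x, w'_y, w'_z \ge 0,\ w'_x + w'_y + w'_z = a + b + c\},
\]
whose interior contains the original weight vector $p_0 = (a, b, c)$. If we only change the weights of $x, y, z$ to some $w \in \Sigma$ and leave the vertex set and the edges untouched, then the resulting values of $e$ and $\t$ are \emph{linear} functions of $w$ that agree at $p_0$ with $e(G)$ and $\t(G)$. The increments $\delta e(w)$ and $\delta t(w)$ from $p_0$ are therefore two linear functions that both vanish at $p_0$.

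The region $R = \{\delta e \ge 0\} \cap \{\delta t \ge 0\}$ is an intersection of two closed half-planes in $\Sigma$ whose boundary lines both pass through $p_0$, so $R$ contains at least a full ray emanating from $p_0$. Following such a ray outward until it meets $\partial\Sigma$ produces a point $w^*$ at which some coordinate $w^*_v$ is $0$. Take $H$ to be the graph obtained from $G$ by deleting that vertex $v$ and assigning the weights $w^*_y, w^*_z$ (or the appropriate permutation) to the two remaining vertices of $I$. Then $|H| = |G|$ and $e(H) - e(G) = \delta e(w^*) \ge 0$. Moreover, deleting a vertex cannot create new triangles, so the deletion of $v$ can only convert previously triangular edges of $G$ into non-triangular edges of $H$; hence $\t(H) \ge \t(G) + \delta t(w^*) \ge \t(G)$.

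The main conceptual obstacle, which defeats the naive greedy attempt to merely shift the removed vertex's weight onto one of the other two, is that one must allow redistribution of the full combined weight $a + b + c$ of $I$. Once that is granted, the lemma reduces to the elementary geometric observation that two closed half-planes sharing a common boundary point always intersect in strictly more than that point alone, together with the bonus that actually removing the chosen vertex can only help $\t$.
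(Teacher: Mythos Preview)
Your proof is correct and is essentially the same argument as the paper's, just phrased geometrically: the paper finds a nonzero direction $(s_1,s_2,s_3)$ with $s_1+s_2+s_3=0$ along which both linear forms $\sum s_i d_i$ and $\sum s_i t_i$ are nonnegative, then pushes in that direction until some weight hits zero---exactly your ``ray from $p_0$ in $R$ to $\partial\Sigma$''. Your explicit remark that deleting the zero-weight vertex can only turn triangular edges into non-triangular ones (and hence only helps $\t$) is a detail the paper leaves implicit.
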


        \begin{proof}
            Denote $I = \{u_1, u_2, u_3\}$, $d_i = \deg(u_i)$ and $t_i =
            \degNonTriangular(u_i)$. It is not hard to see that there exist $s_1,
            s_2, s_3$, not all $0$, such that $s_1 d_1 + s_2 d_2 + s_3 d_3 \ge 0$,
            $s_1 t_1 + s_2 t_2 + s_3 t_3 \ge 0$ and $s_1 + s_2 + s_3 = 0$. Denote by
            $G_\lambda$ the weighted graph obtained by adding $\lambda s_i$ to the
            weight $w(u_i)$ of $u_i$ for each $i \in [3]$; this definition is valid for the
            values of $\lambda$ for which $w(u_i) + \lambda s_i \ge 0$ for $i
            \in [3]$. Pick 
            $\lambda > 0$ such that $w(u_i) + \lambda s_i \ge 0$ 
            for $i \in [3]$ with equality for at least one value, say $1$.
            Then
            $|G_\lambda| = |G|$, $e(G_\lambda) \ge e(G)$ and $t(G_\lambda) \ge
            e(G)$, so the weighted graph $H = G_\lambda \setminus \{u_1\}$ satisfies
            the requirements of the lemma.
        \end{proof}

        F\"uredi and Maleki deduce their main result from
        \Cref{lem:approx-by-good-weighted-graph}. We present their theorem with
        minor modifications, which make it more suitable for our application.

        \begin{cor}[F\"uredi and Maleki \cite{furedi-maleki}]
        \label{cor:approx-by-integer-example}
            Let $G$ be a weighted graph $G$ with $|G| = n$.  Then there exists a
            graph $H = G(a, b, c)$ satisfying $|H| = n$, $e(H) \ge e(G)$
            and $\t(H) \ge \t(G) - 5n$.
        \end{cor}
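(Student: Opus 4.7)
The plan is to apply \Cref{lem:approx-by-good-weighted-graph} to reduce $G$ to a good weighted graph $H'$, and then to round $H'$ to an integer graph of the form $G(a, b, c)$, using the $5n$ slack in the $t$-condition to absorb the rounding error.

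First, \Cref{lem:approx-by-good-weighted-graph} furnishes a good weighted graph $H'$ with $|H'| = n$, $e(H') \ge e(G)$, and $\t(H') \ge \t(G)$. (The case $\t(G) = 0$ is handled trivially by taking $H = K_n = G(n, 0, 0)$.) Write $H'$ as a clique $K$ together with two adjacent vertices $u, v$ whose edge $uv$ is the unique non-triangular edge, and set $\beta = w(u)$, $\gamma = w(v)$, $\alpha = n - \beta - \gamma$, assuming WLOG that $\beta \ge \gamma$. Combining the bound $\sum_{k < k' \in K} w(k) w(k') \le \binom{\alpha}{2}$ (valid because the average weight in $K$ is at least $1$, by Cauchy--Schwarz) with the disjointness of the neighbourhoods of $u$ and $v$ in $K$ (which holds because $uv$ is non-triangular) yields the inequality
\[
    e(H') \le \binom{\alpha}{2} + \beta(n - \beta).
\]

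The rounding splits into two cases based on the size of $\beta\gamma$. If $\beta\gamma \le 5n$, then the requirement $\t(H) \ge \t(G) - 5n$ is vacuous, and I would take $c = 0$, $b = \lfloor \beta \rfloor$, $a = n - b$. A direct calculation using the identity $\binom{n - b}{2} + b(n - b) = \binom{n}{2} - \binom{b}{2}$, the expansion $\binom{\alpha + \beta + \gamma}{2} = \binom{\alpha}{2} + \binom{\beta}{2} + \binom{\gamma}{2} + \alpha\beta + \alpha\gamma + \beta\gamma$, and $\binom{b}{2} \le \binom{\beta}{2}$ (since $b \le \beta$) shows that $e(G(a, b, 0)) \ge \binom{\alpha}{2} + \beta(n - \beta) \ge e(H')$, as required. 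If instead $\beta\gamma > 5n$, then $\gamma \ge 5$, and I would choose $b \in \{\lfloor \beta \rfloor, \lceil \beta \rceil\}$ to be the integer closer to $n/2$ (so that $b(n - b) \ge \beta(n - \beta) - \tfrac{1}{4}$ by concavity of $x \mapsto x(n - x)$), and then take $a$ to be the largest integer with $b(n - a - b) \ge \t(G) - 5n$ (setting $c = n - a - b$). The $5n$ slack then allows $a$ to be chosen strictly larger than $\lceil \alpha \rceil$, so that $\binom{a}{2}$ exceeds $\binom{\alpha}{2}$ by more than the $\tfrac{1}{4}$ concavity loss in $b(n - b)$, yielding $e(G(a, b, c)) = \binom{a}{2} + b(n-b) \ge \binom{\alpha}{2} + \beta(n - \beta) \ge e(H')$, while the $\t$-condition holds by construction.

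The main technical work lies in verifying the rounding inequalities carefully, particularly in the boundary regimes where one of $\alpha, \beta, \gamma$ is small (for instance, when $\alpha = 0$ and so $H'$ reduces to the weighted edge $uv$). The argument is more fiddly than difficult, and introduces no new ideas beyond the displayed bound on $e(H')$ and the exploitation of the $5n$ slack.
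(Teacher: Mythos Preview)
Your approach is essentially the paper's: reduce to a good weighted graph via \Cref{lem:approx-by-good-weighted-graph}, read off parameters $\alpha,\beta,\gamma$, and round to integers using the $5n$ slack. The paper likewise rounds $\beta$ toward $n/2$ and takes $a$ a couple of units above $\alpha$ (explicitly $a=\lceil\alpha\rceil+2$), which is what your implicit choice of $a$ in Case~2 amounts to.

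There is, however, a genuine error in your displayed bound. You claim
\[
\sum_{k<k'\in K} w(k)w(k') \le \binom{\alpha}{2}
\]
on the grounds that the average weight in $K$ is at least $1$. But the paper's convention only guarantees average weight $\ge 1$ for the \emph{entire} weighted graph $H'$, not for the subset $K$; nothing in \Cref{lem:approx-by-good-weighted-graph} prevents $|K|>\alpha$. Only the weaker bound $\alpha^2/2$ is justified, and the paper uses exactly that. This breaks your Case~1 as written: take $n=100$, $|K|=98$ with each clique vertex of weight $49.99/98$, $\beta=50$, $\gamma=0.01$, and all of $K$ adjacent to $u$. Then $\beta\gamma=0.5\le 5n$, yet $e(G)\approx 3737$ while your choice $H=G(50,50,0)$ has $e(H)=\binom{100}{2}-\binom{50}{2}=3725$. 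The paper sidesteps this by splitting on $t(G)\le 5n$ rather than $\beta\gamma\le 5n$ and simply taking $H=K_n$ in that regime, invoking $e(G)\le\binom{n}{2}$ (which \emph{does} follow from the global average-weight condition). Your Case~2 survives the correction to $\alpha^2/2$, since $a\ge\lceil\alpha\rceil+1$ already gives $\binom{a}{2}-\alpha^2/2\ge\alpha/2$ (and one checks separately that $a\ge 2$ even when $\alpha<1$), which covers the $\tfrac14$ concavity loss.
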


        \begin{proof} Let $G$ be a weighted graph with total weight $n$. We may
            assume that $\t(G) > 5n$ because otherwise the complete graph $K_n$
            satisfies the requirements (here we use the fact that, according to
            our definition of a weighted graph, $G$ has at most $n$ vertices, so
            $e(G) \le \binom{n}{2}$).  Let $H$ be the \good{} weighted graph
            that satisfies $|H| = n$, $e(H) \ge e(G)$ and $\t(H) \ge \t(G)$,
            whose existence is ensured by
            \Cref{lem:approx-by-good-weighted-graph}.  Since $H$ is good, there
            exists a partition $\{K, \{u, v\}\}$ of $V(H)$ such that $K$
            induces a clique and $uv$ is the only non-triangular edge. Denote
            the sum of weights (in $H$) of the vertices in $K$ by $\alpha$ and
            the weights of $u$ and $v$ by $\beta$ and $\gamma$; we may assume
            that $\beta \ge \gamma$. Then $e(G) \le \alpha ^ 2 / 2 + \alpha
            \beta + \beta \gamma$ and $\t(G) = \beta \gamma$. 

            We now show that for some integers $a, b, c \ge 0$ the graph $G' =
            G(a,b,c)$ has the desired properties. It is enough to choose $a,b,c$ so
            that
            \begin{align}
                a+b+c & \, = \, n,
                    \label{eq:order} \\
                \binom{a}{2} + (n-b)b \, \ge \, &\frac{\alpha^2}{2} + (n - \beta)\beta,
                    \label{eq:ineq-for-eG}\\
                bc \, \ge \, \beta \gamma &- 5n.
                    \label{eq:ineq-for-tG}
            \end{align}

            Of course, the plan is to set $a \approx \alpha, b \approx \beta, c
            \approx \gamma$, but there are some tedious details to check. We set $a
            = \ceil{\alpha} + 2$ and, depending on whether $\beta \ge n/2$ or
            $\beta < n/2$, either $b = \floor{\beta}$ or $b = \ceil{\beta}$.
            Finally, we set $c = n - a - b$. Note that by the assumption that $\t(G)
            \ge 5n$, it follows that $\beta, \gamma > 5$. In particular,
            since $c \ge \gamma - 4$, $c$ is positive. Now, (\ref{eq:order}) is
            immediate from the definition; (\ref{eq:ineq-for-tG}) is immediate from the
            fact that $b \ge \beta - 1 > 0$ and $c \ge \gamma - 4 > 0$; and
            the only case when (\ref{eq:ineq-for-eG}) is not immediate is when
            $(n-1)/2 \le \beta \le (n+1)/2$. However, in this case the difference
            between $(n - \beta)\beta$ and $(n - b)b$ is at most $1$, and it is
            compensated by the difference between $\binom{a}{2}$ and $\alpha^2/2$.
        \end{proof}

    \subsection{Exchange lemma} \label{subsec:exchange-lemma}
        
        The following lemma, \Cref{lem:exchange}, will prove very useful in the
        proof of our main result in the middle range. Roughly speaking, it says that
        there is a positive number $\zeta$, which we informally call the
        `exchange rate', with the following property. For any graph $G$, not too
        dense and not too sparse, and any number $x$, not too big and not too
        small, we can exchange $x$ edges of $G$ for at least $\zeta x$
        non-triangular edges.  That is, there is a graph $H$ such that $|H| =
        |G|$, $e(H) \ge e(G) - x$ and $\t(H) \ge \t(G) + \zeta x$. Similarly, we
        can exchange $x$ non-triangular edges for at least $\zeta x$ edges.

        This tool is very useful for us, because now we can arrive at a
        contradiction by finding a graph $G$ whose either parameter $e(G)$ or
        $\t(G)$ is too large, even if the other parameter is slightly smaller
        than needed.  

        For any positive integer $n$ and real $e \le \binom{n}{2}$, denote by
        $\t(n, e)$ the maximum number of non-triangular edges 
        among $n$-vertex graphs with at least $e$ edges. Note that if $e \le
        \lfloor n^2 / 4 \rfloor$, then $\t(n, e) = \lfloor n^2 / 4 \rfloor$.
        Moreover, for any $n$, the function $\t(n, e)$ is a non-increasing
        function of $e$.

        \begin{lem} \label{lem:exchange}
            For any $\delta > 0$ there exist $\zeta, \eps, C > 0$ and $n_0$ such
            that the following holds. Here $G$ is a weighted graph with $n \ge n_0$
            vertices and $x$ is a real satisfying $C n \le
            x \le \eps n^2$.
            \begin{enumerate}
                \item
                    If $e(G) \ge e + x$ for some real $e$ satisfying $ n^2 / 4
                    \le e \le (1/2 - \delta)
                    n^2$, then $\t(G) \le \t(n, e) - \zeta x$.
                \item
                    If $\t(G) \ge \t(n, e) + x$ for some real $e \ge (1/4 +
                    \delta) n^2$, then $e(G) \le e - \zeta x$.
            \end{enumerate}
        \end{lem}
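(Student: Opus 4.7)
The plan is to use \Cref{cor:approx-by-integer-example} to reduce both parts to a single fact about a continuous extremal function, and then prove that fact by one-variable calculus; I describe part~(1) in detail, as part~(2) is analogous. Applying \Cref{cor:approx-by-integer-example} to a weighted graph $G$ with $e(G) \ge e + x$ yields integers $a, b, c \ge 0$ with $a + b + c = n$, $\binom{a}{2} + b(a + c) \ge e + x$ and $bc \ge \t(G) - 5n$. Conversely, rounding a real maximiser of the continuous problem to integers and compensating by $O(1)$ adjustments shows that $\t(n, e)$ differs from the continuous maximum by at most $O(n)$. It therefore suffices to prove that, setting
\[
    m(\rho) \;=\; \max\bigl\{\, \beta\gamma : \alpha + \beta + \gamma = 1,\ \alpha, \beta, \gamma \ge 0,\ \alpha^2/2 + \beta(\alpha + \gamma) \ge \rho \,\bigr\} ,
\]
one has $m((e + x)/n^2) - m(e/n^2) \le -\zeta' x/n^2$ for some $\zeta' = \zeta'(\delta) > 0$ on the appropriate range; part~(1) will then follow by absorbing the $O(n)$ losses into the hypothesis $x \ge Cn$.

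The interior optimum of the problem defining $m$ satisfies, by Lagrange multipliers applied jointly to the edge constraint and the simplex constraint, the relation $(\alpha + \beta)\gamma = \beta^2$. Setting $s = \alpha + \beta$ one obtains the explicit parameterisation
\[
    \alpha = s - \sqrt{s(1 - s)}, \qquad \beta = \sqrt{s(1 - s)}, \qquad \gamma = 1 - s ,
\]
with $\rho = s(s - \tfrac{1}{2}) + (1 - s)\sqrt{s(1 - s)}$ and $m(\rho) = (1 - s)\sqrt{s(1 - s)}$. The envelope theorem gives $m'(\rho) = -\beta/\alpha$. As $\rho$ increases from $1/4$ to $1/2$, the parameter $s$ increases monotonically from $1/2$ (where $\alpha = 0$) to $1$ (where $\beta = 0$), so $\beta/\alpha$ is a strictly decreasing function of $\rho$ on this interval.

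Choose $\eps \le \delta/2$ so that $e + x \le (1/2 - \delta/2) n^2$ throughout part~(1). For $\rho \in [1/4, 1/2 - \delta/2]$ the parameter $s$ lies in $[1/2, s_0]$ for some $s_0 = s_0(\delta) < 1$, and hence $\beta \ge \sqrt{s_0(1 - s_0)}$ while $\alpha \le s_0 < 1$, so $|m'(\rho)| = \beta/\alpha \ge \zeta'(\delta) > 0$. The mean value theorem then yields $m((e + x)/n^2) - m(e/n^2) \le -\zeta' x / n^2$, which combined with the reduction gives $\t(G) \le \t(n, e) - \zeta' x + O(n) \le \t(n, e) - (\zeta'/2) x$ once $x \ge C n$ for $C$ large enough, proving part~(1) with $\zeta = \zeta'/2$. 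Part~(2) runs identically, using the inverse relation $\alpha/\beta = -1/m'(\rho)$: on $\rho \in [1/4 + \delta, 1/2]$ the parameter $s$ stays in $[s_1, 1]$ with $s_1 = s_1(\delta) > 1/2$, so $\alpha$ is bounded below by a positive function of $\delta$ while $\beta \le 1/2$, giving a constant lower bound for $\alpha/\beta$. The main obstacle is exactly this derivative bound; the $s$-parameterisation reduces it to a short one-variable computation, but one must also verify that the relevant optima lie in the interior of the simplex (so Lagrange multipliers apply) and carefully absorb the additive $O(n)$ errors from \Cref{cor:approx-by-integer-example} and from integer rounding into the threshold $Cn$.
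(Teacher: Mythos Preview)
Your proposal is correct, but it takes a genuinely different route from the paper's proof, and the comparison is worth recording.

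The paper argues directly on good weighted graphs. For part~(1) it first reduces (via \Cref{lem:approx-by-good-weighted-graph}) to a good weighted graph with clique weight $\alpha$ and non-triangular edge $uv$ of weights $\beta,\gamma$, observes that $\beta,\gamma = \Omega_\delta(n)$ and $\alpha \ge 2x/n$, and then performs a single explicit weight shift: move $x/n$ from the clique onto $u$. This loses at most $x$ edges but gains $\gamma \cdot x/n \ge \Omega_\delta(1)\cdot x$ non-triangular edges; applying \Cref{cor:approx-by-integer-example} to the shifted graph gives the bound. Part~(2) is the reverse shift (from $v$ into the clique). No calculus, no parameterisation of the extremal curve---just one concrete perturbation realising the trade-off.

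Your approach instead passes immediately to the continuous extremal function $m(\rho)$, parameterises its Pareto frontier by $s = \alpha+\beta$, reads off $m'(\rho) = -\beta/\alpha$ from the envelope theorem, and bounds this derivative away from $0$ (respectively from $\infty$) on the relevant range; the mean value theorem then does the work. This is more analytic but also more informative: it identifies the exchange rate \emph{exactly} as $\beta/\alpha$ at the margin, and makes transparent why the two endpoints $\rho = 1/4$ and $\rho = 1/2$ are the degenerate cases (where $\alpha \to 0$ and $\beta \to 0$ respectively). The cost is that you must check the optimum is interior, that $m$ is $C^1$ on the open interval (your $s$-parameterisation does give this, since $\rho$ is a smooth strictly increasing function of $s$ on $(1/2,1)$), and that the $O(n)$ errors from \Cref{cor:approx-by-integer-example} and from integer rounding of the continuous maximiser are absorbed by $x \ge Cn$---all of which you note. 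The paper's argument is shorter and avoids these verifications, at the price of not exposing the structure of $m$.
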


        Before turning to the proof of \cref{lem:exchange}, we give a brief
        overview of the proof.  To prove the first statement, we note that by
        \Cref{lem:approx-by-good-weighted-graph}, we may assume that $G$ is
        \good. We shift the weights of the vertices in $G$ so as to
        increase $\t(G)$ while decreasing $e(G)$ only slightly. An upper bound
        on $\t(G)$ then follows from \Cref{cor:approx-by-integer-example}. The
        second statement is proved in a similar way.
        
        \remark{does the overview make sense?}

        \begin{proof} [ of \Cref{lem:exchange}]
            Let $\delta \in (0, 1/10)$. To prove the first statement, suppose
            that $n, e, x$ satisfy $e \le (1 / 2 - \delta) n ^ 2$ and $C n \le x
            \le \eps n ^ 2$ for constants $C$ and $\eps$ that will be
            determined later.  Let $G$ be a weighted graph such that $|G| = n$
            and $e(G) \ge e + x$. We note that $\t(n, e) \ge \frac{\delta ^ {3 /
            2}}{2} n ^ 2$.  Indeed, the graph $G(a, b, c)$ where $c =
            \frac{\delta}{2} n$, $b = \sqrt{\delta} n$ and $a = n - b - c$ has
            at least $(1 / 2 - \delta)n ^ 2$ edges and $\frac{\delta ^ {3 /
            2}}{2} n ^ 2$ non-triangular edges. By taking $\eps, \zeta$ to
            satisfy $\eps \zeta \le \frac{\delta ^ {3 / 2}}{4}$, we may assume
            that
            \begin{equation} \label{eqn:exchange-lower-bd-t}
                \t(G) \ge \frac{\delta ^ {3 / 2}}{4} n ^ 2,
            \end{equation}
            because otherwise we get $\t(G) \le \t(n, e) - \zeta x$ for free.
            By \Cref{lem:approx-by-good-weighted-graph}, we may assume that
            $G$ is a \good{} weighted graph, so $V(G)$ can be partitioned
            into a clique $K$ and two adjacent vertices $u$ and $v$ such that
            $uv$ is the only non-triangular edge. Denote by $\alpha$ the sum of the
            weights of the vertices in $K$ and let $\beta$ and $\gamma$ be the
            weights of $u$ and $v$ respectively.  By Inequality
            (\ref{eqn:exchange-lower-bd-t}), we have $\beta, \gamma \ge
            \frac{\delta ^ {3 / 2}}{4} n$. Moreover, the removal of the edges
            spanned by $K$ would make $G$ bipartite, so we have $e(G) \le n^2 /
            4 + \alpha^2/2 \le n^2/4 + \alpha n/2$. Recall that $e(G) \ge e + x
            \ge n^2/4 + x$, hence $\alpha \ge 2x / n$.
            
            Let $G'$ be a weighted graph obtained by increasing the weight of
            $u$ by $x / n$ and decreasing the weights of the vertices in $K$ so
            that their new sum of weights is $\alpha - x / n$. It is easy to check that
            $e(G') \ge e(G) - x \ge e$ and $\t(G') = (\beta + x / n) \gamma \ge
            \t(G) + \frac{\delta ^ {3 / 2}}{4} x$.  Furthermore, it follows from
            \Cref{cor:approx-by-integer-example} that $\t(G') \le \t(n, e) +
            5n$, hence $\t(G) \le \t(n, e) + 5n - \frac{\delta^{3 / 2}}{4} x
            \le \t(n, e) - \left( \frac{\delta^{3 / 2}}{4} - \frac{5}{C} \right)
            x$.  By taking $C$ large and $\zeta$ small with respect to $\delta$,
            we can ensure that $\t(G) \le \t(n, e) - \zeta x$.

            To prove the second statement, suppose that $n, e, x$ satisfy $e \ge (1
            / 4 + \delta)n ^ 2$ and $10n < x \le \eps n ^ 2$ for a sufficiently
            small constant $\eps > 0$. Let $G$ be a weighted graph such that
            $|G| = n$ and $\t(G) \ge \t(n, e) + x$.  Note that by taking $\eps,
            \zeta$ to satisfy $\eps \zeta \le \delta / 2$, we may assume that 
            \begin{equation} \label{eqn:exchange-lower-bd-e}
                e(G) \ge \left( \frac{1}{4} + \frac{\delta}{2} \right) n ^ 2,
            \end{equation}
            because otherwise we can conclude immediately that $e(G) \le e -
            \zeta x$.
            Furthermore, by \Cref{lem:approx-by-good-weighted-graph} we may
            assume $G$ is a \good{} weighted graph. Denote by $K$, $u$ and
            $v$ the corresponding clique and vertices and let $\alpha$
            be the total weight of $K$ and let $\beta$ and $\gamma$ be the
            weights of $u$ and $v$. As before, it follows from
            Inequality~(\ref{eqn:exchange-lower-bd-e}) that $\alpha \ge
            \sqrt{\delta} n$. Moreover, $K$ must contain at least two vertices,
            so in particular a vertex $w \in K$ whose weight does not exceed
            $\alpha / 2$. Let $G'$ be the weighted graph obtained by reducing
            the weight of $v$ by $x / 2n$ (note that $\beta \gamma = \t(G) \ge
            x$, so $\gamma \ge x / n$) and increasing the weight of $w$ by the
            same amount . Then, since $x > 10n$, 
            \begin{equation} \label{eqn:exchange-lower-bd-t-G'}
                \t(G') = \beta (\gamma - x / 2n) \ge \t(G) - x / 2 \ge \t(n, e) +
                x / 2 > \t(n, e) + 5n.
            \end{equation}
            Furthermore, since $\alpha \ge \sqrt{\delta}n$, $$e(G') \ge e(G) +
            \frac{x}{2n} \cdot \frac{\alpha}{2} \ge e(G) +
            \frac{\sqrt{\delta}}{4}x.$$ By \Cref{cor:approx-by-integer-example}
            and Inequality (\ref{eqn:exchange-lower-bd-t-G'}), $e(G') < e$,
            because otherwise there exists a graph $H$ with $n$ vertices, at
            least $e$ edges and more than $\t(n, e)$ non-triangular edges, which
            contradicts the definition of $\t(n, e)$.  Thus $e(G) \le e - \zeta
            x$ for any $\zeta \le \frac{\sqrt{\delta}}{4}$.
        \end{proof}

    \subsection{Compressed graphs} \label{subsec:compressed-graphs}

        We now present the notion of \emph{compressed} graphs.  The proof of
        Motzkin and Straus \cite{motzkin-straus} shows that that it suffices to
        prove Tur\'an's theorem for complete $r$-partite graphs.  Indeed, they
        show (using the notion of weighted graphs) that for every $K_r$-free
        graph $G$ there exists a complete $r$-partite graph with at least as
        many edges as $G$.  The class of compressed graphs will play a similar
        role in this paper as the class of complete $r$-partite graphs in the
        proof of Motzkin and Straus. Compressed graphs have fairly restrictive
        structure (though not quite as simple as complete $r$-partite graphs)
        and we shall see (via \Cref{lem:granulation} below) that it suffices to
        prove \Cref{thm:main} for compressed graphs.  The logarithm in the
        following definition is taken in
        base $2$.

        \begin{defn} \label{defn:compressed} 
            A graph $G$ on $n$ vertices is called \emph{compressed} if the following
            assertions hold.
            \begin{enumerate}
                \item \label{itm:defn-compressed-indep}
                    Every independent set in $G$ is a union of at most $3 \log
                    n$ sets of clones, each one of which, with at most four
                    exceptions, has size at most $3n ^ {1 / 3}$.

                \item \label{itm:defn-compressed-triangular-vs}
                    The set $U$ of \triangularVs{} vertices induces a clique in
                    $G$. Furthermore, the vertices of $U$ all have the same
                    neighbourhood outside of $U$.
            \end{enumerate}
        \end{defn}

        To demonstrate how compressed graphs may be of use to us, we mention the
        following observation.

        \begin{obs} \label{obs:compressed}
            Let $G$ be a compressed graph with $n$ vertices and let $I$ be an
            independent set of size at least $45 n^{1/3} \log n$. Then $I$
            contains a set of clones of size at least $|I| / 5$.
        \end{obs}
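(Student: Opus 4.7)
The plan is a straightforward pigeonhole argument using the structural constraint in \Cref{defn:compressed}~(\ref{itm:defn-compressed-indep}). First, I would apply that clause to the independent set $I$ to write it as a disjoint union $I = S_1 \cup \dots \cup S_k$ of sets of clones, with $k \le 3 \log n$ and with at most four of the $S_i$ having size exceeding $3 n^{1/3}$. Call the latter the large parts and the remaining parts small.

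Next I would bound the total contribution of the small parts. There are at most $3 \log n$ of them, each of size at most $3 n^{1/3}$, so they cover at most $9 n^{1/3} \log n$ vertices of $I$. Using the hypothesis $|I| \ge 45 n^{1/3} \log n$, this total is at most $|I|/5$, so the large parts together cover at least $4|I|/5$ vertices.

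Finally, since there are at most four large parts, by averaging one of them must contain at least $(4|I|/5)/4 = |I|/5$ vertices, and this part is a set of clones as required. The only step with any content is verifying the arithmetic $9 n^{1/3} \log n \le |I|/5$, which is immediate from the assumed lower bound on $|I|$; there is no real obstacle here, as the observation is essentially a direct unpacking of the definition.
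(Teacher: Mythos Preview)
Your proposal is correct and matches the paper's own argument essentially line for line: the paper also bounds $|I| \le 4m + 9 n^{1/3} \log n \le 4m + |I|/5$ (where $m$ is the size of the largest clone class) and then solves for $m \ge |I|/5$.
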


        Indeed, let $m$ be the size of the largest set of clones in $I$. Then
        Condition \ref{itm:defn-compressed-indep} of \Cref{defn:compressed}
        implies that $|I| \le 4m + 9n^{1/3} \log n \le 4m + |I| / 5$, so $m \ge
        |I| / 5$.

        The following lemma is the main reason we chose to introduce the notion
        of compressed graphs: it shows that it suffices to prove \Cref{thm:main}
        for compressed graphs.

        \begin{lem} \label{lem:granulation}
            Let $G$ be a graph on $n$ vertices. Then there is a compressed graph
            $H$ such that $|H| = n$,  $e(H) \ge e(G)$ and $\t(H) \ge \t(G)$.
        \end{lem}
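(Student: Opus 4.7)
The plan is to transform $G$ by a sequence of operations that preserve the vertex count and do not decrease $e$ or $\t$, ending at a compressed graph $H$. First, we reduce to the case that $G$ is optimal (in the sense of \Cref{def:optimal}) --- this is without loss of generality, and it allows us to invoke \Cref{obs:two-vs}, which says that for any two vertices $u, v$ of $G$, either $\deg(u) \ge \deg(v) - 1$ or $\degNonTriangular(u) \ge \degNonTriangular(v) - 1$.

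The core tool is a \emph{cloning move}: take two non-adjacent vertices $u, v$ and replace $N(v)$ by $N(u)$, so that $\{u, v\}$ becomes a set of clones. Because $u$ and $v$ are independent, only edges in $N(u) \bigtriangleup N(v)$ can change their triangular status, and by orienting the move according to whichever inequality in \Cref{obs:two-vs} holds, we arrange that the net change in $e$ and $\t$ is non-negative up to an additive slack of order $O(1)$; any residual discrepancy can be absorbed via \Cref{lem:exchange}, which trades excess in one parameter for a gain in the other. To establish Condition~\ref{itm:defn-compressed-triangular-vs} of \Cref{defn:compressed}, we apply cloning repeatedly to pairs of non-adjacent triangular vertices: since both such vertices have non-triangular degree $0$, the move is automatically safe, and iterating turns the set $U$ of triangular vertices into a clique in which every vertex has the same external neighbourhood.

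For Condition~\ref{itm:defn-compressed-indep} of \Cref{defn:compressed}, let $I$ be an independent set with clone-class partition $I_1, \ldots, I_k$. The clause allowing up to four large classes follows from a quadratic argument: when viewed in the weighted-graph formalism of \Cref{subsec:weighted-graphs}, two clone classes of size exceeding $3n^{1/3}$ each contribute product-sized terms to $e$ and $\t$, so if five large classes coexisted, two of them could be merged into an even larger class with a strict gain, contradicting optimality. The bound $k \le 3\log n$ follows by a dyadic argument after stabilisation: any two classes whose sizes differ by less than a factor of two can be merged via a cloning move justified by \Cref{obs:two-vs}, so once no further merges are available the surviving class sizes form an almost-geometric sequence, yielding $O(\log n)$ classes in total.

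The main obstacle is controlling the effect of a cloning move on edges not directly incident to $u$ or $v$: rewriting $N(v)$ may change the triangular status of edges among pairs in $N(u) \bigtriangleup N(v)$, so one must show that the resulting slack in $e$ or $\t$ is small enough to be recovered via \Cref{obs:two-vs} together with \Cref{lem:exchange}. The precise constants $3 n^{1/3}$ and $3 \log n$ emerge from balancing these slacks against the clone-class sizes that persist after the dyadic stabilisation.
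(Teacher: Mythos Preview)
Your proposal has several genuine gaps.

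\textbf{The cloning move is not controlled.} Replacing $N(v)$ by $N(u)$ can flip the triangular status of edges that are not incident to $v$ at all: any edge $xy$ with $x,y\in N(u)\setminus N(v)$ that was non-triangular may become triangular once $v$ joins its endpoints' common neighbourhood, and conversely. You acknowledge this as ``the main obstacle'' but do not resolve it; the slack is not $O(1)$ in general, it can be of order $|N(u)\bigtriangleup N(v)|^2$. Moreover, even an $O(1)$ slack cannot be absorbed by \Cref{lem:exchange}, which explicitly requires $x\ge Cn$.

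\textbf{The argument for Condition~\ref{itm:defn-compressed-triangular-vs} is backwards.} Cloning two vertices makes them non-adjacent (a set of clones is independent by definition), so iterated cloning cannot turn $U$ into a clique.

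\textbf{The dyadic argument does not go through.} \Cref{obs:two-vs} guarantees only one of the two inequalities, so a merge that is safe for $e$ need not be safe for $\t$, and vice versa; there is no mechanism here to control both simultaneously. The same objection applies to your ``quadratic argument'' for the four exceptional classes.

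The paper's proof avoids all of this by working in the integer-weighted model from the start: one chooses an integer-weighted graph $H$ with $|H|=n$, $e(H)\ge e(G)$, $\t(H)\ge \t(G)$, minimising first the number of vertices and then the number of heavy vertices. Since only \emph{weights} are altered and the underlying graph never changes, the triangular/non-triangular status of every edge is preserved throughout. The bounds $3\log n$ and $3n^{1/3}$ then come from pigeonhole arguments: among the $\binom{m}{m/2}>n^2$ half-subsets of an independent $m$-set with $m\ge 3\log n$, two have equal total degree, and one can shift weight between them to drop a vertex; among the $>3n^{4/3}$ nonnegative integer quintuples summing to $3n^{1/3}$, two give equal weighted degree sums over five heavy independent vertices, and shifting along their difference lowers some weight below $3n^{1/3}$. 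Condition~\ref{itm:defn-compressed-triangular-vs} is handled at the end by simply adding all missing edges inside $U$ (which cannot destroy any non-triangular edge, as no such edge touches $U$) and then copying the outside neighbourhood of a maximum-degree vertex of $U$ to every other vertex of $U$.
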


        \begin{proof}
            Given a graph $G$ on $n$ vertices, we let $H$ be a weighted graph
            with the following properties.
            \begin{itemize}
                \item $|H| = n$, $e(H) \ge e(G)$ and $\t(H) \ge \t(G)$.
                \item All vertices of $H$ have integer weights.
                \item The number of vertices of $H$ is minimal under the first
                    two conditions.
                \item The number of vertices of weight at least $3 n^{1/3}$ is
                    minimal under the first three conditions.
            \end{itemize}

            We shall show that the graph obtained by replacing each vertex of
            $H$ by a set of clones of size equal to the weight of the vertex, is
            compressed. To that end, we show that $H$ has no independent
            set of size larger than $3 \log n$, and that the vertices with
            weight larger than $3n^{1/3}$ do not contain an independent set of
            size at least five.

            We first show that every independent set of $H$ contains at most $3
            \log n$ vertices.  Suppose to the contrary that $H$ contains an
            independent set $I$ of size $m \ge 3 \log n$. For any set $A
            \subseteq I$ denote $S_A = \sum_{x \in A}\deg(x)$ and $T_A = \sum_{x
            \in A} \degNonTriangular(x)$. Note that $S_A \le n ^ 2$ for every $A
            \subseteq I$.  Since $\binom{m}{m / 2} \ge 2^m / \sqrt{2m} \ge n^3
            / \sqrt{2 n} > n ^ 2$, it follows that there exist
            distinct sets $A, B \subseteq I$ such that $|A| = |B|$ and $S_A =
            S_B$.  By replacing $A$ and $B$ by $A \setminus B$ and $B \setminus
            A$, we may assume that $A \cap B = \emptyset$. Also, without loss of
            generality, $T_A \ge T_B$.
            
            Let $w$ be the minimum weight of a vertex in $B$. Consider the
            weighted graph $H'$, obtained by increasing the weight of each
            vertex in $A$ by $w$ and decreasing the weight of each vertex in $B$
            by $w$ (and removing vertices whose weight becomes $0$). Then $|H'|
            = |H|$, $e(H') = e(H)$, $\t(H') \ge \t(H)$ and the number of
            vertices in $H'$ is smaller than the number of vertices in $H$,
            contradicting the choice of $H$.  It follows that every independent
            set of $H$ contains at most $3 \log n$ vertices.

            We now show that given an independent set of five vertices $\{u_1,
            \ldots, u_5\}$ in $H$ at least one of the vertices $u_i$ has weight
            at most $3n^{1/3}$.  Indeed, suppose that the weight of each of the
            vertices exceeds $3n^{1/3}$.  For any quintuple of non-negative
            integers $k = (k_1, \ldots, k_5)$, denote $S_k = k_1 \deg(u_1)
            + \ldots + k_5 \deg(u_5)$ and $T_k = k_1
            \degNonTriangular(u_1) + \ldots + k_5 \degNonTriangular(u_5)$.
            Consider only the quintuples $k$ that satisfy $k_1 + \ldots + k_5 =
            3 n ^ {1 / 3}$: there are at least $\binom{3 n ^ {1 / 3} + 4}{4} \ge
            \frac{81}{24} n ^ {4 / 3}$ such quintuples and for each of them
            we have $S_k \le 3 n ^ {4 / 3}$.  Thus, there exist distinct
            quintuples $k$ and $l$, whose coordinates are non-negative integers
            whose sum is $3 n ^ {1 / 3}$, such that $S_k = S_l$.  Without loss of
            generality, we may assume that $T_k \ge T_l$.
            
            Consider the graph $H'$, obtained by repeatedly adding $k_i - l_i$ to
            the weight of each vertex $u_i$, as long as all weights remain
            non-negative (note that this process will end because $k_i < l_i$
            for some $i \in [5]$).  The resulting graph $H'$ satisfies $|H'| =
            |H|$, $e(H') = e(H)$ and $\t(H') \ge \t(H)$.  Furthermore, since
            $|k_i - l_i| \le 3 n ^ {1 / 3}$, for some $i \in [5]$ the weight of
            $u_i$ in $H'$ is smaller than $3 n ^ {1 / 3}$.  In particular, $H'$
            has fewer vertices with weight at least $3 n ^ {1 / 3}$ than $H$.
            This is, again, a contradiction to the choice of $H$. It follows
            that every independent set in $H$ has at most four vertices with
            weight at least $3 n ^ {1 / 3}$.

            Recall that $H$ has integer weights, so we may view it as a graph
            where a vertex of weight $w$ represents a set of clones of size $w$.
            The graph $H$ satisfies Condition~\ref{itm:defn-compressed-indep} of
            \Cref{defn:compressed}.  Denote by $U$ the set of \triangularVs{}
            vertices in $H$. We may add all edges missing from $H[U]$ without
            creating new triangles, so we may assume that $U$ induces a clique
            in $H$.  Let $u \in U$ be a vertex of maximum degree in $H$. For
            every $v \in U \setminus \{u\}$, we remove the edges between $v$ and
            $V(H) \setminus U$ and add the edges between $v$ and the
            neighbourhood of $u$ in $V(H) \setminus U$.  This process does not
            decrease the total number of edges and does not create new
            triangles. Moreover, it results in a graph $H'$ that retains
            Condition~\ref{itm:defn-compressed-indep} and whose \triangularVs{}
            vertices have the same neighbourhood outside of $U$.  It follows
            that $H'$ satisfies Conditions \ref{itm:defn-compressed-indep} and
            \ref{itm:defn-compressed-triangular-vs}, i.e., $H'$ is compressed,
            as required.  
        \end{proof}

\section{Almost bipartite} \label{sec:almost-bipartite}

    In this section we prove \Cref{thm:almost-bipartite}.
    
    \thmAlmostBipartite*

    Throughout this section we assume that $G$ is an optimal graph (this means
    that increasing the number of edges reduced the number of non-triangular
    edges and vice versa, see \Cref{def:optimal}) with $n$ vertices and $e = (1
    / 4 + \myeta)n^2$ edges, where $0 < \myeta \le \delta$ and $\delta$ is a
    small fixed positive constant.  Moreover, we always assume that $n$ is large
    enough to satisfy any inequalities that we may write down.
    
    To get a rough idea about how large $t(G)$ is, we derive the following lower
    bound.  Consider the graph $G(a, b, c)$ where $a = \lceil \sqrt{2\myeta} n
    \rceil + 1$, $b = \lceil n/2 \rceil$ and $c = n - a - b = \lfloor n/2
    \rfloor - \lceil \sqrt{2\myeta} n \rceil - 1$. Then $e(G(a,b,c)) =
    \binom{a}{2} + (n-b)b \ge (1/4 + \myeta)n^2$ and $t(G(a,b,c)) = bc \ge \left(1/4
    - \sqrt{\myeta / 2} - O(1/n)\right) n^2$. Since $G$ is optimal, it follows that
    \begin{align} \label{eqn:lower-bd-t-G}
        \t(G) \ge \left( \frac{1}{4} - \sqrt{\frac{\myeta}{2}} -
                  O \left( \frac{1}{n} \right) \right) n^2.
    \end{align}
    Moreover, we have $e \ge \lfloor n^2 / 4 \rfloor + 1$,
    so in fact $\myeta n^2 \ge 1/2$ and therefore $1/n = O(\sqrt{\myeta})$. It
    follows that
    \begin{align} \label{eqn:lower-bd-t-G-alt}
        \t(G) \ge \left( \frac{1}{4} - O\left( \sqrt{\myeta} \right)\right) n^2.
    \end{align}
    
    \remark{should we write $(1/4 + O(\sqrt{\myeta}))$?}

    We divide the proof of \Cref{thm:almost-bipartite} into four parts,
    represented by the following four propositions.  In the first of these
    propositions we show that $G$ has the following structure (see also
    \Cref{fig:struct-almost-bip}), which already shows that $G$ is close to a
    graph $G(a, b, c)$.

    \begin{restatable}{prop}{propStructAlmostBip} \label{prop:struct-almost-bip}
        There is a partition $\{A, B, C, D\}$ of $V(G)$ satisfying the
        following assertions.
        \begin{enumerate}
            \item
                All possible edges between $B$ and $C$ are present in $G$ and
                are non-triangular. In particular, $B$ and $C$ are independent
                sets. Moreover, $|B|, |C| \ge (1 / 2 - O(\sqrt{\myeta}))n$.
            \item
                There are no edges between $A$ and $C$ nor between $B$ and $D$.
            \item
                The induced subgraphs $G[A]$ and $G[D]$ do not have isolated
                vertices.
            \item
                Every vertex in $A \cup D$ is incident with at most
                $O(\sqrt{\myeta}n)$ non-triangular edges of $G$. Moreover, the
                sets $A$ and $D$ do not span non-triangular edges (but there may
                be non triangular edges between $A$ and $D$).
        \end{enumerate}
    \end{restatable}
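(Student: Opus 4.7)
Starting from the lower bound~\eqref{eqn:lower-bd-t-G-alt}, note that $e(G) - \t(G) = O(\sqrtEta) n^2$, so $G$ contains only $O(\sqrtEta n^2)$ triangular edges. The non-triangular subgraph $N$ of $G$ is triangle-free (any triangle of $N$ would put each of its edges in a triangle of $G$) and has $\ge (1/4 - O(\sqrtEta)) n^2$ edges. Applying Erd\H{o}s--Simonovits stability to $N$ produces a bipartition $V(G) = P_1 \cup P_2$ with $|P_1|, |P_2| = (1/2 \pm O(\sqrtEta))n$ and with only $O(\sqrtEta n^2)$ non-triangular edges inside the parts; combined with the triangular-edge bound this gives $e(G[P_1]) + e(G[P_2]) = O(\sqrtEta n^2)$.

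I would then set $B = \{v \in P_1 : N_G(v) \subseteq P_2\}$, $A = P_1 \setminus B$, and analogously $C, D \subseteq P_2$. By construction $B$ and $C$ are independent and $G[A]$, $G[D]$ have no isolated vertices, giving property~3. For property~1, any common neighbour of $b \in B$ and $c \in C$ would have to lie in $P_1 \cap P_2 = \emptyset$, so every $B$--$C$ edge is automatically non-triangular; and if some $bc$ were missing, adding it would strictly increase both $e(G)$ and $\t(G)$, contradicting the optimality of $G$. The size bounds $|B|, |C| \ge (1/2 - O(\sqrtEta))n$ in property~1 then reduce to bounding $|A|, |D|$ from above.

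The technical crux, and the step I expect to be the main obstacle, is obtaining the \emph{linear} bound $|A|, |D| = O(\sqrtEta)n$: naive counting yields only $|A| \le 2e(G[P_1]) = O(\sqrtEta n^2)$, so one must show that every $v \in A$ has $\Omega(n)$ neighbours in $P_1$. The plan is to invoke \Cref{obs:two-vs}: most $B$-vertices have close to $|P_2| \approx n/2$ non-triangular neighbours (otherwise $\t(G)$ would fall short of~\eqref{eqn:lower-bd-t-G-alt}), so if some $v \in A$ had $P_1$-degree much smaller than this, replacing $N_G(v)$ by the neighbourhood of such a $B$-vertex would increase both $e(G)$ and $\t(G)$ simultaneously, contradicting optimality. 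Once $|A|, |D| = O(\sqrtEta)n$ is secured, property~2 (no $A$--$C$ or $B$--$D$ edges) follows by a similar exchange argument — any such edge can be traded against a missing $B$--$C$ edge or a triangular $A$--$A$ edge for a strict gain in one of the two parameters — and property~4 (every vertex of $A \cup D$ incident to at most $O(\sqrtEta n)$ non-triangular edges, with no non-triangular edges inside $A$ or inside $D$) is obtained by iterating the same optimality swaps, using that $A \cup D$ must behave like a near-clique whose vertices contribute negligible non-triangular degree. Everything downstream of the sharpening from $O(\sqrtEta n^2)$ to $O(\sqrtEta n)$ is essentially systematic bookkeeping with optimality.
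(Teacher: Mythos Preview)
Your overall strategy (stability on the non-triangular subgraph, then refine via optimality) is natural, but there are two genuine gaps in the ``technical crux'' step.

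First, the balance you claim from stability is too strong. Erd\H{o}s--Simonovits applied to the triangle-free graph $N$ with $(1/4 - O(\sqrtEta))n^2$ edges gives a bipartition $P_1 \cup P_2$ with $O(\sqrtEta)n^2$ non-triangular edges inside the parts, but only $|P_i| = (1/2 \pm O(\eps^{1/4}))n$: the balance comes from $|P_1||P_2| \ge (1/4 - O(\sqrtEta))n^2$, which yields a fourth-root, not a square-root. So the target $|B|,|C| \ge (1/2 - O(\sqrtEta))n$ cannot be read off directly from stability; an improvement step is required.

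Second, and more seriously, your plan to obtain $|A| = O(\sqrtEta)n$ via \Cref{obs:two-vs} does not work as sketched. You want every $v \in A$ to have $\Omega(n)$ neighbours \emph{in $P_1$}, but \Cref{obs:two-vs} compared against a $B$-vertex $b$ only gives $\deg(v) \ge \deg(b)-1$ or $\degNonTriangular(v) \ge \degNonTriangular(b)-1$; neither bounds the $P_1$-degree of $v$, since $v$ may have almost all its (non-triangular or total) neighbours in $P_2$. Consequently the edge-counting $|A|\cdot\Omega(n)\le 2e(G[P_1])$ is unjustified, and without $|A|,|D|=O(\sqrtEta)n$ your downstream arguments for properties~2 and~4 (which you yourself flag as dependent on this bound) do not go through. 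The exchange argument you sketch for property~2 is also unclear: with your definitions a vertex $a\in A$ may perfectly well be adjacent to $c\in C$ (nothing forbids $N_G(a)$ from meeting $P_2$), and since all $B$--$C$ edges are already present there is no obvious swap.

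The paper avoids both problems by building $B$ and $C$ \emph{directly} as non-triangular neighbourhoods of high-$\degNonTriangular$ vertices (\Cref{claim:large-almost-complete-bipartite,claim:large-complete-bip}), so that by construction every $b\in B$ has $\ge 2n/5$ non-triangular neighbours in $C$ and vice versa. Then \Cref{obs:two-vs} forces $\deg(v)\ge 2n/5-1$ for \emph{every} $v$, and a simple common-neighbour count shows no vertex can be adjacent to both $B$ and $C$; this simultaneously yields property~2, the complete non-triangular $B$--$C$ bipartite graph, and (after a second pass) the sharp $|B|,|C|\ge(1/2-O(\sqrtEta))n$. The key point is that the ``many non-triangular neighbours on the other side'' property is built in from the start rather than recovered afterwards.
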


    \begin{figure}[h]\centering 
        \includegraphics[scale=.8]{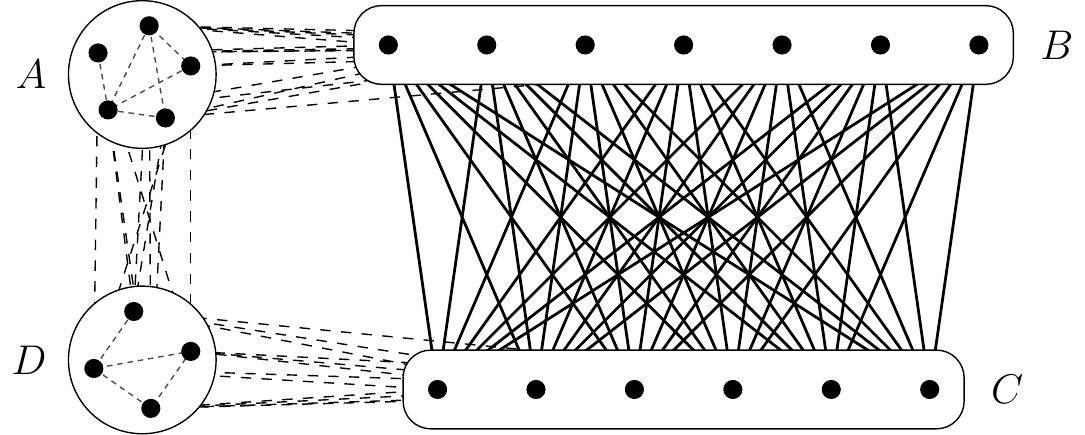}

        \caption{The partition $\{A, B, C, D\}$}
        \label{fig:struct-almost-bip}
    \end{figure}

    From here the proof of \Cref{thm:almost-bipartite} splits into two cases:
    $\myeta \le \myK/n$ and $\myeta \ge \myK/n$, where $\myK$ is a small absolute
    positive constant that will be determined (implicitly) later. The following
    proposition completes the proof when $\myeta$ is small.

    \begin{restatable}{prop}{propFiddlingVeryCloseBip}
        \label{prop:fiddling-very-close-bip}
        Suppose that $\myeta \le \myK/n$ for a sufficiently small absolute
        constant $\myK > 0$. Then $G \cong G(a, b, c)$ for some $a, b, c$.
    \end{restatable}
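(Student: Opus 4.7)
Write $a = |A|$, $b = |B|$, $c = |C|$, $d = |D|$ for the sizes of the partition provided by \Cref{prop:struct-almost-bip}. The hypothesis $\myeta \le \myK / n$, combined with $\myeta n^2 \ge 1/2$ (forced by $e \ge \lfloor n^2 / 4 \rfloor + 1$), gives $\sqrt{\myeta}\, n = O(\sqrt{\myK n})$, so for $\myK$ small enough we have $a + d = n - b - c = O(\sqrt n)$, and every vertex of $A \cup D$ has non-triangular degree $O(\sqrt n)$. The conclusion $G \cong G(a', b', c')$ amounts to three assertions: (i) one of $A, D$ is empty; (ii) the other side induces a clique; (iii) that clique is joined completely to the corresponding independent side. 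The plan is to prove (i)--(iii) in turn, invoking the optimality of $G$ at each step.

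The main step is (i). Suppose both $A, D$ are non-empty; by the $B \leftrightarrow C$ symmetry we may assume $b \ge c$. To contradict optimality I would produce a graph $H$ with $|H|=n$, $e(H) \ge e(G)$, and $\t(H) > \t(G)$ via a local modification of a vertex $v \in D$: delete $v$ and insert a fresh vertex $v'$ joined to every vertex of $B$. The $b \ge (1/2-o(1))n$ new edges $v'$--$B$ are non-triangular (since $B$ is independent and $v'$ has no other neighbour), contributing a large term to $\t(H)$. The non-triangular edges destroyed at $v$ number at most $\degNonTriangular(v) = O(\sqrt n)$; any further edge $xy$ that loses its triangularity must lie in $G[N(v)] \subseteq G[A \cup D \cup C]$ with its unique triangle through $v$, and the structural restrictions (no $A$--$C$ or $C$--$C$ edges) combined with $a + d = O(\sqrt n)$ keep the total of such edges $O(n)$. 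The edge balance $b - \deg_G(v) \ge 2b - n + 1 = -O(\sqrt n)$ loses only $O(\sqrt n)$ edges, which can be repaired by adding missing clique edges inside $A$ (which cannot decrease $\t$). Thus, up to symmetry, $D = \emptyset$.

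For (ii), if $u, v \in A$ are non-adjacent then either they share no common neighbour---in which case adding the edge $uv$ strictly increases both $e$ and $\t$, contradicting optimality---or they share one, in which case \Cref{obs:two-vs} applied to $u, v$ lets us swap in the missing edge for a redundant one and again contradict optimality. For (iii), fix $u \in A$ and $v \in B$ with $uv \notin E$, and pick $v' \in B$ with $uv' \in E$; such a $v'$ exists because $u$ has $\Omega(n)$ $B$-neighbours (comparing $u$ with any vertex of $C$ via \Cref{obs:two-vs} forces $\deg(u) = \Omega(n)$, and $|N(u) \cap B| \ge \deg(u) - a = \Omega(n)$). Since $v$ and $v'$ have identical neighbourhoods outside $A \cup B$ (both see all of $C$, and $D = \emptyset$), \Cref{obs:two-vs} together with a short iteration forces $|N(v) \cap A| = a$. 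Combining (i)--(iii) gives $G \cong G(a, b, c)$. The principal obstacle is (i): the edge change under the modification is only $-O(\sqrt n)$, so every source of loss in both $e$ and $\t$ must be controlled to the same precision---achievable thanks to the size bound $a + d = O(\sqrt n)$ and the structural constraints on $A$--$C$ and $B$--$D$ adjacencies.
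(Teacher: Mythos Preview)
Your approach diverges from the paper's in a significant way, and the divergence creates a genuine gap.

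\textbf{What the paper does.} The paper never tries to show that one of $A, D$ is empty. Instead it proves, via a single local edge-swap, that there are \emph{no non-triangular edges with an end in $A \cup D$}. Concretely: if $uv$ is non-triangular with $u \in A$ and $v \in B \cup D$, one transfers $u$'s $A$-neighbours over to $v$, strips $u$ of its $A$-neighbours, and then joins $u$ to all of $B$. This keeps $e$ non-decreasing (the transfer is edge-neutral; the $u$--$B$ completion only adds edges) while gaining $|B| - O(\kappa n)$ non-triangular edges, contradicting optimality. Once $A \cup D$ carries no non-triangular edges, optimality forces $A \cup D$ to be a clique, and rerouting the $D$--$C$ edges to $D$--$B$ (using $|B| \ge |C|$) yields $G(|A \cup D|, |B|, |C|)$.

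\textbf{The gap in your step (i).} Your vertex-replacement (delete $v \in D$, insert $v'$ joined to all of $B$) controls $\t$ correctly, but the edge balance $b - \deg_G(v) \ge 2b - n + 1$ can genuinely be negative of order $\sqrt n$. Your proposed repair, ``add missing clique edges inside $A$'', is not justified: nothing so far prevents $G[A]$ from already being complete (indeed, once one knows---as the paper shows---that $A$ carries no non-triangular edges, optimality makes $A$ a clique), and even when missing edges exist in $A$, inserting $a_1 a_2$ can turn an $A$--$B$ or $A$--$D$ edge triangular (such non-triangular edges are allowed by \Cref{prop:struct-almost-bip}, up to $O(\sqrt{\eps}\,n)$ per vertex). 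So the repair can fail on the $e$-side, the $\t$-side, or both. This is exactly the difficulty the paper sidesteps by working with an edge-swap that is edge-neutral by construction.

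\textbf{Steps (ii) and (iii).} These are also incomplete as written. In (ii), any two vertices of $A$ \emph{do} share a common neighbour in $B$ (their degrees are at least $(1/2 - O(\sqrt{\eps}))n$ while $|A \cup D| = O(\sqrt{\eps}\,n)$), so your first case is vacuous; and in the second case, \Cref{obs:two-vs} compares degrees and non-triangular degrees of two vertices---it does not let you ``swap in the missing edge for a redundant one''. Adding the edge $uv$ can destroy up to two non-triangular $A$--$B$ edges, and you have not ruled those out. The paper avoids this entirely because, after its first step, there are no non-triangular edges touching $A$, so filling in $A \cup D$ to a clique is harmless. Your (iii) has the same issue: the argument via \Cref{obs:two-vs} is too vague to force $N(v) \cap A = A$ for every $v \in B$.

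In short, the crucial idea you are missing is to first eliminate \emph{all} non-triangular edges incident to $A \cup D$ by an $e$-preserving manipulation; once that is done, (ii) and (iii) become one-liners, and the question of whether $D$ is literally empty never needs to be addressed.
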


    \remark{is using the isomorphism sign clearer here?}

    If $\myeta$ is large,  we first obtain sharp estimates for the sizes of
    the sets $A, B, C, D$.

    \begin{restatable}{prop}{propSizesAlmostBip} \label{prop:sizes-almost-bip}
        Let $G$ and $A, B, C, D$ satisfy the conclusions of
        \Cref{prop:struct-almost-bip} and suppose that $|B| \ge |C|$ and that
        $\myeta \ge \myK / n$ for some constant $\myK > 0$.  Then
        \begin{align*}
            |A \cup D| &= \left( \sqrt{2 \myeta} + O_{\myK}(\myeta)\right)n, \\
            |B| &= \left(\frac{1}{2} - O_{\myK}(\myeta ^ {3/4})\right)n, \\
            |C| &= \left(\frac{1}{2} - \sqrt{2 \myeta} + O_{\myK}(\myeta ^ {3
                / 4})\right) n.
        \end{align*}
    \end{restatable}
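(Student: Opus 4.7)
Let $b = |B|$, $c = |C|$, $s = |A \cup D|$, so $b + c = n - s$. By \Cref{prop:struct-almost-bip} we have $s = O(\sqrt{\myeta}n)$, and we are given $b \ge c$. The plan is to derive matching upper and lower bounds on $s$, then to extract sharper estimates on $b$ and $c$ from a refined edge count.

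For the upper bound on $s$, I would count non-triangular edges. They split into the $bc$ edges between $B$ and $C$ (all non-triangular by \Cref{prop:struct-almost-bip}) and those incident to $A \cup D$. Since each vertex of $A \cup D$ has non-triangular degree $O(\sqrt{\myeta}n)$, the latter total at most $O(\sqrt{\myeta}n) \cdot s$, so $\t(G) \le bc + O(\sqrt{\myeta}n) s$. Combining with the lower bound $\t(G) \ge (1/4 - \sqrt{\myeta/2})n^2 - O_\myK(\myeta n^2)$ from Inequality~(\ref{eqn:lower-bd-t-G}) (the $O(1/n) \cdot n^2 = O(n)$ error is absorbed into $O_\myK(\myeta n^2)$ since $\myeta \ge \myK/n$) and with the trivial bound $bc \le (n - s)^2/4$, one obtains $s \le \sqrt{2\myeta}n + O_\myK(\myeta n)$ after using the bootstrap $s = O(\sqrt{\myeta}n)$ to absorb the $O(s^2)$ and $O(\sqrt{\myeta}n \cdot s)$ corrections.

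For the lower bound on $s$ together with the estimates on $b$ and $c$, I would count all edges. Setting $a_1 = |A|$ and $d = |D|$, the edges of $G$ decompose as: $bc$ edges between $B$ and $C$; at most $a_1 b + cd$ edges between $A \cup D$ and $B \cup C$ (since there are no $AC$- or $BD$-edges); and at most $\binom{s}{2}$ edges inside $A \cup D$. The key refinement uses $b \ge c$ to collapse the middle term: $a_1 b + cd \le (a_1 + d)b = sb$, which gives
\[
    e(G) \;\le\; bc + sb + \binom{s}{2} \;=\; b(n - b) + \binom{s}{2}.
\]
Since $b(n - b) \le n^2/4$ and $e(G) \ge (1/4 + \myeta)n^2$, this yields $s^2 \ge 2\myeta n^2$, hence $s \ge \sqrt{2\myeta}n$. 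Together with the upper bound, $|A \cup D| = (\sqrt{2\myeta} + O_\myK(\myeta))n$, so $s^2/2 \le \myeta n^2 + O_\myK(\myeta^{3/2}n^2)$. Feeding this back into the displayed inequality gives $b(n - b) \ge n^2/4 - O_\myK(\myeta^{3/2}n^2)$; combined with $b(n - b) = n^2/4 - (b - n/2)^2$, this rearranges to $|b - n/2| \le O_\myK(\myeta^{3/4}n)$, yielding the estimate on $|B|$. The estimate on $|C|$ then follows from $c = n - s - b$.

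The main obstacle is the careful bookkeeping of error terms rather than any genuinely hard step: the $\myeta^{3/4}$ exponent is forced because $s$ is pinned down only to additive error $O_\myK(\myeta n)$ (giving $s^2$ error $O_\myK(\myeta^{3/2}n^2)$), and one then takes a square root in the final step. The essential structural input beyond \Cref{prop:struct-almost-bip} is the collapse $a_1 b + cd \le sb$, which reduces a two-variable optimization to the one-variable concave maximization of $b(n - b)$.
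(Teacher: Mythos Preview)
Your proposal is correct and follows essentially the same approach as the paper: both arguments bound $\t(G)$ from above via $bc + O(\eps n^2)$ to obtain the upper bound on $|A\cup D|$, and bound $e(G)$ from above by $b(n-b) + s^2/2$ (using $b \ge c$ to collapse $|A|\,b + |D|\,c \le sb$) to extract both the lower bound on $s$ and the $\eps^{3/4}$ estimate on $b$. The only cosmetic difference is that the paper parametrizes via $\alpha,\beta$ while you work directly with $s,b$, and the paper uses $a^2/2$ rather than $\binom{s}{2}$ for the edges inside $A\cup D$.
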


    The next proposition completes the proof of \Cref{thm:almost-bipartite} in
    the case where $\myeta$ is large.

    \begin{restatable}{prop}{propFiddlingAlmostBip}
        \label{prop:fiddling-almost-bip}
        Suppose that $\myeta \ge \myK / n$ for some absolute
        constant $\myK > 0$. Then $G \cong G(a, b, c)$ for some $a, b, c$.
    \end{restatable}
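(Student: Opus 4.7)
The plan is to leverage the sharp structural and size information from Propositions \ref{prop:struct-almost-bip} and \ref{prop:sizes-almost-bip}, combined with the optimality of $G$, to force the partition $\{A, B, C, D\}$ into the exact form of a graph $G(a, b, c)$. By the symmetry between $B, C$ (and correspondingly $A, D$) in the partition, I may assume $|B| \ge |C|$; then \Cref{prop:sizes-almost-bip} gives $|B| - |C| = (\sqrt{2\myeta} + O_\myK(\myeta^{3/4}))n = \Omega(\sqrt{\myeta}\,n)$.

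The main step is to show $D = \emptyset$, by comparing $G$ with $H := G(|A|+|D|, |B|, |C|)$. The structural constraints of \Cref{prop:struct-almost-bip} (no $A$-$C$ or $B$-$D$ edges and $E(B, C)$ complete) imply
\[
e(G) \le \binom{|A|+|D|}{2} + |A||B| + |D||C| + |B||C|,
\]
while $e(H) = \binom{|A|+|D|}{2} + (|A|+|D|)|B| + |B||C|$ and $\t(H) = |B||C|$. Hence $e(H) - e(G) \ge |D|(|B| - |C|)$ and $\t(H) \le \t(G)$. If $\t(G) = |B||C|$, optimality is immediately contradicted by $H$ whenever $|D| \ge 1$. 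Otherwise, the excess $r := \t(G) - |B||C|$ consists entirely of non-triangular edges incident to $A \cup D$, and by \Cref{prop:struct-almost-bip} each such vertex has only $O(\sqrt{\myeta}\,n)$ non-triangular neighbours, so $r = O(\myeta\, n^2)$. I then pass to $H' := G(|A|+|D|-k, |B|, |C|+k)$ with $k = \lceil r/|B| \rceil = O(\myeta\,n)$, which yields $\t(H') \ge \t(G)$, while a short computation using $\binom{m-k}{2} - \binom{m}{2} = -\Theta(km)$ gives $e(H') = e(H) - O(\myeta^{3/2}\,n^2)$. Combined with $e(H) - e(G) \ge |D| \cdot \Omega(\sqrt{\myeta}\,n)$, this yields $e(H') \ge e(G)$ whenever $|D| \ge C\myeta\, n$ for a suitable constant $C$, again contradicting optimality. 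In the residual case $|D| \le C\myeta\, n$, I would apply \Cref{obs:two-vs} to each $v \in D$: since $\degNonTriangular(v) = O(\sqrt{\myeta}\,n)$ is dwarfed by $\degNonTriangular(u) \ge |C|$ for $u \in B$, the inequality $\deg(v) \ge \deg(u) - 1$ is forced for every $u \in B$, which, combined with $v \not\sim B$ and the sharp size bounds, pins $v$'s neighbourhood down to essentially that of a clique-vertex, leading to a local swap that improves $G$.

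Once $D = \emptyset$, it remains to upgrade the induced structure on $A \cup B \cup C$ into $G(|A|, |B|, |C|)$: namely $G[A]$ is a clique and $A$ is completely joined to $B$. Since $G[A]$ contains no non-triangular edges by \Cref{prop:struct-almost-bip}, each missing $A$-$A$ or $A$-$B$ edge can be introduced via an optimality swap that compensates by deleting a redundant triangular edge at one endpoint, thereby closing the gap. The main obstacle throughout is the residual small-$|D|$ case in the first step: the aggregate comparison with $G(a, b, c)$ becomes too coarse to overcome the $O(\myeta^{3/2} n^2)$ loss incurred by shrinking the clique, and one must descend to a per-vertex optimality argument finely tuned to the sharp size estimates of \Cref{prop:sizes-almost-bip}.
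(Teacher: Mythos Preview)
Your global comparison with $H = G(|A|+|D|,|B|,|C|)$ and then $H'$ is correct and does reduce to $|D| = O(\myeta n)$ (in fact a one-line application of \Cref{obs:two-vs}, as in the paper's proof, already gives the stronger $|D| = O(\myeta^{3/4}n)$: every vertex has degree at least $|B|-1$, and a vertex of $B$ cannot see $B$, hence misses only $O(\myeta^{3/4}n)$ vertices of $A\cup C\cup D$, so $|D|=O(\myeta^{3/4}n)$ since $B$ sends no edges to $D$). The real gap is the residual case and the final step, and they are linked.

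Your proposed per-vertex fix for small $|D|$ does not work in the order you state it. The natural swap --- replace two adjacent $v,w\in D$ by a new vertex joined to $A\cup B$ and a new vertex joined to $B$ --- is exactly what the paper uses, but it is only valid \emph{after} one knows that $A$ consists entirely of triangular vertices: otherwise the new vertex joined to $A\cup B$ provides a common neighbour for every $A$--$B$ pair and kills all non-triangular $A$--$B$ edges, of which there can be $\Theta(\myeta n^2)$, swamping the $|B|$ non-triangular edges gained. Your description ``pins $v$'s neighbourhood down to essentially that of a clique-vertex'' is also off: a vertex of $D$ is adjacent to almost all of $A\cup C\cup D$, which is the \emph{opposite} side from a clique vertex's neighbourhood $A\cup B$, so no direct swap presents itself.

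The paper therefore reverses your order. It first isolates the set $T\subseteq A$ of triangular vertices, shows (via \Cref{obs:two-vs} and the near-completeness of $B$ to $A$) that every $u\in S:=A\setminus T$ has only $O(\myeta^{3/4}n)$ neighbours in $A$, deduces $|T|\ge(\sqrt{2\myeta}-O(\myeta^{3/4}))n$ from the edge count in $G[A]$, uses optimality to make $T$ a clique with common outside neighbourhood, and then empties $S$ by swapping a pair of adjacent $S$-vertices for one new vertex in $T$ and one in $C$. Only after $A=T$ is secured does the swap above eliminate $D$. Your last paragraph has the same defect: adding a missing edge $aa'$ in $A$ can destroy every non-triangular edge $a'b$ with $b\sim a$, so ``introduce each missing edge via an optimality swap'' is not a proof; the triangular-vertex decomposition of $A$ is precisely the missing idea.
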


    \begin{proof} [ of \Cref{thm:almost-bipartite}]
        \Cref{thm:almost-bipartite} immediately follows from
        \Cref{prop:struct-almost-bip,prop:fiddling-very-close-bip,%
        prop:sizes-almost-bip,prop:fiddling-almost-bip}.
        The only minor technicality is that when we replace a graph with at most
        $(1/4 + \delta)n^2$ edges by an optimal graph, we may
        increase the number of edges and lose this condition. However,
        \Cref{lem:exchange} implies that the number of edges can increase by at
        most $O(n)$, so the condition is satisfied with a slightly relaxed
        value of $\delta$. 
    \end{proof}

    The rest of this section is devoted to the proofs of
    \Cref{prop:struct-almost-bip,prop:fiddling-very-close-bip,%
    prop:sizes-almost-bip,prop:fiddling-almost-bip},
    each of which is proved in a separate subsection.
    \subsection{Structure of an optimal graph}

        In this subsection, we prove \Cref{prop:struct-almost-bip} (see also
        \Cref{fig:struct-almost-bip}).

        \propStructAlmostBip*

        The first assertion follows fairly easily from the fact that the number
        of non-triangular edges is almost $n ^ 2 / 4$.  To complete the proof we
        use basic properties of optimal graphs.

        \begin{proof} [ of \Cref{prop:struct-almost-bip}]
            Let $H$ be the subgraph of $G$ whose edges are the non-triangular
            edges of $G$. We note that $H$ is a
            triangle-free graph with close to $n^2 / 4$ edges, which implies that
            $H$ is close to being a complete bipartite graph.
            This enables us to
            find independent (with respect to $G$) sets $U$ and $W$ of size
            almost $n / 2$ each, such that $H$ contains almost all of the
            possible edges between them. 

            \begin{claim} \label{claim:large-almost-complete-bipartite}
                There exist disjoint independent sets $U, W \subseteq V(G)$ such
                that every vertex in $U$ has at least $(1/2 - O(\myeta^{1/4}))n$
                non-triangular neighbours in $W$ and vice versa. In particular,
                $|U|, |W| \ge (1/2 - O(\myeta^{1/4}))n$.
            \end{claim}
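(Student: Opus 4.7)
The plan is to exploit that the spanning subgraph $H\subseteq G$ consisting of the non-triangular edges is itself triangle-free: any triangle in $H$ would be a triangle of $G$ all of whose edges are non-triangular, a contradiction. Combined with~(\ref{eqn:lower-bd-t-G-alt}), this makes $H$ a triangle-free graph on $n$ vertices with $e(H)=\t(G)\ge(1/4-O(\sqrt{\myeta}))n^2$ edges, i.e.\ very close to the Mantel extremum.

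My first step would be to invoke a quantitative stability form of Mantel's theorem (for instance via the triangle-free identity $\sum_v\deg_H(v)^2\le n\,e(H)$, which controls the degree variance) to obtain a bipartition $V(G)=V_1\cup V_2$ with $e(H[V_1])+e(H[V_2])\le n^2/4-e(H)=O(\sqrt{\myeta})\,n^2$. Combining the bound $e_H(V_1,V_2)\le |V_1||V_2|=n^2/4-(|V_1|-n/2)^2$ with $e_H(V_1,V_2)\ge(1/4-O(\sqrt{\myeta}))n^2$ then forces $|V_1|,|V_2|=n/2\pm O(\myeta^{1/4})n$.

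Next I would clean the bipartition. By averaging, at most $O(\myeta^{1/4})n$ vertices of $V_i$ have more than $\myeta^{1/4}n$ non-triangular neighbours inside $V_i$. Moreover, if $Y\subseteq V_1$ denotes the set of vertices $v$ with $|N_H(v)\cap V_2|<(1/2-2\myeta^{1/4})n$, then
\[
|Y|\bigl(|V_2|-(1/2-2\myeta^{1/4})n\bigr)\;\le\;|V_1||V_2|-e_H(V_1,V_2)\;=\;O(\sqrt{\myeta})\,n^2,
\]
and since $|V_2|\ge(1/2-O(\myeta^{1/4}))n$ this gives $|Y|=O(\myeta^{1/4})n$; the analogous bound holds on the other side. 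Let $U\subseteq V_1$ and $W\subseteq V_2$ be the sets that remain after deleting all these bad vertices. Then $|U|,|W|\ge(1/2-O(\myeta^{1/4}))n$ and, because only $O(\myeta^{1/4})n$ vertices are removed per side, every $u\in U$ satisfies $|N_H(u)\cap W|\ge(1/2-O(\myeta^{1/4}))n$, and symmetrically for $W$.

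Finally I would verify that $U$ and $W$ are independent in $G$. For any $u,u'\in U$, both have at least $(1/2-O(\myeta^{1/4}))n$ non-triangular neighbours in $W$ while $|W|\le(1/2+O(\myeta^{1/4}))n$, so inclusion--exclusion produces a common non-triangular neighbour $w\in W$; an edge $uu'\in E(G)$ would then make $\{u,u',w\}$ a triangle of $G$, forcing $uw$ to be triangular and contradicting $uw\in H$. The main technical obstacle is the stability step with the correct linear dependence on $\sqrt{\myeta}$ (rather than a weaker exponent); once that is in hand, the displayed inequality above is what allows the cleaning to propagate $\sqrt{\myeta}$ to $\myeta^{1/4}$ without further loss.
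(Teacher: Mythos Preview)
Your argument is correct, but it takes a different route from the paper's proof. You invoke an external stability theorem for Mantel's bound to obtain a near-balanced bipartition of $H$, then clean both sides and argue independence via a common-neighbour trick. The paper instead works entirely inside $H$: it first shows directly (using $e(H\setminus S)\le (n-|S|)^2/4$ for any vertex set $S$) that all but $O(\myeta^{1/4})n$ vertices of $H$ have $H$-degree at least $(1/2 - O(\myeta^{1/4}))n$, and then simply takes $U$ to be the high-degree vertices inside $N_H(u)$ for a fixed high-degree vertex $u$, and $W$ analogously inside $N_H(v)$ for some $v\in U$. Independence in $G$ is then immediate, since $N_H(u)$ is automatically $G$-independent.

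The paper's approach is more self-contained (no stability result needs to be quoted or proved) and slightly shorter; your approach is more in the spirit of standard extremal stability arguments and makes the near-bipartite structure of $H$ explicit from the outset. One minor remark: your first cleaning step, removing vertices with many $H$-neighbours inside their own part, is never used later---independence follows already from the cross-degree lower bound via your common-neighbour argument---so you could drop it. Also, the sentence about $\sum_v\deg_H(v)^2\le n\,e(H)$ controlling the degree variance is suggestive but does not by itself yield the bipartition with the linear defect bound you need; you should cite F\"uredi's stability theorem (or give the short max-degree-neighbourhood argument) explicitly at that point.
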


            \begin{proof}
                Inequality~(\ref{eqn:lower-bd-t-G-alt}) states that $e(H) =
                \t(G) \ge (1/4 - c\sqrt{\myeta})n^2$ for some absolute constant
                $c$. From this we deduce that there are at most $2d
                \myeta^{1/4}$ vertices in $H$ of degree smaller than $(1/2 -
                d\myeta^{1/4})n$ where $d = \sqrt{c}$.  Indeed, suppose that we
                can find a set $S$ consisting of exactly $2d \myeta^{1 /
                4} n$ vertices of degree
                smaller than \mbox{$(1/2 - d \myeta^{1/4})n$} in $H$.  Since $H$
                is triangle-free, $e(H \setminus S) \le (n - |S|)^2 / 4$. Hence,
                \begin{align*} e(H) \,
                    &< \, \frac{\left( n - |S| \right) ^ 2}{4} + |S| \left(
                    \frac{1}{2} - d\myeta^{1 / 4} \right)n \\
                    &= \, \frac{n ^ 2}{4} - |S| \left( d\myeta^{1 / 4} n -
                        \frac{|S|}{4} \right) \\
                    &= \, \left( \frac{1}{4} - d^2 \sqrt{\myeta} \right) n^2 \\
                    &= \, \left( \frac{1}{4} - c\sqrt{\myeta} \right) n^2,
                \end{align*}
                a contradiction.
                
                Let $u \in V(H)$ be any vertex with $\deg_H(u) \ge (1 / 2 - d 
                \myeta^{1 / 4})n$. Denote by $U$ the set of vertices in $N_H(u)$
                that have at least $(1 / 2 - d \myeta^{1 / 4})n$ neighbours in $H$.
                Since the edges of $H$ are non-triangular in $G$, it follows that
                $U$ is independent in $G$.  Moreover, $|U| \ge \deg_H(u) - 2d
                \myeta^{1/4}n \ge (1/2 - O(\myeta^{1/4}))n$.

                Now let $v \in U$ and denote by $W$ the set of vertices in
                $N_H(v)$ whose degree in $H$ is at least $(1/2 - d 
                \myeta^{1/4})n$. As before, $W$ is independent in $G$ and
                has size at least $(1/2 - O(\myeta^{1/4}))n$. Finally, every
                vertex in $U$ has at least $(1/2 - d \myeta^{1/4})n - (n - |U| -
                |W|) \ge (1/2 - O(\myeta^{1/4}))n$ non-triangular neighbours in
                $W$, and vice versa.
            \end{proof}

            Let $U$ and $W$ be the disjoint independent sets given
            \Cref{claim:large-almost-complete-bipartite}.  The following similar
            claim, allows us to enlarge $U$ and $W$ to obtain sets $B$ and $C$
            which will be shown to satisfy the requirements of
            \Cref{prop:struct-almost-bip}.

            \begin{claim} \label{claim:large-complete-bip}
                There exist disjoint independent sets $B, C \subseteq V(G)$,
                satisfying $U \subseteq B$ and $W \subseteq C$ and 
                 $|B \cup C| \ge (1 - O(\sqrt{\myeta}))n$, such that
                every vertex in $B$ has at least $2n / 5$ non-triangular
                neighbours in $C$ and vice versa.
            \end{claim}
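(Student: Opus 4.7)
The plan is to enlarge $U$ and $W$ by distributing the vertices of $T := V(G) \setminus (U \cup W)$ (which has size at most $O(\myeta^{1/4})n$) between them, discarding only a small residual set.

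We first show that every $v \in T$ has no neighbour in at least one of $U, W$. Let $H$ denote the subgraph of $G$ consisting of the non-triangular edges, and suppose $v$ has some neighbour $u \in U$. Then $v$ cannot be adjacent to any $w \in W$ with $uw \in E(H)$, for otherwise $uvw$ would be a triangle in $G$ certifying $uw$ as triangular. Since $|N_H(u) \cap W| \ge (1/2 - O(\myeta^{1/4}))n$ and $|W| \le (1/2 + O(\myeta^{1/4}))n$ (the latter from $|U| + |W| \le n$ combined with \Cref{claim:large-almost-complete-bipartite}), we conclude that $v$ has at most $O(\myeta^{1/4})n$ neighbours in $W$. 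Symmetrically, if $v$ also has a neighbour in $W$ then $v$ has at most $O(\myeta^{1/4})n$ neighbours in $U$, forcing $\deg_G(v) \le O(\myeta^{1/4})n + |T| = O(\myeta^{1/4})n$. On the other hand, $u$ satisfies $\deg_G(u), \degNonTriangular(u) \ge (1/2 - O(\myeta^{1/4}))n$, and $\degNonTriangular(v) \le \deg_G(v)$, so \Cref{obs:two-vs} applied to $u$ and $v$ gives a contradiction for small $\myeta$. Let $T_B \subseteq T$ consist of those vertices having no neighbour in $U$, and set $T_C := T \setminus T_B$; every vertex of $T_C$ then has no neighbour in $W$.

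Next we sharpen the degree count from \Cref{claim:large-almost-complete-bipartite}. Let $S$ be the set of vertices $v$ with $\degNonTriangular(v) < 9n/20$. Triangle-freeness of $H$ yields $e(H) \le (n - |S|)^2/4 + |S|(1/2 - 1/20)n$, and combining this with $e(H) = \t(G) \ge (1/4 - O(\sqrt{\myeta}))n^2$ from Inequality~(\ref{eqn:lower-bd-t-G-alt}) gives $|S| = O(\sqrt{\myeta})n$. Moreover $S \cap (U \cup W) = \emptyset$ for small $\myeta$, as every vertex of $U \cup W$ has non-triangular degree at least $(1/2 - O(\myeta^{1/4}))n > 9n/20$.

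Finally, take $B := U \cup (T_B \setminus S)$ and $C := W \cup (T_C \setminus S)$. The inclusions $U \subseteq B$, $W \subseteq C$ are immediate, and $B$, $C$ are disjoint and independent in $G$ by construction. The size bound $|B \cup C| = n - |S| \ge (1 - O(\sqrt{\myeta}))n$ is immediate. For the degree condition, any $u \in U$ retains at least $(1/2 - O(\myeta^{1/4}))n \ge 2n/5$ non-triangular neighbours in $W \subseteq C$, and any $v \in T_B \setminus S$ satisfies $\degNonTriangular(v) \ge 9n/20$; since all its non-triangular neighbours lie in $V(G) \setminus U$, at most $|T| = O(\myeta^{1/4})n$ of them can fall outside $C$, leaving at least $9n/20 - O(\myeta^{1/4})n \ge 2n/5$ non-triangular neighbours in $C$. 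The argument for $C$ is symmetric. The main technical step is sharpening the threshold from $\myeta^{1/4}$ (as in \Cref{claim:large-almost-complete-bipartite}) to a constant, which reduces $|S|$ from $O(\myeta^{1/4})n$ down to $O(\sqrt{\myeta})n$; the remainder is bookkeeping.
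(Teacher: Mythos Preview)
Your argument is essentially correct and follows the same route as the paper: bound the number of low--non-triangular-degree vertices by $O(\sqrt{\myeta})n$, split the remaining vertices of $V(G)\setminus(U\cup W)$ according to which of $U,W$ they avoid, and adjoin them to $U$ and $W$. One pleasant difference is that you invoke optimality (via \Cref{obs:two-vs}) to rule out any $v\in T$ having neighbours in both $U$ and $W$, whereas the paper argues this only for vertices of high $H$-degree using the degree bound directly; either works since $G$ is optimal throughout the section.

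There is, however, a genuine gap: the assertion that $B$ and $C$ are independent ``by construction'' is not justified. Your construction guarantees there are no $U$--$T_B$ edges and no $W$--$T_C$ edges, and $U,W$ are independent, but nothing so far rules out edges \emph{within} $T_B\setminus S$ (or within $T_C\setminus S$). The fix is the standard shared-non-triangular-neighbour argument, which the paper spells out: any $v\in T_B\setminus S$ has at least $9n/20 - |T| \ge 9n/20 - O(\myeta^{1/4})n$ non-triangular neighbours in $W$, and since $|W|\le (1/2+O(\myeta^{1/4}))n$, any two such vertices share a non-triangular neighbour in $W$ and hence cannot be adjacent in $G$. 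The same reasoning applies to $T_C\setminus S$ with $U$ in place of $W$. Once you insert this, the proof is complete.
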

            
            \begin{proof}
                We first show that there are at most $O(\sqrt{\myeta} n)$
                vertices of degree at most $21n / 50$ in $H$. To this end we
                recall Inequality~(\ref{eqn:lower-bd-t-G-alt}), which states
                that $e(H) = \t(G) \ge (1/4 - c \sqrt{\myeta})n$ for some
                absolute constant $c$. Importantly, this constant does not
                depend on $\delta$, so we may choose $\delta$ to satisfy $c
                \sqrt\delta \le 1/100$.  Recall that $\delta$ is an upper bound
                for $\myeta$, so we have $c \sqrt\myeta \le 1/100$.
                
                Suppose that $S$ is a set consisting of exactly $25c\sqrt\myeta
                n$ vertices of degree at most $21n / 50$ in $H$.  Then, similarly
                to the previous claim,
                \begin{align*}
                    e(H) &\le  \frac{\left(n - |S|\right)^2}{4} +
                                 |S|\frac{21n}{50} \\
                         &= \frac{n^2}{4} - |S| \left(\frac{4n}{25} -
                                 \frac{|S|}{4} \right) \\
                         &< \left( \frac{1}{4} - 2c \sqrt\myeta \right) n^2,
                \end{align*}
                a contradiction to Inequality~(\ref{eqn:lower-bd-t-G-alt}).
                Therefore, there are at most $O(\sqrt\myeta n)$ vertices with
                degree at most $21n / 50$ in $H$.
                
                Recall that every vertex in $U$ has at least $(1/2 -
                O(\myeta^{1/4}))n \ge 2n/5$ non-triangular neighbours in $W$ and
                vice versa. Here we implicitly assume that $\delta$ is small
                enough to make this inequality true, and we shall do so
                throughout this proof.

                Denote by $X$ the set of vertices in $V(G) \setminus (U \cup W)$
                whose degree in $H$ is at least $21n/50$. We note that no vertex
                in $X$ has neighbours in both $U$ and $W$. Indeed, suppose
                that $v \in X$ is adjacent to $u \in U$ and $w \in W$. Since $v$
                is not adjacent to any non-triangular neighbour of either $u$ or
                $w$, it has at most $O(\myeta^{1/4}n)$ neighbours in $U$ and at
                most $O(\myeta^{1/4}n)$ neighbours in $W$, implying that
                $\deg_H(v) \le O(\myeta^{1/4}n)$, a contradiction to the
                assumption that $\deg_H(v) \ge 21n/50$.

                Let $Y$ be the set of vertices in $X$ that are adjacent to
                vertices in $U$ and, similarly, let $Z$ be the set of vertices
                in $X$ that have neighbours in $W$.  Then every vertex in $Y$
                has at least $21/50 n - O(\myeta^{1/4} n) \ge 2n/5$
                non-triangular neighbours in $U$ and no neighbours in $W$. In
                particular, since $|U| \le n - |W| < 4n/5$, any two vertices in
                $Y$ share a non-triangular neighbour in $U$, hence $Y$ is an
                independent set in $G$. Denote $B = Y \cup W$ and $C = Z \cup
                U$.
                Then $B$ and $C$ are independent sets, such that every vertex in
                $B$ has at least $2n/5$ non-triangular neighbours in $C$, and
                vice versa. Furthermore, $|B \cup C| = |V(G)
                \setminus X| \ge (1 - O(\sqrt\myeta))n$, so the proof of
                \Cref{claim:large-complete-bip} is complete. 
            \end{proof}

            We can now finish the proof of \Cref{prop:struct-almost-bip}.  Let
            $B$ and $C$ be as in \Cref{claim:large-complete-bip}.  Since every
            vertex in $B \cup C$ has at least $2n / 5$ non-triangular
            neighbours, it follows from \Cref{obs:two-vs} and the assumption
            that $G$ is optimal that every vertex in $G$ has degree at least $2n
            / 5 - 1$.  We conclude (similarly to the proof of
            \Cref{claim:large-complete-bip}) that no vertex in $G$ has neighbours in both
            $B$ and $C$. Indeed, suppose that some $v \in V(G)$ is adjacent to
            some $u \in B$ and $w \in C$. Since $u$ has at least $2n/5$
            non-triangular neighbours in $C$, $v$ is adjacent to at most $|C| -
            2n/5$ vertices in $C$ and, similarly, to at most
            $|B| - 2n/5$ vertices in $B$. It follows that $u$ has degree at most
            $n/5$,
            a contradiction.

            Since no vertex in $G$ is adjacent to a vertex in $B$ and a vertex
            in $C$, we may add all missing edges between $B$ and $C$ without
            creating new triangles. However, $G$ is an optimal graph, so in fact
            all edges between $B$ and $C$ are present in $G$. Again, since
            vertices in $B$ and $C$ do not have common neighbours, all edges
            between $B$ and $C$ are non-triangular.

            We may assume that $|B| \ge |C|$. Then $|B| \ge (1 / 2 -
            O(\sqrtEta))n$ and hence every vertex in $C$ has non-triangular
            degree at least $(1 / 2 - O(\sqrtEta))n$. Again, by
            \Cref{obs:two-vs}, every vertex in $G$ has degree at least $(1 / 2 -
            O(\sqrtEta))n$. Since $B$ is an independent set, it follows that $n
            - |B| \ge (1/2 - O(\sqrt\myeta))n$.  Therefore $|C| = |B \cup C| -
            |B| \ge (1/2 - O(\sqrt\myeta))n$.

            We are now done with the first assertion of
            \Cref{prop:struct-almost-bip}, and the remaining ones follow 
            easily.  Let $A$ be the set of vertices outside of $B \cup C$ that
            are adjacent to a vertex in $B$ and, similarly, let $D$ be the set
            of vertices outside of $B \cup C$ that have a neighbour in $C$.
            Then $\{A, B, C, D\}$ forms a partition of $G$, because a vertex
            without neighbours in $B \cup C$ would have too small a degree. This
            establishes the second assertion.

            To prove the third assertion, we may assume that every vertex in $A$
            has a neighbour in $A$: if some $u \in A$ has no neighbours in $A$,
            then we may add all edges between $u$ and the vertices in $B$
            without creating new triangles and then reassign $u$ to $C$.
            Similarly, we may assume that every vertex in $D$ has a neighbour in
            $D$.

            By inspecting the degrees, any two vertices in $A$ have a common
            neighbour in $B$. Therefore there cannot be any non-triangular edges
            with both ends in $A$ or, similarly, with both ends in $D$.  It
            remains to conclude that every vertex in $A \cup D$ is incident with
            at most $O(\sqrt\myeta n)$ non-triangular edges. Let $u \in A$ and
            let $v \in A$ by a neighbour of $u$. Since $u$ and $v$ have
            neighbours only in $A \cup D \cup B$ and the degree of $v$ is at least
            $(1/2 - O(\sqrt\myeta))n$, it follows that $u$ has at most $|A \cup
            D \cup B| - (1/2 - O(\sqrt\myeta))n = O(\sqrt\myeta n)$
            non-triangular neighbours. The same holds for any vertex
            in $D$. This establishes the fourth assertion and completes the proof
            of \Cref{prop:struct-almost-bip}.
        \end{proof}
        
    \subsection{Completing the proof if $\myeta n$ is small}

        We now prove \Cref{prop:fiddling-very-close-bip}, which
        completes the proof of \Cref{thm:almost-bipartite} in case 
        $\myeta$ is small.

        \propFiddlingVeryCloseBip*

        \begin{proof}
            It follows from the assumptions on the sets $A, B, C, D$ that $|A
            \cup D| = O(\sqrt\myeta n)$ and that each vertex in $A \cup D$ is
            incident with at most $O(\sqrt\myeta n)$ non-triangular edges.
            Therefore the number of non-triangular edges with an end in $A \cup
            D$ is $O(\myeta n^2) = O(\kappa n)$. We show that, in fact, there
            are no such edges.
            
            Suppose that $uv$ is a non-triangular edge with $u \in A \cup D$.
            Without loss of generality, we may assume that $u \in A$ and $v \in
            B \cup D$. Observe that the neighbours of $u$ are not adjacent to
            $v$. Let $G'$ be the graph obtained by adding the edges between $v$
            and the neighbours of $u$ in $A$, removing the edges between $u$ and
            $A \setminus \{u\}$ and also adding all missing edges between $u$
            and $B$. Then $e(G') \ge e(G)$ and $\t(G') \ge \t(G) + |B| -
            O(\kappa n) > \t(G)$, where the last inequality holds provided that
            we choose $\kappa$ small enough.  However, this contradicts the
            optimality of $G$, so there cannot be
            such an edge $uv$.

            It is now easy to finish the proof. By what we have just proved, all
            the missing edges with both ends in $A \cup D$ may be added without
            causing a non-triangular edge to become triangular, i.e., $A \cup D$
            is a clique.  We may assume that $|B| \ge |C|$. Remove the edges
            between $D$ and $C$ and add all possible edges between $D$ and $B$.
            The result is a graph which is isomorphic to $G(|A \cup D|, |B|,
            |C|)$, with at least as many edges and non-triangular edges as $G$.
        \end{proof}

    \subsection{Sizes of $A, B, C, D$}

        In this subsection we prepare for the proof of
        \Cref{thm:almost-bipartite} in the case where $\myeta n$ is large. In
        particular, we obtain good bounds for the sizes of the sets $A \cup
        D$, $B$ and $C$.

        \propSizesAlmostBip*

        The proof is fairly technical, and its main tool is the lower bound on
        $\t(G)$ from Inequality (\ref{eqn:lower-bd-t-G}).

        \begin{proof}
            Denote $a = |A \cup D|$, $b = |B|$ and $c = |C|$ and write
            \begin{align*}
                a &= \left( \sqrt{2\myeta} + \alpha \right) n, \\
                b &= \left( \frac{1}{2} - \beta \right) n, \\
                c &= \left( \frac{1}{2} - \sqrt{2\myeta} + \beta - \alpha
                    \right)n,
            \end{align*}
            where the quantities $\alpha$ and $\beta$ are defined by these
            identities. We cannot assume that $\alpha$ and $\beta$ are positive, but
            we have $-\sqrt{2\myeta} \le \alpha \le O(\sqrt\myeta)$, where the
            second inequality comes from \Cref{prop:struct-almost-bip}.  Since
            there are at most $O(\myeta n^2)$ non-triangular edges with an
            end in $A \cup D$, we have 
            \begin{align*}
                \t(G) \, &\le \, bc + O(\myeta n ^ 2) \\
                         &\le \, \frac{(n-a)^2}{4} + O(\myeta n^2) \\
                         &\le \, \frac{n^2}{4} - \frac{an}{2} + O(\myeta n^2).
            \end{align*}
            Combining this with Inequality~(\ref{eqn:lower-bd-t-G}), which
            states that $\t(G) \ge \left(1/4 - \sqrt{\myeta/2} - O(1/n)\right)n^2$ , we get
            \begin{equation*}
                \frac{1}{4} - \sqrt{\frac{\eps}{2}} - O_\myK(\myeta)
                \; \le \; \frac{\t(G)}{n^2} \; \le \;
                \frac{1}{4} - \frac{(\sqrt{2\myeta} + \alpha)}{2} +
                    O(\myeta).
            \end{equation*}
            Therefore $\alpha = O_\myK(\myeta)$. Using the fact that $b
            \ge c$ and that any vertex in $A \cup D$ sends edges to only one of
            $B$ and $C$, we obtain the following upper bound on the number of
            edges in $G$.
            \begin{align*}
                e(G) &\le b(n - b) + a^2 / 2.
            \end{align*}
            Combining this with the definition $e(G) = (1/4 + \eps)n^2$, we get
            \begin{align*}
                1/4 + \myeta \, 
                &\le \, \left(1/2 - \beta \right)
                \left(1/2 + \beta \right) 
                + \frac{\left( \sqrt{2\myeta} + \alpha \right)^2}{2} \\
                &= \,1/4 - \beta^2 + \myeta + \alpha(\sqrt{2 \myeta} + \alpha/2)
            \end{align*}
            It follows that $\beta \le \alpha(\sqrt{2 \myeta} + \alpha/2)$. In
            particular, $\alpha \ge 0$ and $\beta = O_\myK(\myeta^{3/4})$,
            implying that the assertions of
            \Cref{prop:sizes-almost-bip} hold.
        \end{proof}

    \subsection{Completing the proof if $\myeta n$ is large}

        We are now able to complete the proof of \Cref{thm:almost-bipartite}
        under the assumption that $\myeta \ge \myK / n$ for some absolute
        constant $\myK > 0$.

        \propFiddlingAlmostBip*

        The proof consists of two stages. In the first stage we use the bounds
        from \Cref{prop:sizes-almost-bip} to conclude that $D$ is very small and
        that the number of vertices in $A$ which are adjacent to a
        non-triangular edges is small. In the second stage, we show that if $D$
        is non-empty or if there is a vertex in $A$ with a non-triangular
        neighbour then $G$ can be manipulated to obtain a graph with more edges
        and more non-triangular edges, contradicting the assumption that $G$ is
        optimal. It follows that $G$ is isomorphic to a graph $G(a, b, c)$.

        \begin{proof}[ of \Cref{prop:fiddling-almost-bip}]
            We start by showing that the edges between $B \cup D$ and $A \cup C$
            form an almost complete bipartite subgraph.
            We shall be using the estimates on the size of the sets $A \cup D$,
            $B$ and $C$ from \Cref{prop:sizes-almost-bip}. Note that $\myK$ is an
            absolute constant (determined implicitly in
            \Cref{prop:fiddling-very-close-bip}). Thus we may remove the
            dependence on $\myK$ in the estimates of these sizes.
            \begin{claim} \label{claim:B-A-almost-complete}
                Every vertex in $B \cup D$ is adjacent to all but
                $O(\myeta ^ {3 / 4}n)$ vertices in $A \cup C$.
                Furthermore, $|D| = O(\myeta ^ {3 / 4} n)$.
            \end{claim}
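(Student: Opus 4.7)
The plan is to combine a global edge count (giving the bound on $|D|$) with a pointwise application of \Cref{obs:two-vs} (giving the per-vertex degree bound). The main subtlety lies in the case $v \in B$, where both alternatives of the observation must be ruled out simultaneously by carefully accounting for the non-triangular degree of $v$.

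For the $|D|$ bound, I would use the decomposition
\[ e(G) = |B||C| + e(A, B) + e(C, D) + e(A \cup D) \]
and upper-bound the three rightmost terms by their maxima $|A||B|$, $|C||D|$, and $\binom{|A|+|D|}{2}$. Writing $M$ for the total number of ``missing'' edges and substituting the size estimates of \Cref{prop:sizes-almost-bip} together with $e(G) = (1/4 + \myeta)n^2$, a short computation should yield
\[ M + \sqrt{2 \myeta}\, n |D| \;\le\; O(\myeta^{3/2} n^2) + O(\myeta^{3/4} n) \cdot |D|. \]
For $\myeta$ small enough (ensured by a small choice of $\delta$), the left-hand side dominates and forces $|D| = O(\myeta n) \subseteq O(\myeta^{3/4} n)$.

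For the per-vertex bound, fix any $u \in C$. Since all $B$--$C$ edges are non-triangular and $u$ is adjacent to all of $B$ (by \Cref{prop:struct-almost-bip}), both $\deg(u) \ge |B|$ and $\degNonTriangular(u) \ge |B|$. I then apply \Cref{obs:two-vs} to the pair $(v, u)$. For $v \in D$, the bound $\degNonTriangular(v) = O(\sqrtEta n) \ll |B|$ from \Cref{prop:struct-almost-bip} makes the non-triangular-degree alternative fail automatically, so $\deg(v) \ge |B| - 1$, and hence $\deg_{A \cup C}(v) \ge |B| - |D|$, leaving at most $n - 2|B|$ non-neighbours in $A \cup C$. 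For $v \in B$, write $\degNonTriangular(v) = |C| + t$, where $t \le \deg_A(v)$ counts non-triangular $v$--$A$ edges; if $\deg_A(v) < |B| - |C| - 1$, then both $\deg(v) = |C| + \deg_A(v) < |B| - 1$ and $\degNonTriangular(v) \le |C| + \deg_A(v) < |B| - 1$, violating \Cref{obs:two-vs}. Hence $\deg_A(v) \ge |B| - |C| - 1$, leaving at most $|A| - |B| + |C| + 1$ non-neighbours in $A \cup C$. In both cases, combining \Cref{prop:sizes-almost-bip} with the preliminary bound $|D| = O(\myeta^{3/4} n)$ shows that the number of non-neighbours is $O(\myeta^{3/4} n)$.
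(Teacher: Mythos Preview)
Your argument is correct, but it is considerably more work than the paper's. The key observation you miss is the trivial inequality $\deg(v) \ge \degNonTriangular(v)$. Once you fix $u \in C$ (so $\degNonTriangular(u) \ge |B|$), \Cref{obs:two-vs} says that for \emph{every} vertex $v$ either $\deg(v) \ge \deg(u)-1 \ge |B|-1$ or $\degNonTriangular(v) \ge |B|-1$; in the second case $\deg(v) \ge \degNonTriangular(v) \ge |B|-1$ as well. So every vertex of $G$ has degree at least $|B|-1$, with no case split on whether $v$ lies in $B$ or in $D$ and no need to track non-triangular $B$--$A$ edges. Since vertices of $B \cup D$ have no neighbours in $B$, they miss at most $n - 2|B| + 1 = O(\myeta^{3/4} n)$ vertices of $A \cup C \cup D$, and since vertices of $B$ have no neighbours in $D$, this already forces $|D| = O(\myeta^{3/4} n)$.

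In particular, the paper never does your separate edge-count for $|D|$: the bound on $|D|$ falls out of the degree bound for free. Your edge-counting route does work and in fact yields the stronger estimate $|D| = O(\myeta n)$, but it is not needed here (and note that your final sentence slightly overstates the dependence: the per-vertex bound does not actually require the preliminary $|D|$ estimate, since $|D|$ cancels in the count $|A \cup C| - (|B| - 1 - |D|) = n - 2|B| + 1$).
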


            \begin{proof}
                The non-triangular degree of any vertex in $C$ is at least
                $|B|$. Hence by \Cref{obs:two-vs} every vertex in $G$ has degree
                at least $|B| - 1$. The vertices in $B \cup D$ are not adjacent
                to any vertex in $B$. Since $|B| \ge (1 / 2 - O(\myeta ^ {3 /
                4}))n$, it follows that every vertex in $B \cup D$ is adjacent
                to all but $O(\myeta ^ {3 / 4} n)$ vertices in $V(G) \setminus B
                = A \cup C \cup D$.  Since there are no edges between $B$
                and $D$,  $|D| = O(\myeta ^ {3 / 4} n)$.
            \end{proof}

            Denote by $T$ the set of \triangularVs{} vertices in $A$ (recall
            that a \triangularVs{} vertex is incident only with
            triangular edges) and let $S
            = A \setminus T$. We show that the vertices in $S$ have few
            neighbours in $A$.

            \begin{claim} \label{claim:X-vs-small-deg}
                Every vertex in $S$ has $O(\myeta^{3/4}n)$ neighbours in
                $A$.
            \end{claim}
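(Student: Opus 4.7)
The plan is to pick any $u \in S$ and exhibit a non-triangular neighbour $v$ of $u$ which lies in $B \cup D$; then use \Cref{claim:B-A-almost-complete} to conclude that $v$ is non-adjacent to only $O(\myeta^{3/4}n)$ vertices of $A$, and deduce that $u$ has that few neighbours in $A$ since $u$ and $v$ can share no common neighbour.

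In more detail, by definition of $S$, the vertex $u$ is not \triangularVs{}, so it has at least one non-triangular neighbour $v$. I first locate $v$ inside the partition $\{A,B,C,D\}$. By the fourth assertion of \Cref{prop:struct-almost-bip}, the set $A$ does not span any non-triangular edge, so $v \notin A$; and by the second assertion there are no edges between $A$ and $C$ at all, so $v \notin C$. Hence $v \in B \cup D$.

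Now I invoke \Cref{claim:B-A-almost-complete}, which says that every vertex in $B \cup D$ is adjacent to all but $O(\myeta^{3/4}n)$ of the vertices of $A \cup C$. In particular, $v$ has at most $O(\myeta^{3/4}n)$ non-neighbours in $A$. Since $uv$ is non-triangular, $u$ and $v$ have no common neighbour in $G$, so every neighbour of $u$ in $A$ must be a non-neighbour of $v$ in $A$. Combining these two observations, $u$ has at most $O(\myeta^{3/4}n)$ neighbours in $A$, as required.

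There is no serious obstacle here: the work has already been done in setting up the partition and in \Cref{claim:B-A-almost-complete}; the only thing to be careful about is ruling out the possibility that the non-triangular neighbour of $u$ lies in $A$ or $C$, which is immediate from the structural properties of \Cref{prop:struct-almost-bip}.
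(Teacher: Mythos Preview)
Your proof is correct and follows exactly the same route as the paper: pick a non-triangular neighbour $v$ of $u$, argue $v\in B\cup D$ via the structural assertions of \Cref{prop:struct-almost-bip}, apply \Cref{claim:B-A-almost-complete} to bound the non-neighbours of $v$ in $A$, and conclude since $u$ and $v$ share no common neighbour.
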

            
            \begin{proof}
                Let $u \in S$ and let $v$ be a non-triangular neighbour of $u$.
                Then $v \in B \cup D$, because there are no edges between $A$
                and $C$, and there are no non-triangular edges with both ends in
                $A$. Recall that by \Cref{claim:B-A-almost-complete}, $v$ is
                adjacent to all but $O(\myeta^{3/4} n)$ vertices in $A$.
                Since $uv$ is non-triangular, $u$ and $v$ have no common
                neighbours, implying that $u$ has $O(\myeta ^ {3 / 4}
                n)$ neighbours in $A$.
            \end{proof}
            
            We conclude that almost all of the vertices in $A$ are also in $T$.

            \begin{claim} \label{claim:A'-almost-equals-A}
                $|T| \ge (\sqrt{2\myeta} - O(\myeta^{3/4}))n^2$.
            \end{claim}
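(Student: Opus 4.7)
The claim asserts $|T| \ge (\sqrt{2\myeta} - O(\myeta^{3/4}))n$ (reading the $n^2$ in the displayed inequality as a typo for $n$, since $|T| \le n$). Combining \Cref{prop:sizes-almost-bip} with $|D| = O(\myeta^{3/4}n)$ from \Cref{claim:B-A-almost-complete} gives $|A| = \sqrt{2\myeta}n + O(\myeta^{3/4}n)$, so it suffices to show $|S| = |A| - |T| = O(\myeta^{3/4}n)$.

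My plan is a two-sided count of the non-edges of $G[A]$. On the one hand, \Cref{claim:X-vs-small-deg} says each $u \in S$ has at most $O(\myeta^{3/4}n)$ neighbours in $A$, hence at least $\sqrt{2\myeta}n - O(\myeta^{3/4}n)$ non-neighbours in $A$; summing over $S$ (with each non-edge counted at most twice) gives
\begin{equation*}
    \binom{|A|}{2} - e(G[A]) \, \ge \, \tfrac{1}{2}|S|\bigl(\sqrt{2\myeta}n - O(\myeta^{3/4}n)\bigr).
\end{equation*}
On the other hand, I would lower-bound $e(G[A])$ by peeling the cross-edges off $e(G) = (1/4 + \myeta)n^2$: the three $D$-incident contributions $e(G[D]), e(A,D), e(C,D)$ are each $O(\myeta^{3/4}n^2)$, $e(B,C) = bc$, $e(A,B) \le |A||B|$, and $|A||B| + bc = |B|(n - |B| - |D|) \le n^2/4$ by AM--GM. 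This yields $e(G[A]) \ge \myeta n^2 - O(\myeta^{3/4}n^2)$, and since $\binom{|A|}{2} = \myeta n^2 + O(\myeta^{5/4}n^2)$, the total number of non-edges in $G[A]$ is $O(\myeta^{3/4}n^2)$. Combining the two sides immediately gives $|S| \le O(\myeta^{1/4}n)$.

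This is weaker than the stated $O(\myeta^{3/4}n)$ by a factor of $\sqrt{\myeta}$, and closing this gap is the main obstacle. A natural route is to sharpen the upper bound on the non-edges of $G[A]$ by improving the estimate of $e(A,B)$: the degree argument that every $w \in A$ satisfies $\deg(w) \ge n/2 - O(\myeta^{3/4}n)$ (obtained via \Cref{obs:two-vs} applied to a vertex of $B$ with non-triangular degree $\ge |C|$) shows that the missing $A$-to-$B$ edges localize at vertices of $T$ (with degree slack $O(\sqrt{\myeta}n)$) rather than at $S$-vertices (slack $O(\myeta^{3/4}n)$). A careful vertex-by-vertex accounting of $e(A,B)$ using this distinction should pull the total non-edge count down to $O(\myeta^{5/4}n^2)$, yielding $|S| \le O(\myeta^{3/4}n)$. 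Alternatively, one can appeal to the optimality of $G$: moving an $S$-vertex to mimic a $C$-vertex changes $(e(G), t(G))$ in a controlled way, so too large an $|S|$ should let us increase $\t(G)$ without decreasing $e(G)$, contradicting optimality.
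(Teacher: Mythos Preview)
Your main line of argument reaches only $|S| = O(\myeta^{1/4}n)$, and the proposed patches do not close the gap. The bottleneck is not $e(A,B)$ but the term $e(C,D)$, which you bound by $|C|\,|D| = O(\myeta^{3/4}n^2)$; this alone forces your lower bound for $e(G[A])$ down to $\myeta n^2 - O(\myeta^{3/4}n^2)$, and refining the $e(A,B)$ accounting cannot compensate for it (you need a smaller \emph{upper} bound on the cross-edges, not a better lower bound on $e(A,B)$). The optimality-based alternative is not fleshed out and would require a separate exchange argument.

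The actual fix is one line earlier than where you are looking: instead of peeling off the cross-edges pair by pair, observe that deleting the edges inside $A$ and inside $D$ leaves a \emph{bipartite} graph with parts $A \cup C$ and $B \cup D$ (since $B$ and $C$ are independent and there are no $A$--$C$ or $B$--$D$ edges). Hence $e(G) - e(G[A]) - e(G[D]) \le n^2/4$, giving
\[
    e(G[A]) \;\ge\; \myeta n^2 - e(G[D]) \;\ge\; (\myeta - O(\myeta^{3/2}))\,n^2.
\]
With this sharper bound your non-edge count in $G[A]$ drops to $\binom{|A|}{2} - e(G[A]) = O(\myeta^{5/4}n^2)$, and your inequality $\tfrac{1}{2}|S|\cdot\Omega(\sqrt{\myeta}\,n) \le O(\myeta^{5/4}n^2)$ yields $|S| = O(\myeta^{3/4}n)$ as desired.

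The paper in fact bypasses $|S|$ altogether. Once $e(G[A]) \ge (\myeta - O(\myeta^{3/2}))n^2$ is established via the bipartite observation, note that every edge of $G[A]$ not lying in $G[T]$ has an endpoint in $S$; by \Cref{claim:X-vs-small-deg} there are at most $|S|\cdot O(\myeta^{3/4}n) \le |A|\cdot O(\myeta^{3/4}n) = O(\myeta^{5/4}n^2)$ such edges. Thus $e(G[T]) \ge (\myeta - O(\myeta^{5/4}))n^2$, and since $e(G[T]) \le \binom{|T|}{2}$ this gives $|T| \ge (\sqrt{2\myeta} - O(\myeta^{3/4}))n$ directly.
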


            \begin{proof}
                By removing the edges with both ends in $A$ or in $D$ from $G$,
                we remain with a bipartite graph, so $(1/4 + \myeta)n^2 =
                e(G) \le n^2/4 + e(G[A]) + e(G[D])$. Since $|D| =
                O(\myeta^{3/4}n)$, we have $e(G[D]) = O(\myeta^{3/2}n^2)$, and
                hence $e(G[A]) \ge (\myeta - O(\myeta^{3/2}))n^2$.

                \Cref{claim:X-vs-small-deg} implies that $e(G[A]) - e(G[T]) \le
                O(|S| \myeta^{3/4}n) = O(|A| \myeta^{3/4}n) =
                O(\myeta^{5/4}n^2)$, where the rightmost inequality is a
                consequence of \Cref{prop:sizes-almost-bip}.  Therefore,
                $e(G[T]) \ge (\myeta - O(\myeta^{5/4})) n^2$, and so $|T| \ge
                (\sqrt{2 \myeta} - O(\myeta^{3/4}))n$, as required.
            \end{proof}
            
            Since $G$ is optimal, we may assume that $T$ induces a clique
            (because the addition of edges to $T$ does not cause a
            non-triangular edge to become triangular). Furthermore, we may
            assume that all vertices in $T$ have the same neighbourhood outside
            of $T$. Indeed, let $u$ be a vertex with largest degree among
            ehe vertices in $T$. We replace $G$ by the graph obtained
            by removing all edges between $T$ and $V(G) \setminus T$, and adding
            all edges between $T$ and the neighbourhood of $u$ (in $G$) outside
            of $T$. This modification does not decrease the number of edges or
            non-triangular edges in $G$.
            
            In particular, if
            a vertex $v \in S$ is adjacent to a vertex in $T$, then
            it is adjacent to all vertices in $T$. However, this is
            impossible since by \Cref{claim:X-vs-small-deg} $v$ has at
            most $O(\myeta^{3/4}n)$ neighbours in $A$, while
            by \Cref{claim:A'-almost-equals-A} there are at least
            $\Omega(\myeta^{1/2}n)$ vertices in $T$. Therefore there are no
            edges between $T$ and $S$.

            In the following claim we deduce that, in fact, the set $S$ is
            empty. The key observation is that a pair of adjacent vertices in
            $S$ can be replaced by one vertex in $C$ and one in $T$, increasing
            both the number of edges and the number of non-triangular edges.
            
            \begin{claim} \label{claim:X-empty}
               The set $S$ is empty.
            \end{claim}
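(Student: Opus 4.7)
The plan is to proceed by contradiction: assuming $S \ne \emptyset$, I will produce a modification $G'$ of $G$ with $e(G') \ge e(G)$ and $t(G') > t(G)$, contradicting optimality.

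First I would locate an edge inside $S$. By construction of $A$ every vertex of $A$ has a neighbour in $A$, and by the fact, proved just above, that no edge joins $T$ to $S$, every vertex in $S$ must have a neighbour inside $S$; pick any adjacent $u, v \in S$. I would then form $G'$ from $G$ by deleting $u, v$ and inserting two new vertices: $u'$ adjacent exactly to $B$ (playing the role of a vertex of $C$), and $v'$ adjacent exactly to $T \cup N$, where $N$ is the common external neighbourhood of $T$ (so $v'$ is a clone of a $T$-vertex).

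The heart of the argument consists of two counts. For edges,
\[
e(G') - e(G) = |B| + |T| + |N| + 1 - \deg(u) - \deg(v).
\]
The bound $\deg(u), \deg(v) \le |B| + O(\myeta^{3/4}n)$ follows from $N(u), N(v) \subseteq S \cup B \cup D$ together with $|S|, |D| = O(\myeta^{3/4}n)$ (via \Cref{claim:A'-almost-equals-A} and \Cref{claim:B-A-almost-complete}). Applying \Cref{obs:two-vs} to a vertex of $T$ (non-triangular degree $0$) and a vertex of $C$ (non-triangular degree $|B|$) gives $|T| + |N| \ge |B|$. Combining these with a case split on whether $\deg(u), \deg(v) < |B|$ yields $e(G') \ge e(G)$. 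For non-triangular edges, the $|B|$ new $u'$-to-$B$ edges are non-triangular (since $u'$'s neighbours lie in the independent set $B$), $v'$ contributes no non-triangular edges (it is a clone of a $T$-vertex), and the $t_u + t_v \le O(\sqrtEta n)$ removed non-triangular edges are controlled by \Cref{prop:struct-almost-bip}; edges not incident to $\{u, v\}$ can only become non-triangular after the deletion of $u, v$, contributing non-negatively. Hence $t(G') - t(G) \ge |B| - O(\sqrtEta n) = \Omega(n) > 0$.

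I expect the main obstacle to be the tight edge count in the borderline regime $\deg(u), \deg(v) \ge |B|$, where one must squeeze non-negativity out of $|T| + |N| + 1 - (\deg(u) + \deg(v))$. The fact that $uv$ is an edge, forcing $v \in N(u) \cap S$ and $u \in N(v) \cap S$, should pin the relevant discrepancy down to within a constant, absorbable in the $+1$ term; verifying this crisply is where the delicate bookkeeping lies. Once this is established, the strict inequalities $e(G') \ge e(G)$ and $t(G') > t(G)$ contradict the optimality of $G$, forcing $S = \emptyset$.
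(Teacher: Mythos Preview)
Your overall strategy matches the paper's: remove two adjacent vertices $u,v\in S$ and insert one $C$-like vertex and one $T$-like vertex. The non-triangular count is handled correctly. The gap is in the edge count.

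From your stated ingredients you only get $|T|+|N|\ge |B|$ via \Cref{obs:two-vs}, together with $\deg(u),\deg(v)\le |B|+O(\myeta^{3/4}n)$. Plugging these into
\[
e(G')-e(G)=|B|+|T|+|N|+1-\deg(u)-\deg(v)
\]
yields only $e(G')-e(G)\ge 1-O(\myeta^{3/4}n)$, which can be negative. The ``$+1$'' from the edge $uv$ cannot absorb an $O(\myeta^{3/4}n)$ deficit, so the hoped-for ``squeezing down to a constant'' does not happen: nothing in your setup forces $\deg(u)+\deg(v)$ to be within $O(1)$ of $|B|+|T|+|N|$. (Note also that \Cref{obs:two-vs} applied to $u$ and a $C$-vertex already gives $\deg(u)\ge |B|-1$, so the ``easy'' case $\deg(u),\deg(v)<|B|$ is essentially vacuous.)

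The paper sidesteps this by joining the new $T$-like vertex $x$ to $B\cup T$ rather than to $T\cup N$. Then the edge balance becomes
\[
e(H)-e(G)\ge 2|B|+|T|-\bigl(\deg(u)+\deg(v)-1\bigr)\ge |T|-O(\myeta^{3/4}n),
\]
and \Cref{claim:A'-almost-equals-A} gives $|T|\ge(\sqrt{2\myeta}-O(\myeta^{3/4}))n$, which dominates the error term. Your construction would also go through if you first argued $B\subseteq N$ (which does follow from \Cref{claim:B-A-almost-complete} together with $|T|=\Omega(\sqrt{\myeta}\,n)$), but you did not supply that step, and the weaker bound from \Cref{obs:two-vs} is not enough.
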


            \begin{proof} 
                Suppose that $S$ contains a vertex $u$. By
                \Cref{prop:struct-almost-bip}, $u$ has a neighbour $v \in A$.
                Since there are no edges between $T$ and $S$, we conclude that 
                $v \in S$. In particular, $u$ and $v$ have no
                neighbours in $T$. Now let $H$ be the graph obtained from $G$ by
                removing the vertices $u$ and $v$ and adding new vertices $x$
                and $y$ where $x$ is joined by edges to $B \cup T$ and $y$ is
                joined to $B$. It follows from
                \Cref{claim:B-A-almost-complete,claim:X-vs-small-deg,%
                claim:A'-almost-equals-A} that $e(H) \ge e(G) -
                O(\myeta^{3/4}n) + (\sqrt{2\myeta} - O(\myeta^{3/4}))n >
                e(G)$.
                Recall that the by \Cref{prop:struct-almost-bip}, the
                non-triangular degree of any vertex in $A$ is at most
                $O(\sqrt{\myeta}n)$, implying that $\t(H) \ge \t(G) -
                O(\sqrt{\myeta}n) + |B| > \t(G)$.
                $H$ has more edges and more non-triangular edges than $G$, a
                contradiction to the assumption that $G$ is optimal. Thus, $S$
                is empty.
            \end{proof}

            Similarly, we prove that $D$ is empty. The trick here is to
            replace two adjacent vertices in $D$ by one vertex in $C$ and one in
            $T$.
            
            \begin{claim} \label{claim:D-empty}
                The set $D$ is empty.
            \end{claim}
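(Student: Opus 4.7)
The plan is to mirror the argument of \Cref{claim:X-empty}. Assume for contradiction that $D$ is non-empty, and use the third assertion of \Cref{prop:struct-almost-bip} to pick adjacent vertices $u,v \in D$. I then build a graph $H$ from $G$ by deleting $u$ and $v$ and inserting two new vertices $x$ and $y$: the vertex $x$ is joined to $B \cup T$ (playing the role of a new vertex in the clique $T$), and $y$ is joined only to $B$ (playing the role of a new vertex in $C$). The goal is to show that $H$ satisfies $|H|=|G|$, $e(H) > e(G)$ and $\t(H) > \t(G)$, contradicting the optimality of $G$.

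For the edge count, observe that since there are no edges between $B$ and $D$, each of $u$ and $v$ has degree at most $|A|+|C|+|D|-1 = n - |B| - 1$. Hence
\[
e(H) - e(G) \ge (|B| + |T|) + |B| - (\deg(u) + \deg(v) - 1) \ge 4|B| + |T| - 2n + 1.
\]
Combining the estimates $|B| = (1/2 - O(\myeta^{3/4}))n$ from \Cref{prop:sizes-almost-bip} and $|T| \ge (\sqrt{2\myeta} - O(\myeta^{3/4}))n$ from \Cref{claim:A'-almost-equals-A} gives $e(H) - e(G) \ge (\sqrt{2\myeta} - O(\myeta^{3/4}))n > 0$ once $\myeta$ is small enough.

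The non-triangular count is handled as follows. By the fourth assertion of \Cref{prop:struct-almost-bip}, the non-triangular degree of each of $u$ and $v$ is $O(\sqrtEta n)$, so deleting them destroys at most $O(\sqrtEta n)$ non-triangular edges. All $|B|$ new edges of the form $yb$ are non-triangular, because $N_H(y) = B$ is independent and so $y$ shares no neighbour with any $b \in B$. In contrast, each new edge $xb$ is triangular, since \Cref{claim:B-A-almost-complete} guarantees that $b$ has a neighbour $t \in T$ and then $xt$ is an edge of $H$. In particular, inserting $x$ and $y$ does not convert any previously non-triangular edge into a triangular one. Hence $\t(H) \ge \t(G) + |B| - O(\sqrtEta n) > \t(G)$, yielding the desired contradiction.

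The only technical point to watch is that the gain $|T| = \Omega(\sqrt{\myeta}n)$ really dominates the $O(\myeta^{3/4}n)$ error terms; this requires $\myeta$ to be small and relies on the sharp leading-order estimates supplied by \Cref{prop:sizes-almost-bip}.
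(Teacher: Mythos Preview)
Your proof is correct and follows the same approach as the paper's: delete adjacent $u,v \in D$ and insert one new vertex joined to $B \cup A$ and one joined to $B$ (recall $A = T$ here by \Cref{claim:X-empty}), then check $e(H)>e(G)$ and $\t(H)>\t(G)$. The only quibble is that your ``In particular'' is a non-sequitur: the reason no old non-triangular edge becomes triangular is that $N_H(x)=B\cup T$ spans no non-triangular edge (since $B$ is independent and every edge meeting $T$ is triangular by definition of $T$) and $N_H(y)=B$ is independent --- the triangularity of each individual $xb$ is not what gives this.
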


            \begin{proof}
                Suppose that $D$ is non-empty, so we may pick adjacent vertices
                $u, v \in D$. Consider the graph $H$, obtained by removing the
                vertices $u$ and $v$ and adding new vertices $x$ and $y$ with
                $x$ joined to $A \cup B$ and $y$ joined to $B$. Note that, since
                $A = T$ is a clique of \triangularVs{} vertices, the addition of
                $x$ and $y$ does not destroy any non-triangular edges in $G
                \setminus \{u, v\}$. It therefore follows from the bounds given
                by \Cref{prop:sizes-almost-bip,claim:B-A-almost-complete} that
                $e(H) \ge e(G) + (\sqrt{2\myeta} - O(\myeta^{3/4}))n > e(G)$.
                Moreover, since $u$ and $v$ each have at most $O(\sqrt{\myeta}n)$
                non-triangular neighbours, $\t(H) \ge \t(G) + (1/2 - O(\sqrt\myeta))n
                > \t(G)$, contradicting the assumption that $G$ is optimal.
            \end{proof}
            
            Now the proof of \Cref{prop:fiddling-almost-bip} is complete.
            Indeed, we know from \Cref{claim:X-empty} that $A = T$. This means
            that $A$ induces a clique and that every vertex in $A$ is adjacent
            to every vertex in $B$. Therefore, $G =
            G(|A|, |B|, |C|)$.
        \end{proof}

\section{Middle range} \label{sec:mid-range}

    In this section we prove \Cref{thm:mid-range}, in which we consider the case
    where the graph is neither close to being complete nor close to being
    complete bipartite.
    Out of the three ranges, the middle range turns out to be the
    hardest to prove.  One of the main difficulties that arises here is that,
    unlike the other two ranges, we cannot directly conclude that the graph is
    close to a graph $G(a, b, c)$.
    
    \thmMidRange*

    Fix $\delta > 0$. Throughout this section we assume that $G$ is a compressed
    and optimal graph with $n$ vertices and $e$ edges, where $(1/4 + \delta) n^2
    \le e \le (1/2 - \delta) n^2$. Moreover, whenever we write down an
    inequality that holds for large $n$, we assume that $n$ is large enough to
    satisfy it.
    
    We split the proof of \Cref{thm:mid-range} into four stages, as described by
    the four following propositions.  In the first stage we show that $G$ has
    many \triangularVs{} vertices.

    \begin{restatable}{prop}{propLargeClique} \label{prop:large-clique}
        $G$ has $\Omega(n)$ \triangularVs{} vertices.
    \end{restatable}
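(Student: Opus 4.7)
The plan is to combine the weighted approximation of \Cref{cor:approx-by-integer-example} with the compressed structure of $G$ and an exchange-type argument to locate $\Omega(n)$ triangular vertices.

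I would start by applying \Cref{cor:approx-by-integer-example} to obtain a graph $H = G(a_0, b_0, c_0)$ satisfying $|H| = n$, $e(H) \ge e(G)$ and $\t(H) \ge \t(G) - 5n$. Optimality of $G$ forces $\t(H) \le \t(G)$ (since $e(H) \ge e(G)$), so $\t(G) = b_0 c_0 + O(n)$. The bound $e(H) \ge (1/4 + \delta)n^2$ together with $b_0(n - b_0) \le n^2 / 4$ force $\binom{a_0}{2} \ge \delta n^2 - O(n)$, so $a_0 \ge \sqrt{2\delta}\, n - O(1) = \Omega(n)$. Thus the clique part of the extremal approximation already has linear size; the task is to transfer this to $G$ itself.

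Next I would use the compressed structure. Since $\t(G) = \Omega(n^2)$, the average of $\degNonTriangular$ is $\Omega(n)$, so some vertex $v$ has $\degNonTriangular(v) = \Omega(n)$. Its non-triangular neighbourhood $I_v$ is independent in $G$ (two non-triangular neighbours of $v$ being $G$-adjacent would close a triangle through $v$), so by \Cref{obs:compressed} it contains a set of clones $J \subseteq I_v$ with $|J| = \Omega(n)$. Let $M$ be the common $G$-neighbourhood of $J$ (in particular $v \in M$). A short case analysis shows that an edge $jw$ with $j \in J$, $w \in M$ is non-triangular if and only if $w$ has no $G$-neighbour in $M$, so $M$ partitions as $M_0 \sqcup M_\Delta$ with $M_0$ (the part containing $v$) independent in $G$. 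The extremal picture suggests that $J \cup M_0$ plays the role of $B \cup C$ in $G(a_0, b_0, c_0)$, the vertices of $M_\Delta$ triangulate edges from $J$, and $V(G) \setminus (J \cup M)$ plays the role of the clique $A$. Controlling $\t(G) \le |J|\cdot|M_0| + (\text{non-triangular edges avoiding } J)$ and combining with $\t(G) = b_0 c_0 + O(n)$ should pin $|J \cup M| \le n - a_0 + o(n)$, leaving $\Omega(n)$ vertices outside $J \cup M$. Any leftover vertex with a non-triangular incident edge could then be absorbed into $J$ or $M_0$ to strictly improve either $e$ or $\t$, contradicting optimality of $G$ via \Cref{lem:exchange}; hence almost all leftover vertices must be triangular.

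The main obstacle will be the bookkeeping in this last step. The compressed property controls independent sets only up to $O(n^{1/3}\log n)$ slack, and \Cref{lem:exchange} provides only a linear exchange rate, so all constants must be tracked with care as functions of $\delta$. The proof may also need to branch according to whether $a_0$ is close to its lower bound $\sqrt{2\delta}\, n$ or much larger, since the two regimes lead to rather different structural pictures for the weighted approximation, and the swap used to contradict optimality must be chosen to fit the regime at hand.
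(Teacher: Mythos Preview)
Your opening step already diverges from the paper in a way that creates a real gap. You invoke \Cref{cor:approx-by-integer-example}, which produces a graph $H=G(a_0,b_0,c_0)$ with the right numerical parameters but \emph{no structural relation to $G$}: the corollary does not say $H$ is a subgraph of $G$. So the fact that $a_0=\Omega(n)$ tells you nothing about cliques inside $G$. The paper instead uses \Cref{lem:approx-by-good-weighted-graph}, whose point is precisely that the good weighted graph it returns is a weighted \emph{subgraph} of $G$; hence its clique part $K$ is an honest clique in $G$. A comparison of the upper bound $e(G)\le (1-1/m)\alpha^2/2+\alpha\beta+\beta\gamma$ (where $m=|K|$) with the lower bound $e(G)\ge e(G(a,b,c))$ coming from optimality then forces $m\ge \alpha^2/(5n)=\Omega(n)$. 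This step --- producing a linear clique \emph{in $G$} --- is the heart of the argument, and your proposal does not contain it.

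Your attempted workaround via a set of clones $J$ and its common neighbourhood $M$ does not close this gap. You assert that one can ``pin $|J\cup M|\le n-a_0+o(n)$'', but you give no mechanism for this and I do not see one: the inequality $\t(G)\le |J|\,|M_0|+(\text{non-triangular edges avoiding }J)$ goes the wrong way to bound $|J\cup M|$ from above, and nothing ties $|M|$ to $a_0$. Likewise, the ``absorb a leftover vertex into $J$ or $M_0$'' step is not a well-defined modification, and it is unclear why it would strictly improve both $e$ and $\t$. After obtaining the large clique, the paper's route is quite different from what you sketch: it builds a set $U$ by iteratively extracting sets of clones of size $\ge n^{1/2}$, so that $\comp{U}$ has independence number $<5n^{1/2}$; then \Cref{lem:exchange} forces $G[\comp{U}]$ to miss only $O(n^{3/2})$ edges, and a counting argument shows that all but $O(n^{1/2})$ vertices of $K\setminus U$ are triangular. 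None of the ``bookkeeping'' you anticipate is the real obstacle; the missing idea is the large clique inside $G$ coming from \Cref{lem:approx-by-good-weighted-graph}.
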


    In the second stage we conclude that $G$ admits the following structure (see
    also \Cref{fig:struct-mid-range}). This implies that $G$ vaguely resembles a
    graph $G(a, b, c)$.

    \begin{restatable}{prop}{propStructure} \label{prop:structure}
        There is a partition $\{A, B, C\}$ of $V(G)$ such that all parts have
        size $\Omega(n)$ and the following properties are satisfied.

        \begin{enumerate}
            \item \label{cond:struct-A}
                $A$ is the set of \triangularVs{} vertices in $G$, it spans a
                clique and its vertices are adjacent to all of $B$ and none of
                $C$.

            \item \label{cond:struct-B}
                $B$ may be partitioned into $O(1)$ sets of clones and a
                remainder of size $O(n^{1/2} \log n)$.

            \item \label{cond:struct-C}
                $C$ may be partitioned into $O(1)$ sets of clones, each
                having $\Omega(n)$ non-triangular neighbours in $B$,
                and a remainder of size $O(n^{1 / 3} \log n)$.
        
        \end{enumerate}
    \end{restatable}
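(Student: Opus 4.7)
The natural starting point is to set $A$ to be the set of \triangularVs{} vertices of $G$. By clause~\ref{itm:defn-compressed-triangular-vs} of \Cref{defn:compressed}, $A$ spans a clique and all its vertices share a common neighbourhood outside $A$; let $B$ denote this common neighbourhood and put $C := V(G) \setminus (A \cup B)$, which makes condition~\ref{cond:struct-A} immediate. The first milestone is to establish $|A|, |B|, |C| = \Omega(n)$. Here $|A| = \Omega(n)$ is \Cref{prop:large-clique}. For $|C|$, I would observe that every edge inside $A \cup B$ is triangular (inside $A$ by the clique structure, across $A$--$B$ via a third vertex of $A$, and inside $B$ via any vertex of $A$), so every non-triangular edge of $G$ touches $C$; comparing $G$ with a suitable $G(a,b,c)$ in the middle range yields $\t(G) = \Omega(n^2)$, hence $|C| \ge \t(G)/n = \Omega(n)$. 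For $|B| = \Omega(n)$, if $|B|$ were too small then the edge budget $(1/4+\delta)n^2 \le e \le (1/2-\delta)n^2$ would be forced into an almost-disjoint union of the clique on $A$ and a graph on $C$, whose attainable non-triangular count is strictly smaller than that of the optimal $G(a,b,c)$; \Cref{lem:exchange} then provides the quantitative exchange that contradicts the optimality of $G$.

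I would next analyse $C$. First I would reduce to the case where $C$ is independent, using an optimality swap to reroute any triangular edge inside $C$ into something more profitable or, if needed, absorb the few such edges into the remainder. Applying clause~\ref{itm:defn-compressed-indep} of \Cref{defn:compressed} to $C$ then writes $C$ as a union of at most $3 \log n$ sets of clones, of which at most four exceed size $3n^{1/3}$. I would keep the clone sets of size $\Omega(n)$ as the $O(1)$ sets announced in condition~\ref{cond:struct-C}, sweeping the remaining at most $3\log n$ small sets (of total size $O(n^{1/3} \log n)$) together with any medium-sized exceptions into the remainder. Finally, for each large clone set $I$ all vertices of $I$ share the same neighbourhood inside $B$; every edge from $I$ to a non-triangular neighbour contributes $|I|$ times that count to $\t(G)$, so $\t(G) = \Omega(n^2)$ combined with an exchange-lemma argument (exchanging edges at $I$ against edges elsewhere) forces the number of non-triangular neighbours of $I$ in $B$ to be $\Omega(n)$.

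The analysis of $B$ is analogous but strictly more delicate, and is where I expect the main obstacle. Internal edges of $B$ are triangular (through any vertex of $A$), so they do not directly obstruct the non-triangular count but they do consume the edge budget; an exchange argument via \Cref{lem:exchange} caps their number and shows that, up to an adjustment of size $O(n^{1/2})$, the set $B$ is essentially independent. Applying clause~\ref{itm:defn-compressed-indep} of \Cref{defn:compressed} to the resulting large independent subset of $B$ then yields at most four large clone sets plus a remainder of total size $O(n^{1/3}\log n) + O(n^{1/2}) = O(n^{1/2} \log n)$, matching condition~\ref{cond:struct-B}. The principal difficulty lies in the precise control of internal edges of $B$ via the exchange lemma, which has to be combined with the compressed structure and the optimality of $G$ to rule out configurations that would otherwise spoil the desired decomposition; this is the step I expect to require the most care.
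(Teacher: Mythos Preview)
Your setup of $A$, $B$, $C$ and the arguments for $|A|, |C| = \Omega(n)$ match the paper. The genuine gap is in how you propose to decompose $C$ and $B$.

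For $C$, you write that you would first ``reduce to the case where $C$ is independent'' and then apply clause~\ref{itm:defn-compressed-indep} of \Cref{defn:compressed} directly to $C$. But at this stage nothing forces $C$ to be (even close to) independent, and the paper never establishes this here; in fact, the independence of $C$ is only pinned down much later, in the proof of \Cref{prop:fiddling}, after \Cref{prop:sizes} is already available. Clause~\ref{itm:defn-compressed-indep} applies to independent sets, so you cannot invoke it on $C$ as a whole. The paper sidesteps this entirely: it first shows, via an exchange argument (add edges from a bad set $Z \subseteq C$ to $A$), that all but $O(1)$ vertices of $C$ have $\Omega(n)$ non-triangular neighbours in $B$; it then greedily builds a set $S \subseteq B$ of size $O(1)$ whose non-triangular neighbourhoods cover $C' = C \setminus O(1)$. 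Since non-triangular neighbourhoods are automatically independent, this partitions $C'$ into $O(1)$ independent sets, and now the compressed property applies to each piece.

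For $B$, your plan is to cap the number of internal edges of $B$ via \Cref{lem:exchange} and conclude that $B$ is independent up to $O(n^{1/2})$ vertices. But a bound on $e(G[B])$ does not by itself yield a large independent subset covering all but $O(n^{1/2})$ vertices; for instance, $O(n)$ edges could be spread so that every vertex of $B$ has a neighbour in $B$. The paper instead bootstraps from the decomposition of $C$ just obtained: every vertex of $B$ (being outside $A$) has a non-triangular neighbour in $C$, so the $O(1)$ clone-classes of $C$ cover, via their non-triangular neighbourhoods, all of $B$ except a set $Y$ whose vertices are non-triangularly connected only into the small remainder $Z$ of $C$. Because $Z$ consists of $O(\log n)$ clone-classes, $Y$ splits into $O(\log n)$ independent sets; adding all edges inside a largest such set and losing only $|I|\,|Z|$ non-triangular edges then gives $|Y| = O(n^{1/2}\log n)$ via \Cref{lem:exchange}. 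The key idea you are missing, in both parts, is this covering by non-triangular neighbourhoods, which produces the $O(1)$ independent pieces without ever needing $B$ or $C$ themselves to be independent.
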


    \begin{figure}[h]\centering 
        \includegraphics[scale=1.2]{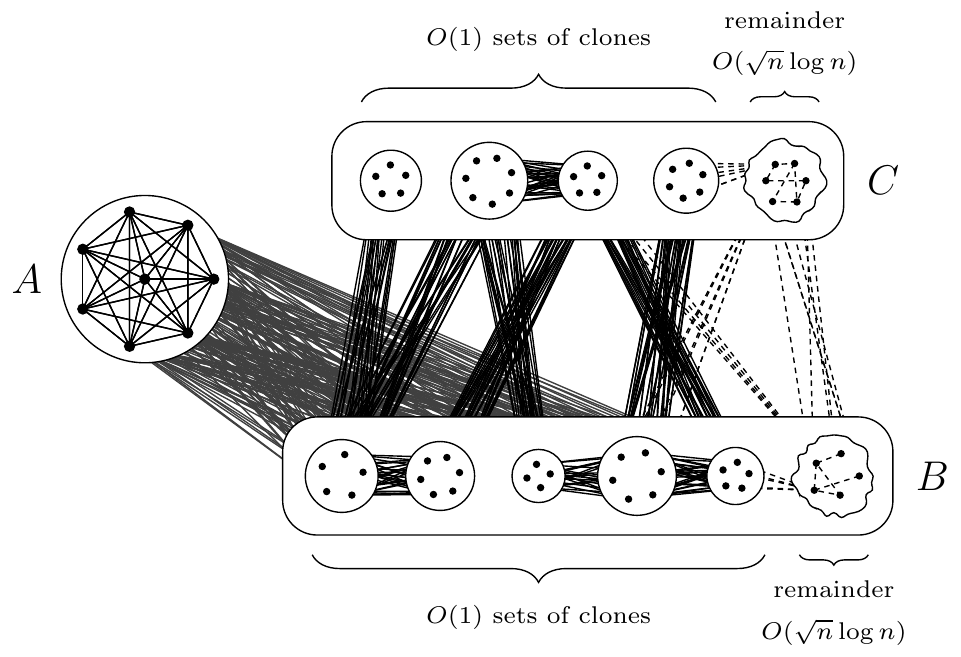}

        \caption{The partition $\{A, B, C\}$}
        \label{fig:struct-mid-range}
    \end{figure}
    
    In the third stage we show that the number of edges (and non-triangular
    edges) in $G$ is close to the number of edges (and non-triangular edges) in
    $G(|A|, |B|, |C|)$.

    \begin{restatable}{prop}{propSizes} \label{prop:sizes}
        Let $A, B, C$ be as in \Cref{prop:structure} and denote $a = |A|, b =
        |B|, c = |C|$.  Then $e(G) = a^2/2 + ab + bc + O(ֿ\approxBedge)$  and
        $\t(G) = bc + O(\approxBedge)$.  
    \end{restatable}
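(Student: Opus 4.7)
My plan is to compare $G$ with $G(a, b, c)$ via an explicit surgery, and then pin down the deviations using the Exchange Lemma \Cref{lem:exchange} together with the optimality of $G$.

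Using \Cref{prop:structure}, I would decompose
\[
e(G) = \binom{a}{2} + ab + e(B,B) + e(B,C) + e(C,C),
\]
since $A$ is a clique complete to $B$ with no edges to $C$. Every vertex of $A$ is triangular, so every edge incident to $A$ is triangular and all non-triangular edges of $G$ lie in $B \cup C$; in particular $t(G) \le e(B,B) + e(B,C) + e(C,C)$.

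Next I would introduce the surgery graph $G^\star$ obtained from $G$ by deleting every within-$B$ and within-$C$ edge and adding every missing $B$-$C$ edge. A short check using the structure confirms $G^\star \cong G(a, b, c)$: $A$ is still a clique complete to $B$ with no edges to $C$, $B$ and $C$ are now independent, and every $B$-$C$ pair is joined. In $G^\star$, every $B$-$C$ edge is non-triangular, since a common neighbour of $u \in B$ and $v \in C$ would need to lie outside $A$ (no $A$-$C$ edges), outside $B$ (no within-$B$ edges in $G^\star$) and outside $C$ (no within-$C$ edges). Consequently $e(G^\star) = \binom{a}{2} + b(a+c)$ and $t(G^\star) = bc$. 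Setting $\Delta_e := e(G) - e(G^\star)$ and $\Delta_t := t(G) - bc$, the target reduces to showing $|\Delta_e|, |\Delta_t| = O(\approxBedge)$.

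Since $G$ is optimal, $G^\star$ cannot strictly dominate it, so $\max(\Delta_e, \Delta_t) \ge 0$. Applying \Cref{lem:exchange}(1) to $G$ with benchmark $e(G^\star)$, together with the bound $t(n, e(G^\star)) \le bc + O(n)$ from \Cref{thm:furedi-maleki} (obtained by taking $(a,b,c)$ as the witness in the definition of $g(n, e(G^\star))$), yields $\zeta \Delta_e + \Delta_t \le O(n)$ whenever $\Delta_e \ge Cn$. For a matching lower bound on $\Delta_t$ in the regime $\Delta_e > 0$, I would compare $G$ against a shifted benchmark $G(a', b', c')$ with $e(G(a', b', c')) \ge e(G)$; for instance, $(a+k, b, c-k)$ with $k \approx \Delta_e/a$ satisfies $e \ge e(G)$, so optimality of $G$ gives $\Delta_t \ge b(c-k) - bc \ge -b\Delta_e/a$. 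Combining the two estimates pins down $|\Delta_e|, |\Delta_t| = O(n) \subseteq O(\approxBedge)$. Since $\Delta_e = (e(B,B) + e(C,C)) - (bc - e(B,C))$, $\Delta_t \le e(B,B) + e(C,C)$, and $\Delta_t \ge -(bc - e(B,C)) - T_{BC}$ (where $T_{BC}$ counts triangular $B$-$C$ edges), each of $e(B,B)+e(C,C)$, $bc - e(B,C)$ and $T_{BC}$ is forced to be $O(n)$, from which the required estimates for both $e(G)$ and $t(G)$ follow.

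The hard part will be establishing the lower bound on $\Delta_t$ when $\Delta_e > 0$, since direct optimality against $G^\star$ is uninformative in this direction. The naive shift $(a+k, b, c-k)$ gives $\Delta_e(\zeta - b/a) \le O(n)$, useful only when $b < \zeta a$; other relative-size regimes require different shifts such as $(a, b+k, c-k)$ (useful when $b$ is sub-dominant relative to $n/2$ and $b > c$) or $(a+k, b-k, c)$ (useful when $c$ is small relative to $b$), so that the rate of trade-off matches $\zeta$. Organising these case distinctions, and verifying the range hypotheses of \Cref{lem:exchange} --- namely that $e(G^\star) \in [n^2/4, (1/2 - \delta)n^2]$ and the exchange parameter lies in $[Cn, \eps n^2]$, with the boundary case $|\Delta_e| \le Cn$ handled trivially --- will be the bulk of the technical work.
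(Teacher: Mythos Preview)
Your approach has a genuine gap at the step where you claim $t(n, e(G^\star)) \le bc + O(n)$ ``from \Cref{thm:furedi-maleki}, obtained by taking $(a,b,c)$ as the witness in the definition of $g(n, e(G^\star))$''. The direction is wrong: using $(a,b,c)$ as a witness in $g(n,e) = \min\{e - b'c' : \dots\}$ gives $g(n, e(G^\star)) \le e(G^\star) - bc$, hence $t(n, e(G^\star)) \ge bc$, not $\le bc + O(n)$. What you actually need is that $(a,b,c)$ is \emph{near-optimal} among all triples with the same edge count, i.e.\ that no $G(a',b',c')$ with $e(G(a',b',c')) \ge e(G^\star)$ has $b'c'$ substantially larger than $bc$. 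But the triple $(a,b,c)$ here is not chosen to optimise anything; it is the output of the structural decomposition of $G$, and a priori could be far from the Pareto frontier. Knowing that it is near-optimal is essentially equivalent to the statement you are trying to prove, so the argument is circular. Your later ``shift'' inequalities suffer the same problem in the other direction: they compare $G$ against particular graphs $G(a',b',c')$, which only yields useful information once you already know these benchmarks are themselves near-optimal.

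The paper's proof avoids this circularity by not comparing $G$ directly with $G(a,b,c)$. Instead it runs a controlled version of the F\"uredi--Maleki weight-shifting process (\Cref{lem:process-no-empty-triangle}) on $G$, always choosing the independent triple to contain one vertex of $A$ and two of $C$. This produces a good weighted graph $H$ with $e(H), t(H)$ within $O(\approxAedge)$ of $e(G), t(G)$, in which the surviving $C$-vertices form a clique. A convexity-type argument (shifting weight between two $C$-vertices by $\pm\lambda$) then forces exactly one $C$-vertex to survive; separately, the number of $A$-vertices removed is controlled because at most one of them can carry non-unit weight. Only at the very end does the argument compare the weight totals $\alpha,\gamma$ in $H$ with $a,c$ in $G$, via the identity $e(H)-t(H) \ge a^2/2 + ab$ coming from the fact that all $A$-incident edges are triangular. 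The point is that the near-optimality of $(a,b,c)$ is a \emph{consequence} of this structural reduction, not an input to it.
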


    In the final fourth stage we complete the proof of \Cref{thm:mid-range}.
   
    \begin{restatable}{prop}{propFiddling} \label{prop:fiddling}
        $G \cong G(a, b, c)$ for some $a, b, c$.
    \end{restatable}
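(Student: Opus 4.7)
Set $a = |A|$, $b = |B|$, $c = |C|$, and let $G^* = G(a,b,c)$. Proposition~\ref{prop:structure} has already placed $A$ correctly: it is a clique of triangular vertices, fully joined to $B$ and with no neighbour in $C$. Hence $G \cong G^*$ reduces to the three conditions (i) $B$ is independent, (ii) $C$ is independent, and (iii) every pair in $B \times C$ is an edge of $G$. The plan is, first, to use Proposition~\ref{prop:sizes} to bound the ``defects'' against (i)--(iii), and then to rule out the remaining defects one at a time via local exchanges that contradict the optimality of $G$ guaranteed by Definition~\ref{def:optimal}.

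For the bookkeeping, write $e_B, e_C$ for the number of internal edges of $B, C$ respectively, and $k = bc - e_{BC}$ for the number of missing $B$-$C$ edges. A class-wise count gives $e(G) = \binom{a}{2} + ab + e_B + e_C + e_{BC}$, so comparison with Proposition~\ref{prop:sizes} yields $e_B + e_C - k = O(n^{7/4}\sqrt{\log n})$. Because $|A| = \Omega(n)$, every $B$-$B$ edge is triangular through any vertex of $A$, so $B$-$B$ edges contribute nothing to $t(G)$; writing $t(G) = t_C^* + t_{BC}^*$ for the contributions of non-triangular $C$-$C$ and $B$-$C$ edges and using the trivial bounds $t_{BC}^* \le bc - k$, $t_C^* \le e_C$ together with $t(G) = bc + O(n^{7/4}\sqrt{\log n})$ from Proposition~\ref{prop:sizes}, one extracts $e_B = O(n^{7/4}\sqrt{\log n})$. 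To push this to analogous bounds on $e_C$ and $k$, I use the rigid clone structure of Proposition~\ref{prop:structure}: each large clone class of $C$ is internally independent, so any internal $C$-edge joins two clone classes (or the remainder), and any joined pair of linear-sized clone classes contributes $\Omega(n^2)$ to $e_C$ without a matching contribution to $k$, contradicting $|e_C - k| = O(n^{7/4}\sqrt{\log n})$. The conclusion is that all three defect quantities $e_B, e_C, k$ are $O(n^{7/4}\sqrt{\log n})$, and in particular $o(n^2)$.

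The core step is the local exchange. Suppose, for contradiction, that $uv$ is an edge with $u,v \in B$. Observation~\ref{obs:compressed} and Proposition~\ref{prop:structure} give a clone class $C_0 \subseteq C$ of size $\Omega(n)$ whose common $B$-neighbourhood $N_0$ has size $b - o(n)$, using $k = o(n^2)$. Choose $w \in C_0$ and $u' \in B \setminus N_0$ from a large $B$-clone class (which exists by the compressed structure of $B$). Replace $uv$ by $u'w$; the edge count is unchanged. Since $|A| = \Omega(n)$, every triangular edge incident to $u$ or $v$ still has $\Omega(n)$ triangle-witnesses in $A$, so removing $uv$ cannot destroy the triangularity of any other edge. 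The inserted edge $u'w$ is non-triangular: there is no common neighbour in $A$ (since $w \in C$ and $A \cap N(C) = \emptyset$), none in $B$ (since $u' \notin N_0$ and $N(w) \cap B = N_0$ by the clone property), and none in $C$ (using $e_C = o(n^2)$ to refine the choice within $C_0$ so that its $C$-neighbourhood is disjoint from $u'$'s). Thus $t(G)$ strictly grows while $e(G)$ is unchanged, contradicting optimality. Symmetric exchanges, with $B$ and $C$ swapping roles and using the large clone classes supplied by Proposition~\ref{prop:structure}, eliminate $C$-$C$ edges; and once (i), (ii) hold, any remaining missing $B$-$C$ pair is filled in by transplanting one endpoint into a large $B$-clone class joined to the other endpoint, which simultaneously increases $e(G)$ (by $\Omega(n)$ edges) and $t(G)$ (by the same amount, each new edge being non-triangular by (i), (ii)). All three exchanges together force $e_B = e_C = k = 0$, so $G = G(a,b,c)$.

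The main obstacle is ensuring that each exchange is \emph{strictly} improving, i.e., that removing $uv$ does not inadvertently damage any other edge's triangularity, and that the inserted edge is genuinely non-triangular. The first half is handled by $|A| = \Omega(n)$, which supplies so many triangle-witnesses that no single edge deletion can matter; the second half rests on the compressed-graph machinery, whose $\Omega(n)$-sized clone classes (from Observation~\ref{obs:compressed} and Proposition~\ref{prop:structure}) allow one to exhibit vertices whose neighbourhoods are \emph{exactly} where the exchange requires them, so that the ``collateral'' effects of adding $u'w$ stay safely below the net gain of one non-triangular edge.
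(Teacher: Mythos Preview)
Your proof has two genuine gaps. First, the bound $e_C, k = O(n^{7/4}\sqrt{\log n})$ does not follow. You correctly extract $e_B = O(n^{7/4}\sqrt{\log n})$ and $|e_C - k| = O(n^{7/4}\sqrt{\log n})$ from Proposition~\ref{prop:sizes}, but the claim that a joined pair of linear clone classes $C_1, C_2 \subseteq C$ contributes $\Omega(n^2)$ to $e_C$ ``without a matching contribution to $k$'' is false. By Proposition~\ref{prop:structure} each $C_i$ has a set $M_i \subseteq B$ of $\Omega(n)$ non-triangular neighbours, and if $C_1$ is complete to $C_2$ then no vertex of $M_1$ can be adjacent to any vertex of $C_2$ (that would place the corresponding $C_1$--$M_1$ edge in a triangle), so $k \ge |M_1|\,|C_2| = \Omega(n^2)$. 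Hence the relation $|e_C - k| = O(n^{7/4}\sqrt{\log n})$ is perfectly consistent with both quantities being of order $n^2$, and you have not ruled this out. Second, and fatally, the edge exchange cannot be executed as described. You pick $C_0 \subseteq C$ with $B$-neighbourhood $N_0$ of size $b - o(n)$ and then seek $u' \in B \setminus N_0$ lying in a large $B$-clone class; but a $B$-clone class is either contained in $N_0$ or disjoint from it (clones share a neighbourhood), and in the latter case its size is at most $|B \setminus N_0| = o(n)$. So no such $u'$ exists. Even waiving this, your non-triangularity check confuses ``$u' \notin N_0$'' with ``$N(u') \cap N_0 = \emptyset$''; you need the latter to rule out a common neighbour of $u'$ and $w$ in $B$, and it is nowhere established.

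The paper proceeds differently on both counts. It first proves $b \ge c + \Omega(n)$ via Lemma~\ref{lem:exchange} (comparing $G$ with $G(a, b{+}d, c{-}d)$), which forces any two vertices of $C$ to share a neighbour and so rules out non-triangular $C$--$C$ edges in one stroke; this step has no analogue in your argument and is what ultimately controls the $C$-side. It then removes the remaining defects by \emph{vertex} replacement rather than edge exchange: a bad pair of adjacent vertices is deleted and replaced by fresh vertices with fully prescribed neighbourhoods (one joining $A \cup B$, one joining $B'$, say), so there is never any need to locate an existing vertex with exactly the right adjacency pattern.
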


    \begin{proof}[ of \Cref{thm:mid-range}]
        The proof is immediate from
        \Cref{prop:large-clique,prop:structure,prop:sizes,prop:fiddling}.
        The only slight technicality is that when we replace a graph with at
        most $(1/2 - \delta) n^2$ edges by an optimal and compressed graph, the
        number of edges may increase and exceed this bound. However,
        \Cref{lem:exchange} implies that the number of edges can increase by at
        most $O(n)$, so the condition is still satisfied for a relaxed value of
        $\delta$.
    \end{proof}

    We now turn to the proofs of
    \Cref{prop:large-clique,prop:structure,prop:sizes,prop:fiddling}. We present
    them in separate subsections.

    \subsection{Many \triangularVs{} vertices}

        In this subsection we prove \Cref{prop:large-clique}.

        \propLargeClique*

        The main ingredient of this proof is a surprising application of the
        exchange lemma, \Cref{lem:exchange}, and the assumption that $G$ is
        compressed. First, we conclude from
        \Cref{lem:approx-by-good-weighted-graph} that $G$ has a large clique.
        Then, we partition the graph into fairly large independent sets of clones
        and a very dense part, using the fact that $G$ is compressed. It is then
        possible to conclude that only few of the vertices of the clique are
        adjacent to non-triangular edges.
        \remark{should we still call it the exchange lemma?}

        \begin{proof} [ of \Cref{prop:large-clique}]
            Our first aim is to show that $G$ has a clique of size at least
            $\Omega(n)$. This can be done fairly easily, as
            shown in the proof of the following claim.

            \begin{claim} \label{claim:large-clique}
                $G$ has a clique of size $\Omega(n)$.
            \end{claim}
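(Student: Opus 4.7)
The plan is to apply \Cref{lem:approx-by-good-weighted-graph} to $G$ viewed as a weighted graph in which every vertex has weight $1$. This produces a good weighted subgraph $H$ with $|H| = n$, $e(H) \ge e(G)$ and $\t(H) \ge \t(G)$. Writing $V(H) = K \sqcup \{u, v\}$ with $K$ inducing a clique and $uv$ the unique non-triangular edge, let $\alpha$, $\beta$, $\gamma$ denote the total weight of $K$ and the weights of $u$ and $v$ respectively. Since $H$ is a subgraph of $G$, the set $K$ is already a clique in $G$, so it suffices to prove $|K| = \Omega(n)$.

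First I would show that $\alpha$, $\beta$ and $\gamma$ are each $\Omega(n)$. Taking $a' = \lceil \sqrt{2\delta}n \rceil + 1$, $b' = \lceil n/2 \rceil$, and $c' = n - a' - b'$ gives $e(G(a',b',c')) \ge (1/4 + \delta)n^2 \ge e(G)$ and $\t(G(a',b',c')) = b'c' = \Omega(n^2)$, so optimality of $G$ forces $\t(G) = \Omega(n^2)$, and hence $\beta\gamma = \t(H) = \Omega(n^2)$. Since $\beta + \gamma \le n$, both $\beta$ and $\gamma$ are therefore $\Omega(n)$. Moreover, since $K$ is a clique and the only remaining edges of $H$ are incident to $u$ or $v$, a direct weighted calculation yields $e(H) \le n^2/2 - (\beta + \gamma)^2/4$; combined with $e(H) \ge (1/4 + \delta)n^2$, this gives $\beta + \gamma \le n\sqrt{1 - 4\delta}$, and hence $\alpha = n - \beta - \gamma = \Omega(n)$.

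The harder step is to upgrade the weight bound $\alpha = \Omega(n)$ to the vertex-count bound $|K| = \Omega(n)$, since a priori the weight on $K$ might be concentrated on only a few vertices. Here I would exploit that $G$ is compressed: Condition \ref{itm:defn-compressed-indep} of \Cref{defn:compressed} restricts the sizes of clone sets in $G$, and the weight-shifting process of \Cref{lem:process-no-empty-triangle} (which only redistributes weight across independent triples) can be run so as to never inflate the weight at a single vertex of $H$ beyond the largest clone size in $G$. Combined with $\alpha = \Omega(n)$, this forces $|K| = \Omega(n)$. The main obstacle lies precisely in this second step: tracking the weight distribution on $K$ through the iterative construction of $H$ and using the compressed structure of $G$ to bound individual weights; by contrast, the first step is a routine arithmetic calculation using the structure of a good weighted subgraph.
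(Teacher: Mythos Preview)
Your first step has a small but genuine error: you write $e(G(a',b',c')) \ge (1/4+\delta)n^2 \ge e(G)$, but in the middle range $e(G)$ ranges up to $(1/2-\delta)n^2$, so the inequality $(1/4+\delta)n^2 \ge e(G)$ is false. To get $\t(G)=\Omega(n^2)$ you need a comparison graph with at least $e(G)$ edges; the right construction (as in the paper) is $G(a,b,c)$ with $c=(\delta/2)n$, $b=\sqrt{\delta}\,n$, $a=n-b-c$, which has $\ge (1/2-\delta)n^2$ edges and $\Omega(n^2)$ non-triangular edges. Your derivation of $\alpha=\Omega(n)$ is fine.

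The serious gap is your second step. You propose to bound individual vertex weights in $K$ by re-running the weight-shifting process of \Cref{lem:process-no-empty-triangle} and invoking the compressed structure of $G$, but you give no mechanism for this, and it is not clear one exists. The process shifts weight along independent triples, and the weight accumulated at a single vertex of $K$ after many iterations bears no obvious relation to the size of any clone set in $G$. Even granting the most optimistic reading---each vertex weight bounded by $O(n^{1/3})$ via Condition~\ref{itm:defn-compressed-indep}---you would only get $|K|=\Omega(n^{2/3})$, not $\Omega(n)$; and the four exceptional large clone sets in that condition could in principle absorb all of $\alpha$ onto $O(1)$ vertices.

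The paper avoids this entirely. It uses Cauchy--Schwarz: if $K$ has $m$ vertices with total weight $\alpha$, the weighted edge count inside $K$ is at most $(1-1/m)\alpha^2/2$, so $e(H)\le (1-1/m)\alpha^2/2+\alpha\beta+\beta\gamma$. On the other hand, since $\t(G)\le\t(H)=\beta\gamma\le \lceil\beta\rceil\lceil\gamma\rceil$, optimality of $G$ forces $e(G)\ge e\big(G(n-\lceil\beta\rceil-\lceil\gamma\rceil,\lceil\beta\rceil,\lceil\gamma\rceil)\big)\ge \alpha^2/2+\alpha\beta+\beta\gamma-O(n)$. Comparing the two bounds gives $\alpha^2/(2m)\le O(n)$, hence $m\ge \Omega(\alpha^2/n)=\Omega(n)$. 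This argument uses only optimality, not the compressed hypothesis.
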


            \begin{proof}
                By \Cref{lem:approx-by-good-weighted-graph}, there exists a
                \good{}
                weighted subgraph $H$ of $G$ satisfying $|H| = |G| = n$, $e(H) \ge e(G)$,
                $\t(H) \ge \t(G)$ (see \Cref{def:good} for the definition of a
                \good{} weighted graph). Let $\{K, \{u, v\}\}$ be a partition of
                $V(H)$ into a clique $K$ and an edge $uv$, which is the only
                non-triangular edge in $H$.

                Let $\alpha$ be the sum of the weights of the vertices in $K$
                and let $m$ be the number of vertices in $K$. Let $\beta$ and
                $\gamma$ be the weights of $u$ and $v$ and suppose that $\beta
                \ge \gamma$. Note that $\alpha + \beta + \gamma = n$. By the
                Cauchy-Schwarz inequality, the contribution of the vertices in
                $K$ towards $e(H)$ is maximised if all of these vertices have
                weight $\alpha / m$. Therefore this contribution does not
                exceed $(\alpha/m)^2 \binom{m}{2} = (1 - 1/m) \alpha^2 / 2$.
                Moreover, since no vertex is adjacent to both $u$ and $v$,
                the contribution of
                the edges between $K$ and $\{u, v\}$ towards $e(H)$ is maximised
                when every vertex in $K$ is adjacent to $u$, but not $v$.
                Hence,
                \begin{equation} \label{eqn:bound-on-edges}
                    e(G) \le e(H) \le
                    \left( 1 - \frac{1}{m} \right) \frac{\alpha^2}{2} +
                    \alpha \beta + \beta \gamma.
                \end{equation}

                In particular, since $\beta \gamma \le n^2/4$, we have $e(G) \le
                n^2 / 4 + \alpha n$. Recall that $e(G) \ge (1/4 + \delta)n^2$.
                It follows that $\alpha \ge \delta n$.

                Denote $b = \lceil \beta \rceil$, $c = \lceil \gamma \rceil$ and $a =
                n - b - c$ and consider the graph $F = G(a, b, c)$.
                Note that $\t(G) \le \t(H) = \beta \gamma \le bc = \t(F)$.
                Since $G$ is optimal, it follows that $e(G) \ge e(F)$.
                Therefore,
                \begin{align*}
                    e(G) \ge e(F)  
                        &= \binom{a}{2} + ab + bc \\ 
                        &\ge \frac{(\alpha - 2)(\alpha - 3)}{2} + (\alpha - 2)\beta +
                            \beta \gamma \\
                        &\ge \frac{\alpha^2}{2} + \alpha \beta + \beta \gamma -
                            2.5n \\
                            &= \left( 1 - \frac{5n}{\alpha^2} \right)
                                \frac{\alpha^2}{2} + \alpha \beta + \beta \gamma.
                \end{align*}
                
                By (\ref{eqn:bound-on-edges}) we have $m \ge \frac{\alpha^2}{5n} \ge
                \frac{\delta^2}{5} n$. It follows that $G$ has a clique of size
                at least $\frac{\delta ^ 2}{5} n$.
            \end{proof}
           
            Recall that $G$ is compressed.
            It follows from \Cref{defn:compressed} that every independent
            set in $G$ of size at least $5 n ^ {1 / 2}$ contains a set of clones of
            size at least $n ^ {1 / 2}$ (see \Cref{obs:compressed}).
            
            We construct a set $U \subseteq V(G)$ as follows.  We start with $U
            = \emptyset$.  At each stage, if the complement $\comp{U} = V(G)
            \setminus U$ contains an independent set $I$ of size at least $5 n ^
            {1 / 2}$, then $I$ contains a set of clones of size at least $n ^ {1
            / 2}$. We add this set of clones to $U$ and continue until
            $\comp{U}$ has no independent set of size at least $5 n ^ {1 / 2}$.
            Observe that $U$ is a disjoint union of sets of clones each of size
            at least $n ^ {1 / 2}$, while the complement $\comp{U}$ has no
            independent set of size at least $5 n ^ {1 / 2}$ (see
            \Cref{fig:set-U}).

            \begin{figure}[h]\centering
                \includegraphics{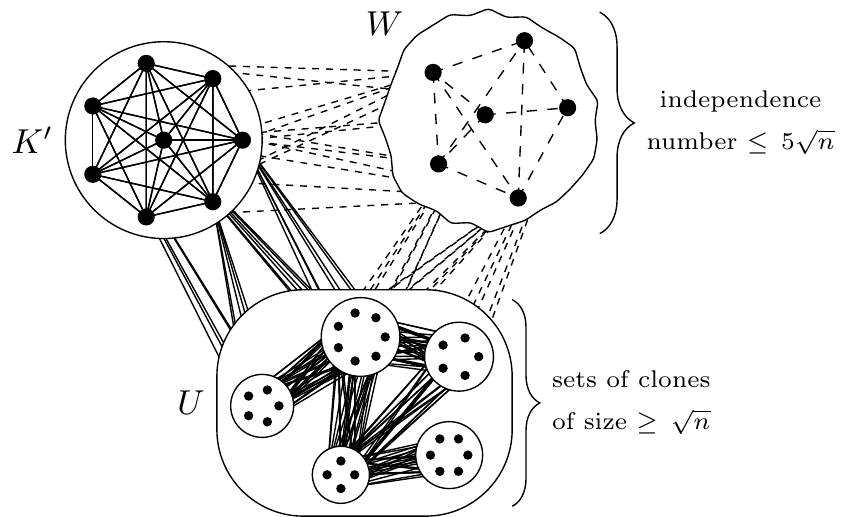}
                \caption{The sets $U$, $W$ and $K'$}
                \label{fig:set-U}
            \end{figure}

            In the following claim we deduce from \Cref{lem:exchange} that
            $G[\comp{U}]$ is very dense.

            \remark{complement sign very ugly}

            \begin{claim} \label{claim:bound-missing-edges}
                $G[\comp{U}]$ has $O(n^{3/2})$ non-edges. 
            \end{claim}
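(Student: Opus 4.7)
My plan is to apply the exchange lemma (Lemma~\ref{lem:exchange}, statement~(1)) to derive the claim from the optimality of $G$. Let $x$ denote the number of non-edges in $G[\comp{U}]$; our goal is to show $x = O(n^{3/2})$. If $x \le C n$ (for the constant $C$ from Lemma~\ref{lem:exchange}), the claim is immediate. Otherwise set $x' = \min(x, \varepsilon n^2)$, so $Cn < x' \le \varepsilon n^2$ lies in the valid range for the exchange lemma. Let $G'$ be the graph obtained from $G$ by adding $x'$ of the non-edges of $G[\comp{U}]$ as edges. Then $e(G') = e(G) + x'$, and applying Lemma~\ref{lem:exchange}~(1) with $e = e(G) \in [n^2/4, (1/2-\delta)n^2]$ gives
\[
    \t(G') \;\le\; \t(n, e(G)) - \zeta x' \;=\; \t(G) - \zeta x',
\]
where the equality uses the optimality of $G$.

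To finish, I would complement this with a lower bound $\t(G') \ge \t(G) - O(n^{3/2})$, which together with the above forces $\zeta x' = O(n^{3/2})$, hence $x = O(n^{3/2})$ (since $x' = \varepsilon n^2$ would then violate the bound for $n$ large). For this, one analyses the non-triangular edges of $G$ that become triangular in $G'$: each such edge $ab$ must belong to a new triangle $abc$ using at least one of the added edges. Since the added edges lie entirely in $\comp{U}$, at least one endpoint of $ab$ belongs to $\comp{U}$, and the entire triangle interacts nontrivially with the structure of $\comp{U}$.

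A natural route to the estimate $\t(G) - \t(G') = O(n^{3/2})$ exploits the large clique $K$ of size $\Omega(n)$ guaranteed by Claim~\ref{claim:large-clique}. Since $K$ is a clique while each of the at most $n^{1/2}$ sets of clones in $U$ is independent, we have $|K \cap U| \le n^{1/2}$, so $K' := K \cap \comp{U}$ is a clique in $G[\comp{U}]$ of size $\Omega(n)$. One can then show, using that $\alpha(G[\comp{U}]) < 5n^{1/2}$, that each vertex of $\comp{U} \setminus K'$ has at most $O(n^{1/2})$ non-neighbours in $K'$, which together with the compressed structure of $G$ severely restricts which non-triangular edges can be affected by the additions. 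The main obstacle is carrying out this combinatorial accounting rigorously: the factor $n^{3/2}$ in the target bound seems to arise precisely from the interplay between the $\Omega(n)$ clique $K'$ inside $\comp{U}$ and the $O(n^{1/2})$ clone-class structure of $U$, and turning this heuristic into a tight bound will require a careful double-counting argument.
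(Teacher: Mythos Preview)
Your overall strategy is exactly right: add the missing edges of $G[\comp{U}]$, invoke the exchange lemma against the optimality of $G$, and for this you need $\t(G') \ge \t(G) - O(n^{3/2})$. You also correctly observe that any non-triangular edge destroyed by the additions must have an endpoint in $\comp{U}$.

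The gap is that you then overshoot: the bound $\t(G) - \t(G') \le O(n^{3/2})$ follows in one line from a fact you already stated, namely $\alpha(G[\comp{U}]) < 5n^{1/2}$, and does not require the clique $K'$ at all. The non-triangular neighbourhood of \emph{any} vertex $v$ is an independent set in $G$; intersected with $\comp{U}$ it is an independent set in $G[\comp{U}]$, hence has size at most $5n^{1/2}$. Summing over all $v\in V(G)$ shows that the total number of non-triangular edges with at least one end in $\comp{U}$ is at most $5n^{3/2}$, which is exactly the required loss bound. This is the paper's argument. Your proposed route through $K'$ and a ``careful double-counting argument'' is unnecessary, and as you yourself note, is not actually carried out; so as written the proof is incomplete, whereas the missing step is in fact immediate once you use the independence of non-triangular neighbourhoods.

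One minor technical point: you add only $x'\le x$ of the missing edges rather than all of them, but the same bound applies (the destroyed non-triangular edges are still a subset of those touching $\comp{U}$), so this causes no difficulty.
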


            \begin{proof}
                Since $G[\comp{U}]$ has no independent set of size at least $5n
                ^ {1 / 2}$, every vertex in $G$ has at most $5n ^ {1 / 2}$
                non-triangular neighbours in $\comp{U}$. It follows that the
                number of non-triangular edges with at least one end in
                $\comp{U}$ is at most $5 n ^ {3 / 2}$. 
                
                Denote by $m$ the number of non-edges in $G[\comp{U}]$. By
                adding these edges to $G$ we obtain a graph $G'$ with $n$
                vertices and $e(G) + m$ edges such that $t(G') \ge \t(G) - 5 n ^
                {3/2}$. It follows from \Cref{lem:exchange} that $m = O(n ^ {3 /
                2})$.
            \end{proof}

            Let $K$ be a largest clique in $G$, so $|K| = \Omega(n)$ by
            \Cref{claim:large-clique}. Let $K' = K \setminus U$ and denote $W =
            \comp{U} \setminus K'$ (see \Cref{fig:set-U}). Note that,
            since $U$ contains no clique of size greater than $n ^ {1 / 2}$, we
            have $|K'| \ge |K| - n ^ {1 / 2} = \Omega(n)$.  In the following
            claim we use the structure of $U$ and the previous claim to deduce
            that almost all the vertices in $K'$ are \triangularVs, i.e.~are
            incident with triangular edges only.

            \begin{claim} \label{claim:few-non-triangular-vs-K}
                All but $O(n ^ {1 / 2})$ vertices in $K'$ are
                \triangularVs.
            \end{claim}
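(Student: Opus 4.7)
My plan is, given a non-triangular $v \in K'$, to pick one non-triangular neighbour $w = w(v)$ and analyse where it can live. The key observation driving everything is that, since $v \in K$ and $vw$ is non-triangular, $w$ cannot be adjacent to any vertex of $K \setminus \{v\}$; because $|K'| = \Omega(n)$ and $K' \setminus \{v\} \subseteq \comp{U}$, this is a strong restriction. I would split according to whether $w \in \comp{U}$ or $w \in U$ and bound each part by $O(n^{1/2})$.

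If $w \in \comp{U}$, then $w$ has non-degree at least $|K'| - 1 = \Omega(n)$ inside $G[\comp{U}]$. \Cref{claim:bound-missing-edges} bounds the number of non-edges of $G[\comp{U}]$ by $O(n^{3/2})$, so at most $O(n^{1/2})$ vertices of $\comp{U}$ can play the role of $w$. Moreover, the assignment $v \mapsto w(v)$ is injective in this case: if $w = w(v_1) = w(v_2)$ with $v_1 \ne v_2$, then $w$ is adjacent to both $v_1, v_2 \in K$, contradicting that $w$ has no neighbour in $K \setminus \{v_1\}$. Hence this case contributes $O(n^{1/2})$ non-triangular vertices in $K'$.

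If instead $w \in U$, let $I \subseteq U$ be the set of clones containing $w$; by construction $|I| \ge n^{1/2}$, and every vertex of $I$ shares $w$'s neighbourhood, so all of $I$ is adjacent to $v$ and non-adjacent to $K \setminus \{v\}$. Distinct $v_1, v_2 \in K'$ must yield distinct clone sets, for otherwise the common set $I$ would be simultaneously non-adjacent to $v_2 \in K \setminus \{v_1\}$ (from $v_1$'s analysis) and adjacent to $v_2$ (through $w(v_2) \in I$). Since $U$ decomposes into pairwise-disjoint sets of clones each of size at least $n^{1/2}$, there are at most $n^{1/2}$ such clone sets in total, bounding this case by $n^{1/2}$.

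Adding the two bounds yields the desired $O(n^{1/2})$. The main thing to take care of is keeping the two injectivity arguments straight: on the $\comp{U}$ side it is $w$ itself that must be distinct across different $v$'s, while on the $U$ side it is the whole clone class that must be distinct. Beyond that, the proof is a direct combination of \Cref{claim:bound-missing-edges} with the structural properties of $U$ coming from the construction.
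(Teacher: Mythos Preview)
Your proof is correct and follows essentially the same approach as the paper: split according to whether the non-triangular neighbour lies in $\comp{U}$ or in $U$, use \Cref{claim:bound-missing-edges} for the first case and the bound of at most $n^{1/2}$ clone classes in $U$ for the second. The paper phrases the $\comp{U}$ case from the other side (counting vertices of $W$ that send a non-triangular edge into $K'$, each contributing $|K'|-1$ missing edges), but the injectivity you establish is exactly the observation that any vertex outside $K'$ has at most one non-triangular neighbour in $K'$, so the two arguments are the same in substance.
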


            \begin{proof}
                Since $K'$ is a clique, any vertex in the complement $V(G)
                \setminus K'$ sends at most one non-triangular edge to $K'$. In
                fact, if $u \in V(G) \setminus K'$ has a non-triangular
                neighbour $v \in K'$ then $u$ has no neighbours in $K'$ except
                for $v$.  
                
                Denote by $m$ the number of vertices in $W$ that have a
                non-triangular neighbour in $K'$. Then the number of missing
                edges in $G[\comp{U}]$ is at least $m (|K'| - 1) = \Omega(mn)$.
                From \Cref{claim:bound-missing-edges} we conclude that $m = O(n
                ^ {1 / 2})$. This implies that the number of vertices in $K'$
                that have a non-triangular neighbour in $\comp{U}$ is at most
                $O(n ^ {1 / 2})$.

                Finally, $U$ is a union of at most $n ^ {1 / 2}$ sets of clones,
                and any one set of clones can send non-triangular edges to at
                most one vertex in $K'$.  Therefore there are at most $n ^ {1 /
                2}$ vertices in $K'$ with a non-triangular neighbour in $U$.
            \end{proof}

            The clique $K'$ is of size $\Omega(n)$ and we now know that all but
            $O(n^{1/2})$ vertices in $K'$ are \triangularVs. It follows that $G$
            has $\Omega(n)$ \triangularVs, completing
            the proof of \Cref{prop:large-clique}.
        \end{proof}    

    \subsection{Structure}

        In this subsection we build on the fact that $G$ has $\Omega(n)$
        \triangularVs{} vertices and prove that, in terms of structure, $G$ is
        not far off from being isomorphic to a graph $G(a, b, c)$. In
        particular, we prove that the vertices of $G$ can be partitioned into
        three linearly sized sets $A, B, C$ such that $A$ is a clique, and all
        edges between $A$ and $B$ are present in $G$, while all edges between
        $A$ and $C$ are missing. We do not yet prove that the sets $B, C$ are
        independent, but we show that both of them can be partitioned into a
        small number of independent sets (see \Cref{fig:struct-mid-range}). Our
        main tool in this subsection is the assumption that $G$ is compressed,
        we also use \Cref{lem:exchange}.

        \propStructure*

        \begin{proof}
            Denote by $A$ the set of \triangularVs{} vertices in $G$. Recall
            that $G$ is compressed, hence by \Cref{defn:compressed}, the
            vertices of $A$ have the same neighbourhood outside of $A$. Denote
            this neighbourhood by $B$ and let $C = V(G) \setminus (A \cup B)$.
            Condition \ref{cond:struct-A} follows.

            Note that the graph $G(a, b, c)$, where $c = \frac{\delta}{2}n$, $b
            = \sqrt{\delta}n$ and $a = n - b - c$, has at least $(1 / 2 -
            \delta)n^2$ edges and $\frac{\delta^{3/2}}{2}n^2$ non-triangular
            edges. Hence, since $G$ is optimal and $e(G) \le (1/2 - \delta)n^2$,
            it follows that $\t(G) = \Omega(n^2)$.

            By \Cref{prop:large-clique} we have $|A| = \Omega(n)$.  Note that
            there are no non-triangular edges with both ends in $B$, so the
            number of non-triangular edges in $G$ is at most $|C|(|B| + |C|)$.
            Since $\t(G) = \Omega(n ^ 2)$, it follows that $|C| = \Omega(n)$.
            We will deduce that $|B| = \Omega(n)$ from a stronger statement that
            almost all vertices in $C$ have $\Omega(n)$ non-triangular
            neighbours in $B$.

            \begin{claim}
                All but $O(1)$ vertices of $C$ have $\Omega(n)$ non-triangular
                neighbours in $B$.
            \end{claim}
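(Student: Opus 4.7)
The plan is to argue by contradiction, combining the optimality of $G$ via \Cref{obs:two-vs} with the compressed structure of $G$ (\Cref{defn:compressed}) and the Exchange Lemma (\Cref{lem:exchange}). Suppose there exists a set $C_0 \subseteq C$ of size greater than an arbitrarily large constant $M$, consisting of vertices $v$ with fewer than $\eps n$ non-triangular neighbours in $B$, where $\eps > 0$ is a small constant. The aim is to modify $G$ into a graph on $n$ vertices that contradicts its optimality by strictly improving either $e(G)$ or $\t(G)$ without decreasing the other.

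I would first extract structural information about $C_0$. Pairing each $v \in C$ with a triangular vertex $u \in A$ (which satisfies $\degNonTriangular(u) = 0$ and $\deg(u) = |A| + |B| - 1$) and applying \Cref{obs:two-vs} forces every $v \in C$ with $\degNonTriangular(v) \ge 2$ to satisfy $\deg(v) \le |A| + |B|$. Since $v$ has no neighbours in $A$, this yields $|N(v) \cap C| \le |A|$ for all but at most one vertex of $C$, and hence $\degNonTriangular(v) < \eps n + |A|$ for every $v \in C_0$. Separately, from $\t(G) = \Omega(n^2)$ and from the fact that $B$ is independent in the non-triangular subgraph $H$ (because any two vertices of $B$ share all of $A$ as common neighbours), a counting argument combined with \Cref{lem:approx-by-good-weighted-graph} produces a vertex $v^{*} \in C$ with $\degNonTriangular(v^{*}, B) = \Omega(n)$.

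The contradiction would then come from replacing each $v \in C_0$ by a clone of $v^{*}$: the direct contribution to $\t(G)$ trades at most $\degNonTriangular(v) < \eps n + |A|$ for $\degNonTriangular(v^{*}) = \Omega(n)$ non-triangular edges, which is a net gain when $\eps$ is small, while the direct change in $e(G)$ is controlled by the bound $\deg(v) \le |A|+|B|$. The secondary effects on triangularity of edges not incident to modified vertices are neutralised by exploiting that $G$ is compressed (modifications internal to clone sets of size at least two leave all other triangles intact, since the remaining clones provide the same common neighbours), while the $O(n^{1/3} \log n)$-sized remainder in \Cref{defn:compressed} is absorbed via \Cref{lem:exchange}. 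The main obstacle is to separately handle the case where the bulk of $\t(G)$ resides in $e_H(C, C)$ rather than $e_H(B, C)$: there, the near-bipartite structure of the dense triangle-free graph $H[C]$ must be exploited, essentially to relabel part of $C$ so that $v^{*}$ with the required property still exists, and the resulting replacement argument still yields a contradiction with the optimality of $G$.
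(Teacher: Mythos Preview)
Your plan has a genuine circularity. The claim you are trying to prove is precisely what the paper uses to establish $|B| = \Omega(n)$ (this is stated explicitly just before the claim). Hence you cannot assume in advance that some $v^\ast \in C$ has $\Omega(n)$ non-triangular neighbours in $B$; if $|B|$ were small, no such $v^\ast$ could exist. Your proposed workaround for the case where the bulk of $\t(G)$ sits inside $C$ --- ``relabel part of $C$'' --- is not available, because the partition $\{A,B,C\}$ is determined by the graph: $A$ is the set of triangular vertices, $B$ is their common outside neighbourhood, and $C$ is the rest. You do not get to move vertices between $B$ and $C$. A secondary issue is the step ``$|N(v)\cap C|\le |A|$'': from $\deg(v)\le |A|+|B|$ and $N(v)\subseteq B\cup C$ you only get $|N(v)\cap C|\le |A|+|B|-|N(v)\cap B|$, which need not be at most $|A|$.

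The paper's proof sidesteps all of this with a far simpler modification. Take a set $Z\subseteq C$ of $k$ vertices, each with at most $cn$ non-triangular neighbours in $B$, and \emph{add all edges between $Z$ and $A$}. This raises $e$ by exactly $k|A|=\Omega(kn)$. Since $C\setminus Z$ is still non-adjacent to $A$, the only non-triangular edges that can be destroyed are $Z$--$B$ edges (at most $ckn$) and edges inside $Z$ (at most $\binom{k}{2}$); in particular, $Z$--$(C\setminus Z)$ non-triangular edges survive. Adding edges never creates new non-triangular edges, so there are no ``secondary effects'' to chase. One application of the Exchange Lemma (part 1) to the optimal graph $G$ then gives $\t(G')\le \t(G)-\zeta k|A|$, forcing $\zeta|A|\le 2cn$, which fails for small $c$.

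The conceptual point you are missing is that connecting bad $C$-vertices to $A$ is monotone for triangularity, so the bookkeeping is trivial; your vertex-replacement approach both adds and removes edges, which is exactly why you are forced into the delicate and ultimately unresolved analysis of secondary effects and the $C$--$C$ case.
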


            \begin{proof}
                Let $c > 0$ and $k \in \mathbb{N}$ be constants.  Suppose that
                there is a set $Z \subseteq C$ of size $k$ whose every vertex
                has at most $cn$ non-triangular neighbours in $B$.  Our aim is
                to show that if $c$ is sufficiently small and $k$ is
                sufficiently large, then no such set $Z$ exists.
                
                Consider the graph $G'$, obtained from $G$ by adding the edges
                between $Z$ and $A$.  Then $e(G') = e(G) + k |A|$ and $t(G') \ge
                \t(G) - ckn - \binom{k}{2} \ge \t(G) - 2ckn$. Provided that $k$ is
                sufficiently large, \Cref{lem:exchange} implies that $\t(G') \le
                \t(G) - \zeta k |A|$ for some constant $\zeta > 0$ that does not
                depend on $c$ or $k$. Therefore $\zeta |A| \le 2cn$ must hold.
                However, we may choose $c$ small enough to make this false, thus
                obtaining a contradiction.
            \end{proof}
            
            Denote by $C'$ the set of vertices in $C$ that have $\Omega(n)$
            neighbours in $B$. The previous claim implies that $|C \setminus C'|
            = O(1)$. The following claim implies that $C'$ may be partitioned
            into $O(1)$ independent sets.
            
            \begin{claim} \label{claim:chromatic-number-C}
                There exists a set $S \subseteq B$ of size $O(1)$ such that
                every vertex in $C'$ has a non-triangular neighbour in $S$.
            \end{claim}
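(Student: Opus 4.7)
The plan is to exploit two structural facts: non-triangular neighbourhoods are independent in $G$, and compressedness forces large independent sets to contain large sets of clones (\Cref{obs:compressed}). First I would observe that for any vertex $v$, the set of non-triangular neighbours of $v$ is independent in $G$, since any edge between two such neighbours would close a triangle through $v$ and destroy the non-triangularity of the corresponding edges at $v$. In particular, for each $v \in C'$, the set $N_v$ of non-triangular neighbours of $v$ lying in $B$ is an independent set in $G$ of size $\Omega(n)$.

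Next I would apply \Cref{obs:compressed} to each $N_v$: since $|N_v| = \Omega(n) \ge 45 n^{1/3} \log n$ for $n$ large, $N_v$ contains a set of clones $K_v \subseteq B$ with $|K_v| = \Omega(n)$. Each $K_v$ sits inside a unique maximal set of clones $M_v$ of $G$, and necessarily $|M_v| = \Omega(n)$ and $M_v \subseteq B$ (a vertex outside $B$ cannot share a neighbourhood with one in $B$ by \Cref{prop:structure}\,\ref{cond:struct-A}). Because maximal sets of clones form a partition of $V(G)$, at most $O(1)$ of them can have size $\Omega(n)$; enumerate these as $M_1, \ldots, M_k$ with $k = O(1)$, and let $S = \{s_1, \ldots, s_k\}$, where $s_i \in M_i$ is an arbitrary representative. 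Then $S \subseteq B$ and $|S| = O(1)$.

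Finally I would verify that each $v \in C'$ has a non-triangular neighbour in $S$. Let $M_i$ be the maximal set of clones containing $K_v$. All vertices of $M_i$ share the same neighbourhood in $G$, so $v$ is adjacent either to all of $M_i$ or to none; since $v$ is adjacent to every vertex of $K_v \subseteq M_i$, in particular $v s_i \in E(G)$. A triangle through $v s_i$ would exhibit a common neighbour $x$ of $v$ and $s_i$ which, by the clone property, would also be a common neighbour of $v$ and any $m \in K_v$, contradicting the non-triangularity of $vm$. Hence $v s_i$ is non-triangular. The subtlest point is this transfer of (non-)triangularity across clones, but it is immediate from the definition of clones. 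No serious obstacle is expected; the argument is essentially a direct combination of the independence of non-triangular neighbourhoods with the granulation provided by compressedness.
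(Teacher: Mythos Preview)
Your argument is correct, modulo one cosmetic point: when you enumerate the large maximal clone classes $M_1,\dots,M_k$, you should restrict to those contained in $B$ (equivalently, to those arising as some $M_v$), so that $S\subseteq B$ as claimed; this is harmless since you already verified each $M_v\subseteq B$.

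The paper's proof uses the same three ingredients---independence of non-triangular neighbourhoods, the large clone set inside them guaranteed by compressedness, and the transfer of non-triangularity across clones---but packages them as a greedy construction: it repeatedly picks an uncovered $v\in C'$, extracts a clone set $I_j\subseteq N_v$ of size $\Omega(n)$, and adds a representative $u_j$ to $S$; the bound $|S|=O(1)$ then follows from the pairwise disjointness of the $I_j$'s (which is exactly your clone-transfer observation applied to the greedy choice of $v$). Your direct enumeration of the $O(1)$ large maximal clone classes is a slightly cleaner route to the same conclusion, trading the iterative process for the global partition structure of maximal clone classes.
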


            \begin{proof}
                We construct $S = \{u_1, \dotsc, u_k\}$ by selecting the
                elements $u_1, \dotsc, u_k \in B$ and subsets $I_1, \ldots, I_k
                \subseteq B$ in the following way. Suppose
                that $u_1, \ldots, u_j$ and $I_1, \ldots, I_j$ have been
                selected, where $j \ge 0$.  Consider the set $U_j$ consisting of
                vertices in $C'$ that have
                a non-triangular neighbour in $\{u_1, \ldots, u_j\}$ (so in
                particular $U_0 = \emptyset$). If $U_j = C'$, we stop the
                process.  Otherwise, pick a vertex $v \in C' \setminus U_j$ and
                consider the set $N$ consisting of the non-triangular neighbours
                of $v$ in $B$. By the definition of $C'$, we have $|N| =
                \Omega(n)$. Moreover, since $N$ is independent and $G$ is
                compressed, $N$ contains a set of clones of size at
                least $|N| / 5$. Denote this set of clones by $I_{j+1}$ and pick
                $u_{j+1} \in I_{j+1}$ arbitrarily.
                
                It is clear that when the process terminates, every vertex in
                $C'$ has a non-triangular neighbour in the resulting set $S$.
                It remains to check that the process stops after $O(1)$ steps.
                Indeed, suppose that it ran for $k$ steps.  The sets $I_1,
                \ldots, I_k$ are pairwise disjoint and have size at least
                $\Omega(n)$ each, whence $k = O(1)$. 
            \end{proof}

            The non-triangular neighbourhoods of the vertices in $S$ cover $C'$.
            Therefore $C'$ can be partitioned into $O(1)$ independent sets.
            Since $G$ is compressed, each independent set can be partitioned
            into $O(\log n)$ sets of clones, all but at most four of which have
            size $O(n ^ {1 / 3})$. Therefore $C'$ can be partitioned into $O(1)$
            sets of clones and a remainder of size $O(n ^ {1/3} \log n)$ (the
            $O(1)$ sets of clones being the four largest sets of clones within
            each independent set in the decomposition of $C'$, and the remainder
            consisting of the remaining $O(\log n)$ sets of clones, each of
            which has size at most $O(n ^ {1/3})$). Note that, by definition,
            every
            vertex in $C'$ has $\Omega(n)$ non-triangular neighbours in $B$. Now
            throw all of the $O(1)$ vertices of $C \setminus C'$ into the
            remainder to get a partition of $C$ that satisfies Condition
            \ref{cond:struct-C}.
            
            It remains to prove Condition \ref{cond:struct-B}. Let $Z$ denote
            the remainder in the partition of $C$ and denote $C'' = C \setminus
            Z$. Let $Y$ be the set of vertices in $B$ that do not have
            non-triangular neighbours in $C''$ and denote $B' = B \setminus Y$.
            We will show that $|Y| = O(n^{1/2} \log n)$ and that $B'$
            can be partitioned
            into $O(1)$ independent sets (from which it follows as before that
            $B'$ can be partitioned into $O(1)$ sets of clones and a
            remainder of size $O(n^{1/3} \log n)$).

            \begin{claim} \label{claim:small-leftover-B}
                $|Y| = O(n ^ {1 / 2} \log n)$.
            \end{claim}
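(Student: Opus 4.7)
The strategy is to first establish a structural observation about where the non-triangular edges incident to $Y$ can go, and then combine this with the compressed property of $G$ and the exchange lemma (\Cref{lem:exchange}).

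For each $y \in Y$, I claim that the non-triangular neighbourhood of $y$ is contained in $Z$. Indeed, $y \in B$ is adjacent to all of $A$, which is a clique of size $\Omega(n) \ge 2$, so every edge from $y$ to $A$ is triangular (via a common neighbour in $A$), and every edge within $B$ is triangular for the same reason. By the definition of $Y$, no edge from $y$ to $C''$ is non-triangular. Since $Y \subseteq B$ and $A$ is precisely the set of triangular vertices, $y \notin A$, and so $y$ must have at least one non-triangular edge; this edge must therefore go to $Z$. Writing $N^-(z)$ for the non-triangular neighbourhood of $z$, we conclude $Y \subseteq \bigcup_{z \in Z} N^-(z)$.

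Each $N^-(z)$ is independent, so by \Cref{defn:compressed} it decomposes into at most $3 \log n$ sets of clones, of which at most four have size greater than $3 n^{1/3}$. Summing over $z \in Z$ and using $|Z| = O(n^{1/3} \log n)$ yields
\begin{equation*}
    |Y| \,\le\, \sum_{z \in Z} |N^-(z) \cap Y| \,=\, O\!\bigl(M n^{1/3} \log n\bigr) + O\!\bigl(n^{2/3} (\log n)^{2}\bigr),
\end{equation*}
where $M$ bounds the maximum size of a clone set contained in $Y$. The $O(n^{2/3} (\log n)^2)$ term is well within the claimed bound, so the remaining task is to show $M = O(n^{1/6})$.

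To bound $M$, fix a clone set $I \subseteq Y$ of size $M$. Since clones share a neighbourhood, they also share a non-triangular neighbourhood, which by the first step is contained in $Z$; thus $\degNonTriangular(x) \le |Z|$ for every $x \in I$. As any $v \in C''$ satisfies $\degNonTriangular(v) = \Omega(n)$, \Cref{obs:two-vs} forces $\deg(x) \ge \deg(v) - 1 = \Omega(n)$. I then consider the modification $G'$ of $G$ that absorbs $I$ into the triangular clique: make $I$ a clique, join $I$ completely to $B \setminus I$, and delete all edges from $I$ to $C$. The loss in $\t$ is at most $M \cdot |Z|$ (from destroying $I$'s non-triangular edges), plus a small correction from triangles outside $I$ whose only common neighbours lay in $I$. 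Applying \Cref{lem:exchange} to $G$ and $G'$ forces a quantitative bound $M = O(n^{1/6})$, which combined with the earlier estimate yields $|Y| = O(n^{1/2} \log n)$. The main obstacle will be verifying that the edge gain $\Delta e$ from cliqueing $I$ and joining it completely to $B \setminus I$ dominates the loss from removing the edges between $I$ and $C$, so that the net edge gain can be compared meaningfully against the $M \cdot |Z|$ loss in non-triangular edges; controlling the edges from $I$ to $C$ using the degree lower bound $\deg(x) = \Omega(n)$ and the structure of $C''$ is the delicate quantitative point.
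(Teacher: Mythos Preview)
Your opening observation is correct and matches the paper: every $y\in Y$ has a non-triangular neighbour, and that neighbour must lie in $Z$, so $Y\subseteq\bigcup_{z\in Z}N^-(z)$.

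However, there are two genuine problems with the rest of your argument.

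\textbf{The second error term is already too large.} You write that the $O(n^{2/3}(\log n)^{2})$ term ``is well within the claimed bound''. It is not: $n^{2/3}(\log n)^{2}$ is much larger than $n^{1/2}\log n$. The mistake is summing over all $z\in Z$ individually. You should instead use that $Z$ is a union of $O(\log n)$ sets of clones (this is how the bound $|Z|=O(n^{1/3}\log n)$ arose in the first place), so there are only $O(\log n)$ \emph{distinct} neighbourhoods $N^-(z)$. Using this, your decomposition becomes
\[
|Y|\;\le\;O(\log n)\cdot\bigl(4M+9n^{1/3}\log n\bigr)\;=\;O(M\log n)+O\bigl(n^{1/3}(\log n)^{2}\bigr),
\]
and now the second term is harmless and you only need $M=O(n^{1/2})$, not $M=O(n^{1/6})$.

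\textbf{Your proposed bound on $M$ does not go through.} You want to ``absorb $I$ into the triangular clique'' and apply the exchange lemma. The net change in edges is
\[
\Delta e\;=\;\binom{M}{2}+M\bigl(|A|+|B|-M-\deg(x)\bigr)
\]
for $x\in I$. You have no upper bound on $\deg(x)$: a vertex of $Y$ may well be adjacent to all of $A\cup B$ and to a large part of $C$ via triangular edges, in which case $\Delta e$ is of order $-M|C|=-\Omega(Mn)$ and the exchange lemma gives nothing. The lower bound $\deg(x)=\Omega(n)$ that you derive from \Cref{obs:two-vs} points in the wrong direction for this step.

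The paper avoids both issues by a shorter route. Since $Z$ is a union of $O(\log n)$ clone sets, $Y$ is covered by $O(\log n)$ independent sets, hence contains an independent set $I$ with $|I|=\Omega(|Y|/\log n)$. Now simply add all edges inside $I$: this gains $\binom{|I|}{2}$ edges and can destroy at most $|I|\cdot|Z|$ non-triangular edges (the only non-triangular edges touching $I$ go to $Z$). The exchange lemma then forces $\binom{|I|}{2}=O(n)+O(|I|\,n^{1/3}\log n)$, so $|I|=O(n^{1/2})$ and $|Y|=O(n^{1/2}\log n)$. Note that this is exactly the easy $M=O(n^{1/2})$ bound you would need after correcting the first error, so once you use the clone structure of $Z$ properly the two approaches collapse into the same argument.
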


            \begin{proof}
                Recall that $A$ is the set of \triangularVs{} vertices in $G$.
                Therefore every vertex in $B$ has a non-triangular neighbour,
                and that neighbour must be in $C$.  In particular, the
                non-triangular neighbourhoods of $Z$ cover $Y$.  Since $Z$ is a
                union of $O(\log n)$ sets of clones, this implies that $Y$ can
                be partitioned into $O(\log n)$ independent sets. In particular,
                $Y$ contains an independent set $I$ of size $\Omega(|Y| / \log
                n)$.

                Let $G'$ be the graph obtained from $G$ by adding all possible
                edges spanned by $|I|$. Then $e(G') = e(G) + \binom{|I|}{2}$ and
                $\t(G') \ge \t(G) - |I||Z| \ge \t(G) - O(|I| n^{1/3} \log n)$.
                This is a contradiction to \Cref{lem:exchange} unless
                $\binom{|I|}{2} = O(n)$ or $\binom{|I|}{2} = O(|I| n ^ {1 /
                3} \log n)$. In either case $|I| = O(n ^ {1 / 2})$ and so $|Y| =
                O(n ^ {1 / 2} \log n)$.
            \end{proof}

            To complete the proof of \Cref{prop:structure}, it remains to show
            that $B'$ can be partitioned into $O(1)$ independent sets. This is
            immediate upon recalling that $C''$ is the union of $O(1)$ sets of
            clones and that every vertex of $B'$ has a non-triangular neighbour
            in $C''$. Indeed, $B'$ is the union of the non-triangular
            neighbourhoods of the vertices in $C''$, and we have
            $O(1)$ such neighbourhoods.  
        \end{proof}

    \subsection{Sizes}

        In the previous subsection we proved that $V(G)$ can be partitioned into
        sets $A, B, C$ that correspond to the three parts of a graph $G(|A|,
        |B|, |C|)$. In this subsection we consider the sizes of the sets $A, B,
        C$.  We show that the number of edges (and non-triangular edges) of $G$
        is very close to the number of edges (and non-triangular edges) of
        $G(|A|, |B|, |C|)$.

        \propSizes*

        In the proof of this proposition we revisit F\"{u}redi and Maleki's
        proof \cite{furedi-maleki} of \Cref{thm:furedi-maleki} which is an approximate
        version of our main theorem. We simulate their proof of
        \Cref{lem:approx-by-good-weighted-graph}, but keep tight control on the
        order of vertices to which we apply
        \Cref{lem:process-no-empty-triangle}.
        \remark{better explanation?}

        \begin{proof}[ of \Cref{prop:sizes}]
            Recall that by \Cref{prop:structure} each of the sets $B$ and $C$ can be
            partitioned into $O(1)$ sets of clones and a remainder of size
            $O(\approxAvx)$.  Let $G'$ be the graph obtained by removing the
            edges adjacent to vertices in these remainders.  Then $e(G') \ge
            e(G) - O(\approxAedge)$ and $\t(G') \ge \t(G) - O(\approxAedge)$. 

            The following claim is a variation of
            \Cref{lem:approx-by-good-weighted-graph}. It allows us to
            approximate $G'$ by a weighted graph that induces a clique on $C$.

            \begin{claim} \label{claim:clique-on-C}
                There is a weighted subgraph $H$ of $G'$ such that $|H| =
                n$, $e(H) \ge e(G')$ and $\t(H) \ge \t(G')$. Furthermore, $H$
                has the following properties.
                
                \begin{itemize}
                        
                        \item
                            All vertices in $A$ that are present in $H$,
                            with at most one exception, have weight $1$.
                        
                        \item
                            All vertices in $B$ are present in $H$ and have
                            weight $1$.

                        \item
                            The vertices in $C$ that are present in $H$ induce a
                            clique.

                \end{itemize}
            \end{claim}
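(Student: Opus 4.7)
\textbf{The plan} is to construct $H$ from $G'$ in three stages, each of which preserves the invariants $|H| = n$, $e(H) \ge e(G')$, and $\t(H) \ge \t(G')$. In the first stage, pick a distinguished vertex $a_0 \in A$ and set $w(a_0) = n - |V(G')| + 1$, leaving every other vertex of $G'$ at weight~$1$; this normalises the total weight to~$n$. Because $a_0$ is triangular in $G$ and every edge $a_0 x$ of $G'$ still lies in a triangle through some other vertex of~$A$ (using $|A| \ge 2$, that $A$ induces a clique, and that $A$ is complete to~$B$), the vertex $a_0$ remains triangular in $G'$, so inflating $w(a_0)$ only increases $e(H)$ and~$\t(H)$.

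In the second stage, collapse each of the $O(1)$ clones sets $S_1, \dotsc, S_k$ partitioning $C$ into a single representative $v_i$ of weight $|S_i|$. Iteratively pick three vertices of $S_i$ (an independent triple, since a set of clones is independent) and apply \Cref{lem:process-no-empty-triangle} with coefficients $(s_1, s_2, s_3) = (1, -1, 0)$; the two linear inequalities of the lemma are satisfied trivially since the three vertices share both the weighted degree and the non-triangular weighted degree. The corner case $|S_i| = 2$ is handled by enlarging the triple with any vertex of $A$ (the triple is independent since $A \not\sim C$). After this stage, $V(H) = A \cup B \cup \{v_1, \dotsc, v_k\}$, every vertex of $B$ and of $A \setminus \{a_0\}$ still carries weight~$1$, and only $a_0$ (among vertices of $A$) has a non-unit weight.

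In the third stage, reduce $\{v_1, \dotsc, v_k\} \cap V(H)$ to a clique. As long as there exist $i \ne j$ with $v_i \not\sim v_j$, the triple $\{v_i, v_j, a_0\}$ is independent (again since $A \not\sim C$), and \Cref{lem:process-no-empty-triangle} applies. The point is to choose the lemma's coefficients so that $s_{a_0} \ge 0$, which keeps $a_0$'s weight from decreasing and hence forces the removed vertex to lie in $\{v_i, v_j\}$. If $(d_{v_i}, t_{v_i})$ and $(d_{v_j}, t_{v_j})$ are coordinatewise comparable, take $s_{a_0} = 0$ and $(s_{v_i}, s_{v_j}) = \pm(1, -1)$; otherwise take $s_{a_0} > 0$ and exploit that $t_{a_0} = 0$ together with the large weighted degree $d_{a_0} = |A| + |B| - 1$ to satisfy the two inequalities of the lemma. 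Each application strictly decreases $|C \cap V(H)|$, so the process terminates after at most $\binom{k}{2} = O(1)$ steps, leaving $C \cap V(H)$ as a clique.

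\textbf{The main obstacle} is the feasibility argument in the third stage: verifying, via a short case analysis on the signs of $d_{v_i} - d_{v_j}$ and $t_{v_i} - t_{v_j}$, that \Cref{lem:process-no-empty-triangle} admits a solution with $s_{a_0} \ge 0$. This relies on the fact that $a_0$ is still triangular in $G'$ (so $t_{a_0} = 0$) and that its weighted degree $d_{a_0}$ is large enough to compensate for any mismatch between $(d_{v_i}, t_{v_i})$ and $(d_{v_j}, t_{v_j})$.
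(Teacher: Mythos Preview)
Your Stage~3 contains a genuine gap. You assert that the coefficients in \Cref{lem:process-no-empty-triangle} can always be chosen with $s_{a_0}\ge 0$, appealing to ``$d_{a_0}=|A|+|B|-1$ being large enough''. But in the incomparable case (say $d_{v_i}>d_{v_j}$ and $t_{v_i}<t_{v_j}$), writing out the two linear constraints together with $s_{a_0}+s_{v_i}+s_{v_j}=0$ and $s_{a_0}\ge 0$ shows that a nontrivial solution exists only when
\[
d_{a_0}\;\ge\;\frac{t_{v_j}\,d_{v_i}-t_{v_i}\,d_{v_j}}{t_{v_j}-t_{v_i}}
\;=\;d_{v_i}+\frac{t_{v_i}(d_{v_i}-d_{v_j})}{t_{v_j}-t_{v_i}},
\]
which (since $t_{v_i}=\Omega(n)>0$ by \Cref{prop:structure}) strictly exceeds $d_{v_i}$. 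After your Stage~2 the $C$-vertices $v_\ell$ carry weights $|S_\ell|$ that may be $\Omega(n)$, so the weighted degree $d_{v_i}$ can be as large as roughly $|B|+|C|$; in the middle range one may well have $|C|>|A|$, and then $d_{a_0}=|A|+|B|-1$ is too small. Thus the feasibility you need is simply not guaranteed, and no amount of case analysis on the signs of $d_{v_i}-d_{v_j}$ and $t_{v_i}-t_{v_j}$ will produce it.

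The paper sidesteps this issue by a different mechanism. It does not try to protect the $A$-vertex: at each step it applies \Cref{lem:process-no-empty-triangle} to a triple $\{u,v,w\}$ with $u$ the current ``special'' vertex of $A$ and $v,w\in C$ non-adjacent, and accepts whichever vertex the lemma removes. If $u$ is removed, a fresh $A$-vertex is chosen next time (so the invariant ``at most one $A$-vertex of non-unit weight'' is maintained automatically). The process halts when either the remaining $C$-vertices form a clique or all of $A$ has been consumed. The second alternative is then ruled out by a global argument: once $A$ is gone the remaining weighted graph has clique number $O(1)$ (because $B\cup C$ decomposes into $O(1)$ independent sets), which forces $e(H)\le e(G)-\Omega(n^2)$ while $\t(H)\ge \t(G)-O(n^{3/2}\log n)$, contradicting \Cref{lem:exchange}. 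This contradiction via the exchange lemma is the missing idea in your argument; without it (or some substitute) you cannot guarantee that the reduction terminates with $C$ a clique rather than with $A$ exhausted. (There are also minor slips: your Stage~1 is vacuous because $G'$ already has $n$ vertices, and you silently drop the $O(n^{1/3}\log n)$ remainder of $C$; both are easy to patch, but the Stage~3 issue is not.)
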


            \begin{proof}
                We perform the following process to obtain the weighted graph
                $H$. Initially, we set $H$ to be $G'$ with every vertex being
                given weight $1$. Then we perform multiple steps, during which
                we modify the weights of the vertices in $A \cup C$ (and remove
                some of these vertices) so that at any given time $A$ has at
                most one vertex with weight other than $1$. At each step we
                select vertices $u \in A$ and $v, w \in C$. We take $u$ to
                be the unique vertex in $A$ of weight not equal to $1$, and if
                there is no such vertex, we take it to be an arbitrary vertex
                remaining in $A$. We take $v$ and $w$ to be any pair of
                non-adjacent (in $G'$) vertices in $C$.
                If choosing $u,
                v, w$ according to these rules is impossible, then we terminate
                the process.

                Suppose that we successfully selected the vertices $u, v, w$. We
                may remove one or two of them and redistribute their weight onto
                the remaining ones according to
                \Cref{lem:process-no-empty-triangle} so that the new weights are
                still positive, the total weight does not change and neither
                $e(H)$ nor $\t(H)$ decrease.  It is clear that this process
                terminates, because each step decreases the number of vertices
                remaining in $H$.

                Let us consider the resulting weighted graph $H$. Since the
                process terminated, either no vertices of $A$ are present in
                $H$, or the remaining vertices of $C$ induce a clique. We show
                that the latter condition must hold.

                Suppose that all vertices of $A$ were removed from $H$. Denote
                by $m$ the size of the largest clique that can be formed from
                vertices remaining in $H$. Since the vertex set of $G'$ can be
                partitioned into $A$ and $O(1)$ independent sets, we have $m =
                O(1)$.  Apply \Cref{lem:approx-by-good-weighted-graph} to obtain
                a \good{} weighted subgraph $F$ of $H$, with $xy$ being its only
                non-triangular edge, such that $|F| =
                n$, $e(F) \ge e(G')$ and $\t(F) \ge \t(G')$.
                Let $\beta$ and $\gamma$ be the weights of $x$ and $y$ in $F$
                and suppose 
                that $\beta \ge \gamma$. Then $\alpha = n - \beta - \gamma$ is
                the sum of the weights of the other vertices in $F$.  We have
                $\t(F) = \beta \gamma$ and, as in
                Inequality~(\ref{eqn:bound-on-edges}) from
                \Cref{claim:large-clique}, $e(F) \le (1 - 1/m) \alpha^2 / 2 +
                \alpha \beta + \beta \gamma$.  It follows that $\t(G) \le \beta
                \gamma + O(n ^ {3 / 2} \log n)$ and $e(G) \le \alpha^2/2 +
                \alpha \beta + \beta \gamma - \Omega(n^2)$.  Consider the graph
                $G'' = G(n - \ceil{\beta} - \ceil{\gamma}, \ceil{\beta},
                \ceil{\gamma})$. It is easy to check that $\t(G'') \ge t(G) -
                O(n ^ {3 / 2} \log n)$ and $e(G'') \ge e(G) + \Omega(n ^ 2)$.
                This is a contradiction to \Cref{lem:exchange},
                since $G$ is optimal.
                
                It follows that the set of vertices in $C$ that are present in
                $H$ induces a clique. Hence, the weighted graph $H$ satisfies
                the requirements of \Cref{claim:clique-on-C}.
            \end{proof}

            Let $H$ be a weighted graph as given by \Cref{claim:clique-on-C}, so
            in particular, $e(H) \ge e(G) - O(n^{3/2} \log n)$ and $\t(H) \ge
            \t(G) + O(n^{3/2} \log n)$.
            By \Cref{lem:exchange}, since $G$ is optimal,
            \begin{align} \label{eqn:number-edges-H-close-to-G}
                \begin{split}
                    e(H) &= e(G) + O\big(n^{3/2} \log n\big), \\
                    \t(H) &= \t(G) + O\big(n^{3/2} \log n\big).
                \end{split}
            \end{align}
            We remark that these two lines express both upper and lower bounds
            for the quantities $e(H)$ and $\t(H)$.
            In the following claim we prove that, in fact, only one vertex of $C$
            is present in $H$.
            
            \begin{claim} \label{claim:structure-of-H}
                Exactly one vertex of $C$ is present in $H$. Moreover, all but
                at most $O(\approxBvx)$ vertices of $B$ are non-triangular
                neighbours of that vertex.
            \end{claim}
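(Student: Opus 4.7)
The plan is to establish both parts of the claim by comparing $H$ with an explicit graph of the form $G'' = G(a', b, c')$ and invoking \Cref{lem:exchange}, combined with the near-equalities $e(H) = e(G) + O(n^{3/2} \log n)$ and $t(H) = t(G) + O(n^{3/2} \log n)$ from~(\ref{eqn:number-edges-H-close-to-G}).  Let $c_1, \dotsc, c_k$ be the vertices of $C \cap V(H)$ with weights $\gamma_1 \ge \dotsb \ge \gamma_k$, set $\gamma = \sum_i \gamma_i$ and $b = |B|$, and let $\alpha$ be the total weight of $A \cap V(H)$, so that $\alpha + b + \gamma = n$.  The key combinatorial input is that since $\{c_1, \dotsc, c_k\}$ induces a clique in $H$, any $v \in B$ with two or more $C$-neighbours has all of its $B$-$C$ edges triangular; hence each $v \in B$ is a non-triangular neighbour of at most one $c_i$, so that the $B$-$C$ contribution to $t(H)$ is at most $\gamma_1 b$, while the non-triangular edges inside the $C$-clique contribute nothing if $k \ge 3$ and at most $\gamma_1 \gamma_2$ if $k = 2$.

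Taking $c' = \lceil \gamma \rceil$ and $a' = n - b - c'$ one has $t(G'') = b c' \ge b \gamma$ and, since $a' = \alpha - O(1)$, $e(G'') \ge \alpha^2/2 + \alpha b + b \gamma - O(n)$.  Bounding $e(H) \le \alpha^2/2 + \alpha b + \gamma b + \gamma^2/2 + e_B$ (where $e_B$ is the number of within-$B$ edges in $H$, using $e_{BC} \le \gamma b$ and $e_C \le \gamma^2/2$) and applying~(\ref{eqn:number-edges-H-close-to-G}) yields $e(G'') \ge e(G) - \gamma^2/2 - e_B - O(n^{3/2} \log n)$.  On the $t$-side, the combinatorial bound gives $t(G'') - t(G) \ge b(\gamma - \gamma_1) - t_C(H) - O(n^{3/2} \log n)$ in general, specialising to $\gamma(b - |M|) - O(n^{3/2} \log n)$ when $k = 1$, where $M$ denotes the non-triangular $B$-neighbourhood of $c_1$ in $H$.  \Cref{lem:exchange} then yields $\zeta (t(G'') - t(G)) \le \gamma^2/2 + e_B + O(n^{3/2} \log n)$ (the $\gamma^2/2$ term being absent when $k = 1$, since then $e_C = 0$).

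Granted a companion bound $e_B = O(\approxBedge)$ and the fact that $\gamma = \Omega(n)$ (which follows from $t(H) = \Omega(n^2)$, itself a consequence of the optimality of $G$ and the assumption $e(G) \le (1/2 - \delta) n^2$), the inequality above directly yields $b - |M| = O(\approxBvx)$ when $k = 1$, which is the second half of the claim.  For $k \ge 2$, the same inequality forces $\gamma_2, \dotsc, \gamma_k$ to be so small that a redistribution moving their weight onto $c_1$ produces a weighted graph still satisfying the conclusions of \Cref{claim:clique-on-C} and having only one $C$-vertex, reducing the problem to the case $k = 1$.  The principal obstacle is controlling $e_B$: a priori it could be as large as $\Theta(n^2)$, and one must show $e_B = O(\approxBedge)$.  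This should follow from the optimality of $G$---an excess of within-$B$ edges constitutes dense structure in $B$ that can be absorbed into the $A$-clique to yield a graph with strictly more edges and no fewer non-triangular edges than $G$---and making this quantitative via a further application of \Cref{lem:exchange} to a suitably modified comparison graph (in which some $B$-vertices are moved into $A$) furnishes the needed bound.
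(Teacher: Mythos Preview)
Your comparison with $G'' = G(a', b, \lceil \gamma \rceil)$ does not work for $k \ge 2$. The term you call $e_C$ (equivalently, your upper bound $\gamma^2/2$) is \emph{not} small: since the $C$-vertices in $H$ form a clique, $e_C = \sum_{i<j}\gamma_i\gamma_j$, and one can show (as the paper in fact does) that every $\gamma_i$ is $\Omega(n)$, so $e_C = \Theta(n^2)$ whenever $k \ge 2$. Your exchange inequality then reads $\zeta\bigl(t(G'') - t(G)\bigr) \le \Theta(n^2)$, which is vacuous because $t(G'') - t(G) \le b\gamma = O(n^2)$ always. Hence nothing forces $\gamma_2,\dotsc,\gamma_k$ to be small, and your ``redistribution onto $c_1$'' step is unfounded.

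The paper's argument is genuinely different and avoids this loss. After recording that the non-triangular $B$-neighbourhoods $N_1,\dotsc,N_m$ of the $C$-vertices are pairwise disjoint, it first proves $|B\setminus\bigcup N_i| = O(n^{3/4}\sqrt{\log n})$ directly (by adding edges inside a large independent subset of this remainder and applying \Cref{lem:exchange}); this, when $m=1$, \emph{is} the second assertion of the claim, so no separate control of $e_B$ is needed. For $m \ge 2$ it shows each $|N_i|,\gamma_i = \Omega(n)$, condenses each $N_i$ to a single weighted vertex $v_i$, and then performs a weight shift: add $\lambda$ to the weights of $u_1,v_1$ and subtract $\lambda$ from $u_2,v_2$. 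By construction this keeps $e$ exactly fixed while increasing $t$ by $\Omega(\lambda^2) = \Omega(n^2)$, and \Cref{lem:exchange} then gives the contradiction. The point is that the weight shift sidesteps the $\Theta(n^2)$ edge-count discrepancy that sinks your direct comparison; your proposal is missing this idea, and the sketched $e_B$ bound (which you leave as a hand-wave) would not rescue it even if supplied.
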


            \begin{proof}
                Denote by $u_1, \ldots, u_m$ the vertices in $C$ that are
                present in $H$, and let $N_1, \dotsc, N_m$ be their
                non-triangular neighbourhoods in $B$. Since the set $\{u_1,
                \dotsc, u_m\}$ forms a clique, there are no edges between
                $u_i$ and $N_j$ for $i \neq j$. In particular, the sets $N_1,
                \dotsc, N_m$ are pairwise disjoint.

                Let $Z$ be the set of vertices in $B$ that have no
                non-triangular neighbours in $H$, that is, $Z = B \setminus (N_1
                \cup \ldots \cup N_m)$. We will show that $|Z| = O(\approxBvx)$.
                Indeed, recall that $B$ is the union of $O(1)$ independent sets
                and a remainder of size at most $O(\approxAvx)$. Thus if $|Z|
                \ge \Omega(\approxBvx)$, there is an independent set $I \subseteq
                Z$ of size $\Omega(|Z|)$. Consider the weighted graph $H'$
                obtained from $H$ by adding the edges spanned by $I$.  Then
                $e(H') = e(H) + \Omega(|Z|^2) \ge e(G) - O(n^{3/2} \log n) +
                \Omega(|Z|^2)$ and $\t(H') = \t(H) \ge \t(G) - O(n ^ {3 / 2}
                \log n)$.  It follows from \Cref{lem:exchange} that $|Z| = O(n ^
                {3 / 4} \sqrt{\log n})$.

                \begin{figure}[h]\centering
                    \includegraphics{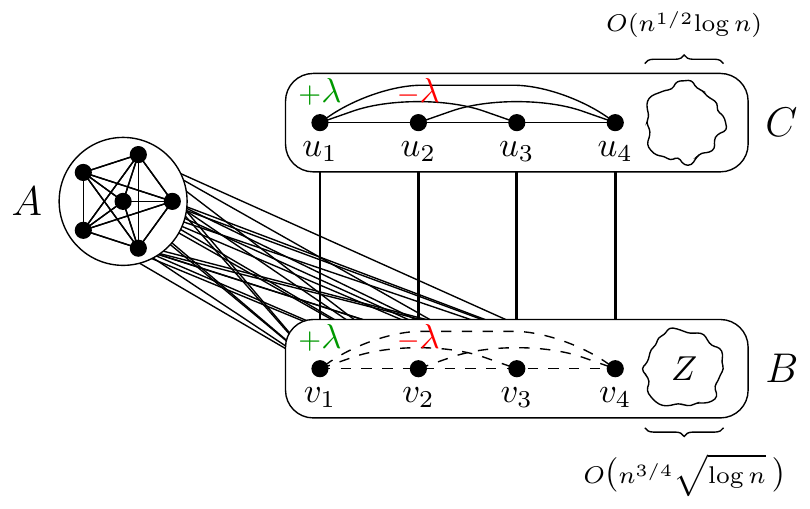}
                    \caption{The graph $H'$}
                    \label{fig:weight-shifting}
                \end{figure}

                Our aim is to prove that $m = 1$. We assume for contradiction
                that $m \ge 2$. In particular, since in $G'$ every
                vertex in $C$ is either isolated or has $\Omega(n)$
                non-triangular neighbours in $B$, the vertices $u_1, \dotsc,
                u_m$ have the latter property.  In other words, $|N_i| =
                \Omega(n)$ for every $i$.
                
                For each $i$, let $\gamma_i$ denote the weight of $u_i$ in $H$.
                We will show that $\gamma_i = \Omega(n)$ for every $i$. Indeed,
                fix any $i$. Denote by $H_i$ the weighted graph obtained from
                $H$ by adding all edges spanned by $N_i$. Since $N_i$ is an
                independent set in $H$, we have $e(H_i) \ge e(G) + \Omega(n ^
                2)$ and $\t(H_i) \ge \t(G) - |N_i| \gamma_i - O(\approxAedge)$.
                By \Cref{lem:exchange}, $\gamma_i = \Omega(n)$.

                Write $\beta_i = |N_i|$. Construct a weighted
                graph $F$, starting from $H$ and carrying out the following
                steps. Firstly, remove all edges with an end in $Z$. Secondly,
                replace each set $N_i$ by a vertex $v_i$ of weight $\beta_i$.
                Finally, connect each vertex $v_i$ to all of the vertices in $A$
                (that are present in $H$) as well as to $u_i$ and $v_j$ for
                every $j \neq i$. We have $e(F) \ge e(G) - O(\approxBedge)$ and
                $\t(F) \ge \t(H) \ge \t(G) - O(\approxAedge)$.

                Pick any real $\lambda$ such that $|\lambda| \le \min \{ \beta_1,
                \beta_2, \gamma_1, \gamma_2 \}$. Let $F_\lambda$ be the weighted
                graph obtained from $F$ by adding $\lambda$ to the weights of
                $u_1$ and $v_1$ and subtracting $\lambda$ from the weights of
                $u_2$ and $v_2$. Clearly, $|F_\lambda| = |F| = n$ and
                it is easy to check that $e(F_\lambda) = e(F)$.
                
                If $m \ge 3$, then the only non-triangular edges in $F$ are $u_i
                v_i$. Hence, in this case,
                \begin{align*}
                    \t(F_\lambda)  
                        & = \t(F) - (\beta_1 \gamma_1 + \beta_2 \gamma_2)
                                  + (\beta_1 + \lambda)(\gamma_1 + \lambda)
                                  + (\beta_2 - \lambda)(\gamma_2 - \lambda) \\
                        & = \t(F) + (\beta_1 + \gamma_1 - \beta_2 - \gamma_2)\lambda
                                  + 2\lambda ^ 2.
                \end{align*}
                If $\beta_1 + \gamma_1 \ge \beta_2 + \gamma_2$, then take
                $\lambda = \min\{ \beta_1, \gamma_1, \beta_2, \gamma_2 \}$.
                Otherwise, take $\lambda = -\min\{ \beta_1, \gamma_1, \beta_2,
                \gamma_2 \}$. In either case, $|\lambda| = \Omega(n)$ and
                $\t(F_\lambda) \ge \t(F) + \Omega(n ^ 2) \ge \t(G) + \Omega(n ^
                2)$, contradicting \Cref{lem:exchange}.

                This calculation is slightly different in the case when $m = 2$,
                because then we have to account for the edge $u_1 u_2$, which is
                also non-triangular. In this case
                \begin{align*}
                    \t(F_\lambda) 
                    & = \t(F) - (\beta_1 \gamma_1 + \beta_2 \gamma_2 +
                    \gamma_1 \gamma_2) + (\beta_1 + \lambda)(\gamma_1 + \lambda) + (\beta_2 -
                    \lambda)(\gamma_2 - \lambda) + (\gamma_1 + \lambda)(\gamma_2
                    - \lambda) \\ 
                    & = \t(F) + (\beta_1 - \beta_2)\lambda + \lambda ^ 2.
                \end{align*}
                We may reach a contradiction to \Cref{lem:exchange} by choosing
                $\gamma$ to be of the same sign as $\beta_1 - \beta_2$ and 
                $|\lambda| = \min\{\beta_1, \beta_2, \gamma_1, \gamma_2\}$.
                We conclude that $m = 1$, completing the
                proof of the claim.
            \end{proof}

            Recall that all but one vertex in $A$ has weight either $0$ or $1$
            in $H$. In fact, we may assume that there are no vertices in $H$
            with weight $0$. In the following claim we show that the remaining vertex
            cannot have very large weight.

            \begin{claim} \label{claim:weight-special-vx-A}
                If some vertex in $A$ has weight (in $H$) other than $1$, then
                that weight is at most $O(\approxBvx)$.  
            \end{claim}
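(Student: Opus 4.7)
The plan is to show that if some vertex $a^{*} \in A \cap V(H)$ has weight $\alpha \gg \approxBvx$, then a suitable redistribution of the weights of $A$ in $H$ yields a weighted graph whose integer approximation via \Cref{cor:approx-by-integer-example} contradicts the optimality of $G$ through \Cref{lem:exchange}. Write $A_{H} = A \cap V(H)$ and set $S = \alpha + |A_{H}| - 1$ for the total weight of $A$ in $H$. The weighted subgraph $H^{\diamond}$ of $G'$ is obtained by first, if $|A_{H}| = 1$, re-including some vertex of $A \setminus A_{H}$ with weight $0$ (which is allowed, as $|A|, |C| = \Omega(n)$ forces $|V(H)| < n$), and then equalising the weights of the vertices of $A$ present in $H^{\diamond}$, keeping their total equal to $S$ and leaving the weights of all other vertices unchanged. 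In particular $|H^{\diamond}| = n$.

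Because $|A| = \Omega(n) \ge 3$ and $A$ is a clique fully adjacent (in $G'$) to every vertex of $B$ that lies in $V(H)$, every edge incident to $A$ in $H^{\diamond}$ is contained in a triangle whose third vertex can be chosen inside $A$. Hence the non-triangular edges of $H^{\diamond}$ coincide with those of $H$ and none of them touches $A$, so $t(H^{\diamond}) = t(H)$. Moreover, the total weight of $A$ is preserved, so the weight sum of the edges between $A$ and $V(H^{\diamond}) \setminus A$ is the same as in $H$. The only change occurs inside the clique on $A$, and a short Cauchy--Schwarz (convexity) calculation yields
\[
    e(H^{\diamond}) - e(H) \, = \, \Omega\bigl( (\alpha - 1)^{2} \bigr).
\]

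Applying \Cref{cor:approx-by-integer-example} to $H^{\diamond}$ produces a graph $F = G(a, b, c)$ on $n$ vertices satisfying $e(F) \ge e(H^{\diamond})$ and $t(F) \ge t(H^{\diamond}) - 5n$. Combining with \eqref{eqn:number-edges-H-close-to-G} gives
\[
    e(F) \, \ge \, e(G) + \Omega(\alpha^{2}) - O(\approxAedge)
    \qquad \text{and} \qquad
    t(F) \, \ge \, t(G) - O(\approxAedge).
\]
Now suppose, for contradiction, that $\alpha \ge K \approxBvx$ for a sufficiently large absolute constant $K$. Then $\alpha^{2} \gg \approxAedge$, so we may write $e(F) \ge e(G) + x$ for some $x = \Omega(\alpha^{2})$ lying in the range required by \Cref{lem:exchange} (truncating to $x = \eps n^{2}$ in the extreme case $\alpha = \Omega(n)$). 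Since $G$ is optimal, $t(G) = t(n, e(G))$, and \Cref{lem:exchange} then forces $t(F) \le t(G) - \zeta x$, contradicting the lower bound $t(F) \ge t(G) - O(\approxAedge)$ once $\alpha^{2}$ dominates $\approxAedge$. Hence $\alpha = O(\approxBvx)$.

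The main substantive step is the edge-gain estimate $e(H^{\diamond}) - e(H) = \Omega((\alpha - 1)^{2})$ coming from equalising weights in the clique on $A$; the triangularity of all edges incident to $A$ is automatic once $|A|$ is large, which is precisely the content of \Cref{prop:large-clique,prop:structure}.
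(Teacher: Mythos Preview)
Your proof is correct and follows essentially the same approach as the paper's. Both arguments ``spread out'' the heavy weight inside the $A$-clique to gain $\Omega(\omega^{2})$ in $e$ while keeping $t$ unchanged, and then invoke the exchange lemma for a contradiction. The only cosmetic differences are: (i) the paper replaces the heavy vertex by a fresh clique of $\lfloor\omega\rfloor$ vertices each of weight $\omega/\lfloor\omega\rfloor$, whereas you equalise the weights among the existing $A$-vertices in $H$ (re-inserting one if $|A_H|=1$); your Cauchy--Schwarz computation of the gain is exactly the discrete analogue of theirs. (ii) You pass through \Cref{cor:approx-by-integer-example} to produce an integer graph $F$ before applying \Cref{lem:exchange}; this detour is unnecessary, since \Cref{lem:exchange} is stated for weighted graphs and can be applied directly to $H^{\diamond}$, as the paper does with its $H'$. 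Neither difference affects correctness.
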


            \begin{proof}
                Let $u$ be a vertex of $A$ of maximal weight in $H$, and let
                $\omega$ be its weight. Suppose that $\omega > 1$, in which case
                all other vertices in $A$ have weight $1$ in $H$.

                Replace the vertex $u$ by a clique of size $\floor{\omega}$
                whose vertices have weight $\omega / \floor{\omega}$ and are
                adjacent to all of $(A \setminus \{u\}) \cup B$ and denote the
                resulting weighted graph by $H'$. We have to check the technical
                condition that the average weight of a vertex in $H'$ is at
                least $1$. However, this can be easily verified, since the total
                weight of $H'$ is an integer and $H'$ has at most one vertex
                whose weight is smaller than $1$ (namely, the only vertex of $C$
                that remains in $H'$).

                We have $\t(H') = \t(H) \ge \t(G) - O(n ^ {3 / 2} \log n)$ and
                $e(H') \ge e(G) - O(n^{3/2} \log n) + \Omega(\omega^2) $.  It
                follows from \Cref{lem:exchange} that $\omega = O(\approxBvx)$.
            \end{proof}

            Recall that $a, b, c$ are the sizes of the sets $A, B, C$ in the
            original graph $G$. Let $\alpha, \beta, \gamma$ be the sums of
            weights (in $H$) of the vertices in these sets, summing over the
            vertices that are present in $H$. So, for example, $\beta = b$ and
            $\gamma$ is the weight of the single vertex in $C$ that is present
            in $H$. Clearly, $\alpha + \gamma = a + c$, because both sides are
            equal to $n - b$. Now we use the properties of $H$ that we have
            proved to get good bounds on $e(H)$
            and $\t(H)$ in terms of $\alpha, \beta, \gamma$.

            Recall that the set $A$ induces a clique in $G$, so its
            remainder induces a clique in $H$. Combined with
            \Cref{claim:weight-special-vx-A}, this implies that the contribution
            of the edges within $A$ to $e(H)$ is $\alpha^2 / 2 - O(n^{3/2} \log
            n)$. By \Cref{claim:structure-of-H}, the set $B$ contains an
            independent set of size at least $|B| - O(\approxBvx)$. Therefore
            the contribution of the edges within $B$ to $e(H)$ (and in
            particular to $\t(H)$) is $O(n^{7/4} \sqrt{\log n})$. Moreover,
            \Cref{claim:structure-of-H} implies that the edges between $B$ and
            $C$ contribute $\beta \gamma - O(n^{7/4} \sqrt{\log n})$ to both
            $e(H)$ and $\t(H)$. Putting this together, we get
            \begin{align} \label{eqn:number-edges-H}
                \begin{split}
                    e(H) &= \alpha^2 / 2 + \alpha \beta + \beta \gamma +
                        O\big(n^{7/4} \sqrt{\log n}\big) \\ 
                    \t(H) &= \beta \gamma + O\big(n^{7/4} \sqrt{\log n}\big). 
                \end{split}
            \end{align}
            Again, we remark that these are both upper and lower bounds for the
            quantities $e(H)$ and $\t(H)$. We deduce that $\alpha$ almost equals
            $a$ and $\gamma$ almost equals $c$.

            \begin{claim} \label{claim:size-of-C}
                $\alpha = a + O(\approxBvx)$ and $\gamma = c + O(\approxBvx)$.
            \end{claim}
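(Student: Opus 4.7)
Since $\alpha + \gamma = a + c$, the two estimates are equivalent; set $\epsilon := \gamma - c = a - \alpha$ and aim for $|\epsilon| = O(\approxBvx)$. The bound $\alpha \le a + O(\approxBvx)$ is immediate: by \Cref{claim:clique-on-C} every vertex of $A$ in $H$ has weight $1$ save at most one, whose weight is $O(\approxBvx)$ by \Cref{claim:weight-special-vx-A}; since at most $a$ vertices of $A$ appear in $H$, we get $\alpha \le a + O(\approxBvx)$, i.e.~$\epsilon \ge -O(\approxBvx)$.

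The reverse bound $\epsilon \le O(\approxBvx)$ is the crux and is obtained by comparing two expressions for $e(G)$ and $\t(G)$. Structurally, since $|A| \ge 3$ and every vertex of the clique $A$ is adjacent to every vertex of $B$, every edge incident to $A$ is triangular and every $B$-$B$ edge is triangular via $A$; hence, writing $e_{XY}, t_{XY}$ for the edge and non-triangular-edge counts between/within parts,
\[
    e(G) = \binom{a}{2} + ab + e_{BB} + e_{BC} + e_{CC}, \qquad \t(G) = t_{BC} + t_{CC},
\]
with $t_{BC} \le e_{BC}$ and $t_{CC} \le e_{CC}$. Set $m := bc - e_{BC} \ge 0$ (the number of missing $B$-$C$ edges). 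Analytically, from (\ref{eqn:number-edges-H}) and (\ref{eqn:number-edges-H-close-to-G}), using $\alpha + \gamma = n - b$ and $\alpha = a - \epsilon$, a brief calculation yields
\begin{align*}
    e_{BB} + e_{CC} &= m - a\epsilon + \tfrac{\epsilon^2}{2} + \tfrac{a}{2} + O(\approxBedge),\\
    e_{CC} \ge t_{CC} &\ge b\epsilon + m + O(\approxBedge),
\end{align*}
the second using $t_{CC} = \t(G) - t_{BC} \ge \t(G) - e_{BC}$. Combined with $e_{BB} \ge 0$, these two lines force
\[
    \epsilon^2 - 2(a+b)\epsilon + a + O(\approxBedge) \ge 0.
\]

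For $\epsilon > 0$ this quadratic is non-negative only if $\epsilon \le r_-$ or $\epsilon \ge r_+$, where $r_\pm = (a+b) \pm \sqrt{(a+b)^2 - a - O(\approxBedge)}$; since $a, b = \Theta(n)$, one checks $r_- = O(\approxBvx)$ and $r_+ = 2(a+b) - O(\approxBvx) > a \ge \epsilon$, forcing $\epsilon \le O(\approxBvx)$. The main subtlety is identifying this double count: the identity for $e(G)$ pins $e_{BB} + e_{CC}$ as an almost-linear function of $\epsilon$ with slope $-a$, while the inequality forced by $\t(G)$ grows $e_{CC}$ with slope $+b$ (the non-triangular $C$-$C$ edges must compensate for the missing $B$-$C$ edges); the two coefficients add rather than cancel, and the resulting quadratic has inadmissible range of width $\Theta(n)$, trapping $\epsilon$ near $0$.
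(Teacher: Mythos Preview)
Your proof is correct and at heart uses the same idea as the paper: both arguments boil down to the inequality $e(G)-\t(G)\ge \binom{a}{2}+ab$ (all edges touching $A$ and all $B$--$B$ edges are triangular), which combined with the formulas (\ref{eqn:number-edges-H}) and (\ref{eqn:number-edges-H-close-to-G}) gives $\alpha^2/2+\alpha b \ge a^2/2+ab - O(\approxBedge)$, forcing $\alpha\ge a-O(\approxBvx)$. The paper carries this out in two lines by looking at $e(H)-\t(H)$ directly; your route through the part-by-part decomposition, the auxiliary variable $m$ (which cancels), and the quadratic formula is a longer repackaging of the same computation --- once $m$ drops out, your inequality $\epsilon^2-2(a+b)\epsilon+a+O(\approxBedge)\ge 0$ is exactly the paper's inequality rewritten in terms of $\epsilon=a-\alpha$ (the stray $a$ being absorbed into the error term).
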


            \begin{proof}
                We can read the inequality $\alpha \le a + O(\approxBvx)$ off
                \Cref{claim:weight-special-vx-A}. To get the upper bound on
                $\alpha$, we consider the quantity $e(H) - \t(H)$. On one hand,
                the inequalities in (\ref{eqn:number-edges-H}) give
                \begin{equation*}
                    e(H) - \t(H) = \alpha^2/2 + \alpha \beta + O\big(n^{7/4}
                    \sqrt{\log n}\big).
                \end{equation*}
                On the other hand, we can use the inequalities in
                (\ref{eqn:number-edges-H-close-to-G}) to get
                \begin{align*}
                    e(H) - \t(H)
                        &= e(G) - \t(G) + O(n^{3/2} \log n) \\
                        &\ge a^2/2 + ab + O(n^{3/2} \log n),
                \end{align*}
                where the latter inequality comes from the fact that the
                quantity $e(G) - \t(G)$ counts the triangular edges in $G$,
                and all vertices in $A$ are triangular.  Recall that $b =
                \beta$.  Combining the two inequalities for $e(H) - \t(H)$ we
                get $\alpha \ge a + O(\approxBvx)$, so $\alpha = a +
                O(\approxBvx)$. To complete the
                proof of the claim, note that $a + c = \alpha + \gamma$.
            \end{proof}
            
            \Cref{prop:structure} follows from the last claim and the
            inequalities in (\ref{eqn:number-edges-H}).
        \end{proof}

    \subsection{End of the proof}

        We are now ready to complete the proof of \Cref{thm:mid-range}.

        \propFiddling*

        We gradually get closer to proving that $G \cong G(a, b, c)$. We start by
        showing that $b$ is much bigger than $c$, which leads to the conclusion
        that $C$ spans no non-triangular edges. This implies that almost all
        possible edges between $B$ and $C$ are present in $G$ and are
        non-triangular, by \Cref{prop:sizes}. In fact, using the fact that $G$
        is compressed, we deduce that there are large subsets of $B$ and of
        $C$ that span a complete bipartite graph consisting of non-triangular
        edges. With some more effort, using the optimality of $G$, we 
        conclude that $B$ and $C$ themselves induce a complete bipartite graph,
        thus completing the proof.

        \begin{proof}[ of \Cref{prop:fiddling}]
            Let $\{A, B, C\}$ be the partition of $V(G)$ given by
            \Cref{prop:structure}. As in the statement of \Cref{prop:sizes},
            write $a = |A|, b = |B|, c = |C|$.  We start by showing that $b$ is
            significantly larger than $c$.

            \begin{claim} \label{claim:b-larger-than-c}
                We have $b \ge c + \Omega(n)$.
            \end{claim}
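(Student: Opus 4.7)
The idea is to show that, by optimality of $G$, the triple $(a, b, c)$ lies within $O(n^{7/8}(\log n)^{1/4})$ of the continuous maximiser $(a^*, b^*, c^*)$ of $b'c'$ on the family $\mathcal F = \{(a',b',c') : a'+b'+c' = n,\ \binom{a'}{2} + b'(a'+c') = e(G)\}$ of non-negative real triples. Lagrange multipliers give the optimality condition $b^{*2} = c^*(n-c^*)$, and solving this together with the edge constraint shows that throughout the middle range both $c^*$ and $n/2 - c^*$ are $\Omega(n)$ (with constants depending on $\delta$); in particular $b^* - c^* = \Omega(n)$. A short computation also gives the non-degeneracy $(b'c')''(c^*) = -(3b^* + c^*)/c^* = \Theta(1)$ that will be exploited.

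Substituting $a = n - b - c$ into the identity $e(G) = a^2/2 + ab + bc + O(n^{7/4}\sqrt{\log n})$ of \Cref{prop:sizes} yields the clean formula
\begin{equation*}
    (b - c)^2 = n^2 - 2c(n - c) - 2e(G) + O\big(n^{7/4}\sqrt{\log n}\big),
\end{equation*}
so it suffices to pin $c$ close to $c^*$, since then $c \le n/2 - \Omega(n)$ and hence $c(n-c) \le n^2/4 - \Omega(n^2)$. I would do this via two matching bounds on $bc$. For a lower bound, \Cref{cor:approx-by-integer-example} applied to the \good{} weighted graph with weights $(a^*, b^*, c^*)$ produces an integer $G(a'', b'', c'')$ with $e(G(a'', b'', c'')) \ge e(G)$ and $b''c'' \ge b^*c^* - O(n)$; optimality of $G$ combined with $\t(G) = bc + O(n^{7/4}\sqrt{\log n})$ then gives $bc \ge b^*c^* - O(n^{7/4}\sqrt{\log n})$. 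For an upper bound, $(a,b,c)$ misses $\mathcal F$ by only $O(n^{7/4}\sqrt{\log n})$ in the edge coordinate, so shifting $(a,b,c) \mapsto (a+s, b, c-s)$ for $s = O(n^{3/4}\sqrt{\log n})$ (using $a = \Omega(n)$) projects onto $\mathcal F$; the value of $b'c'$ at the projected point equals $bc - bs = bc - O(n^{7/4}\sqrt{\log n})$, which is at most $b^*c^*$.

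Combining these bounds, $bc = b^*c^* + O(n^{7/4}\sqrt{\log n})$; the quadratic non-degeneracy then forces $|c - c^*| = O(n^{7/8}(\log n)^{1/4})$, whence $c \le n/2 - \Omega(n)$. Plugging into the displayed formula gives $(b - c)^2 = (b^* - c^*)^2 + O(n^{15/8}(\log n)^{1/4}) = \Omega(n^2)$, so $|b - c| = \Omega(n)$; the sign $b \ge c$ is automatic from optimality, since otherwise swapping the adjacency roles of $B$ and $C$ with respect to $A$ would yield a graph with the same non-triangular count and $a(|C| - |B|) > 0$ more edges. The main technical obstacle is just tracking the error terms carefully: the $O(n^{7/4}\sqrt{\log n})$ slack from \Cref{prop:sizes} must pass through the quadratic concavity to give an $O(n^{7/8}(\log n)^{1/4})$ displacement in $c$, which is comfortably smaller than the $\Omega(n)$ separation $b^* - c^*$.
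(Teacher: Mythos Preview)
Your approach is genuinely different from the paper's, and considerably more elaborate. The paper argues by contradiction in five lines: if $b \le c + o(n)$, then $b \le (n-a)/2 + o(n) \le n/2 - \Omega(n)$ since $a = \Omega(n)$; now compare $G(a,b,c)$ with $G(a,b+d,c-d)$ for $d = n^{0.999}$, noting $e(G(a,b,c)) = \binom{a}{2} + b(n-b)$. Since $b+d$ is still well below $n/2$, the edge count rises by $\Omega(dn)$, while $\t$ changes by only $(b-c+d)d = o(dn)$. \Cref{prop:sizes} transfers both estimates to $G$ itself, and \Cref{lem:exchange} gives the contradiction. No optimisation, no Lagrange multipliers, no second-derivative computation.

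Your route via the continuous optimum $(a^*,b^*,c^*)$ is workable in principle, but the final sign argument is wrong as written. Swapping the adjacency of $A$ from $B$ to $C$ does \emph{not} preserve the non-triangular count: edges inside $B$ were triangular in $G$ solely because of common neighbours in $A$, and may become non-triangular after the swap; conversely, edges inside $C$ (some of which may be non-triangular in $G$---this is only ruled out in \Cref{claim:in-C-only-triangular}, which \emph{uses} the present claim) all become triangular. So $\t$ can move in either direction. The fix is to compare $G$ not with a modified $G$ but with the graph $G(a,c,b)$: then $\t(G(a,c,b)) = bc = \t(G) + O(n^{7/4}\sqrt{\log n})$ by \Cref{prop:sizes}, while $e(G(a,c,b)) - e(G(a,b,c)) = a(c-b) = \Omega(n^2)$, and \Cref{lem:exchange} finishes it. Alternatively (and more in the spirit of your argument), observe that your projection $(a,b,c)\mapsto(a+s,b,c-s)$ keeps $b$ fixed, so near-optimality on $\mathcal{F}$ pins $b$ near $b^*$ just as well as it pins $c$ near $c^*$; then $b - c \approx b^* - c^* = \Omega(n)$ and no separate sign step is needed. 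Either way you should also say a word about why the maximiser on $\mathcal{F}$ is unique (your quadratic non-degeneracy is local, but you need the near-maximiser to be near \emph{the} maximiser), though this is routine once $\mathcal{F}$ is parametrised by $b'$.
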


            \begin{proof}
                Suppose to the contrary that $b \le c + o(n)$. Then we have $n =
                a + b + c \ge 2b + a - o(n)$, and hence $b \le (n - a +
                o(n))/2$. Since $a = \Omega(n)$, we can conclude that $b \le n/2
                - \Omega(n)$.

                Consider the graph $H = G(a, b, c)$. \Cref{prop:sizes}
                implies that $e(H) = e(G) + O(n^{7/4} \sqrt{\log n})$ and $\t(H)
                = \t(G) + O(n^{7/4} \sqrt{\log n})$. Consider also the
                graph $H' = G(a, b + d, c - d)$, where $d = \floor{n^{1.999}}$.
                Note that $b + d \le n/2 - \Omega(n)$.  Therefore, from the
                expression $e(H) = \binom{a}{2} + (n-b)b$ and the corresponding
                expression for $e(H')$, we can see that $e(H') \ge e(H) +
                \Omega(dn) = e(G) + \Omega(dn)$. Similarly, $\t(H') \ge \t(H) -
                o(dn) = \t(G) - o(dn)$. However, this contradicts
                \Cref{lem:exchange}, so $b = c + \Omega(n)$.
            \end{proof}

            In the following claim we conclude that $C$ spans no non-triangular
            edges.

            \begin{claim} \label{claim:in-C-only-triangular}
                There are no non-triangular edges with both ends in $C$.
            \end{claim}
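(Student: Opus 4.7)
Suppose for contradiction that $uv$ is a non-triangular edge with $u, v \in C$. Then $u$ and $v$ have disjoint neighbourhoods, and since $A$ has no edges to $C$ (\Cref{prop:structure}), $N(u), N(v) \subseteq B \cup C$, giving $\deg(u) + \deg(v) \le b + c$. Combined with \Cref{claim:b-larger-than-c}, which gives $b \ge c + \Omega(n)$, we may assume without loss of generality that $\deg(u) \le (b+c)/2 \le b - \Omega(n)$.

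The plan is to build a graph $H$ on $n$ vertices whose pair $(e(H), \t(H))$ contradicts the optimality of $G$ via \Cref{lem:exchange}. The natural modification is to delete $u$ and insert a new ``$B$-type'' vertex $u'$ adjacent to $A \cup (C \setminus \{u\})$, so that $\deg_H(u') = a + c - 1$. The edge change is $(a+c-1) - \deg(u) \ge a - (b-c)/2 - 1$, which, provided $a$ dominates $(b-c)/2$, is $\Omega(n)$. Meanwhile $u'$'s edges to $A$ are triangular (inside the clique $A$), while an edge $u'w$ with $w \in C \setminus \{u\}$ is non-triangular precisely when $w$ has no other $C$-neighbour. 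Using \Cref{prop:sizes} --- which, via the identity $e(G) - \t(G) = a^2/2 + ab + O(n^{7/4}\sqrt{\log n})$, forces the number of triangular edges inside $B \cup C$ to be $O(n^{7/4}\sqrt{\log n})$ --- together with the compressed clone decomposition of $C$ (\Cref{prop:structure}), one argues that almost all of $C \setminus \{u\}$ has no $C$-neighbour, yielding $\Omega(n)$ new non-triangular edges at $u'$.

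The main obstacle I anticipate is the careful accounting of $\t(H)$: one must balance (i) the at most $\degNonTriangular(u)$ non-triangular edges lost at $u$, (ii) the few edges of $G[C\setminus\{u\}]$ that become triangular in $H$ via $u'$, and (iii) any triangular edges of $G$ whose sole common neighbour in $G$ was $u$ and which become non-triangular in $H$. The structural estimates from \Cref{prop:sizes,prop:structure} should force each of these losses to be $o(n)$, so that $H$ strictly improves both $e$ and $\t$, contradicting optimality. If the regime where $a$ is small relative to $b - c$ blocks the single-vertex replacement, I would instead delete both $u$ and $v$ and insert two replacement vertices --- one of $B$-type, adjacent to $A \cup (C \setminus \{u,v\})$, and one of $C$-type, adjacent to $B$ --- whose total added degree $n-1$ exceeds the total removed degree $\deg(u) + \deg(v) - 1 \le b + c - 1$ by at least $a = \Omega(n)$ unconditionally; a parallel analysis of $\t$, using the same structural inputs, then finishes the contradiction.
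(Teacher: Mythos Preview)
Your opening observation is exactly right and is the whole proof: if $uv$ is non-triangular with $u,v\in C$, then $N(u),N(v)\subseteq B\cup C$ are disjoint, so $\deg(u)+\deg(v)\le b+c$. The paper finishes in two more lines. By \Cref{prop:sizes} there are $bc+o(n^2)$ non-triangular edges, all incident with $C$, so some $w\in C$ has $\degNonTriangular(w)\ge b-o(n)$; then \Cref{obs:two-vs} forces $\deg(x)\ge b-o(n)$ for every vertex $x$. Hence $\deg(u)+\deg(v)\ge 2b-o(n)>b+c$ by \Cref{claim:b-larger-than-c}, contradicting your inequality. You had the contradiction in hand and walked past it.

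Your replacement scheme is not only unnecessary, it has a real gap. To count new non-triangular edges at the inserted vertex $u'$ (adjacent to $A\cup(C\setminus\{u\})$), you need that most $w\in C$ have no $C$-neighbour. You justify this by saying \Cref{prop:sizes} bounds the \emph{triangular} edges inside $B\cup C$ by $O(n^{7/4}\sqrt{\log n})$. But that says nothing about \emph{non}-triangular edges inside $C$, and ruling those out is precisely the claim you are trying to prove. The clone decomposition of $C$ does not save you either: two distinct clone sets in $C$ could be completely joined by non-triangular edges, making every vertex of $C$ a $C$-neighbour of many others, without violating any of the inputs you cite. The same circularity infects the two-vertex variant: the non-triangular degree of the new ``$C$-type'' vertex $v'$ (adjacent to $B$) depends on how many vertices of $B$ have no $B$-neighbour, which again is not yet known at this point in the argument.
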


            \begin{proof}
                By \Cref{prop:sizes} there are $bc + o(n^2)$ non-triangular
                edges in $G$, and each one of them is incident with a vertex in
                $C$. Therefore some vertex in $C$ has at least $b - o(n)$
                non-triangular neighbours. Thus, by \Cref{obs:two-vs},
                every vertex in $C$ has degree at least $b - o(n)$, so 
                the sum of the degrees of any two vertices in $C$ is at least $2b -
                o(n) > b + c = |B \cup C|$, where the latter inequality comes from the
                previous claim. Since the vertices in $C$ have neighbours only
                in $B \cup C$, it follows that any pair of vertices in $C$ have a
                common neighbour, hence they cannot be joined by a
                non-triangular edge.
            \end{proof}

            Recall that by \Cref{prop:structure} each of the sets $B$ and $C$ can be
            partitioned into $O(1)$ sets of clones and a remainder of size
            $O(n^{1/2} \log n)$. In such a partition of $B$ consider the sets of
            clones of size at least $n^{9/10}$ and let $B'$ be their union.
            Similarly, let $C'$ be the union of the sets of clones in the
            partition of $C$ that have size at least $n^{9/10}$ and denote $Z =
            C \setminus C'$ and $Y = B \setminus B'$. Then $|Y| = O(n^{9/10})$
            and $|Z| = O(n^{9/10})$. We show that all possible edges between
            $B'$ and $C'$ are present in $G$ and are non-triangular.
            
            \begin{claim} \label{claim:B'-C'-complete}
                All possible edges between $B'$ and $C'$ are present in $G$ and
                are non-triangular.
            \end{claim}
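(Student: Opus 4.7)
The plan is to show that the number of ``defective'' pairs in $B \times C$ --- pairs that are not edges of $G$, or are edges closing a triangle --- is at most $O(\approxBedge)$, and then to use the clone structure of $B'$ and $C'$ to rule such pairs out entirely within $B' \times C'$.

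First I would argue that every non-triangular edge of $G$ lies between $B$ and $C$. Indeed, edges inside the clique $A$ are triangular (using $|A| = \Omega(n) \ge 3$); edges between $A$ and $B$, and inside $B$, close triangles through any vertex of $A$; there are no edges between $A$ and $C$ by \Cref{prop:structure}; and \Cref{claim:in-C-only-triangular} rules out non-triangular edges inside $C$. Writing $t_{BC}$ for the number of non-triangular edges between $B$ and $C$, the estimate $\t(G) = bc + O(\approxBedge)$ of \Cref{prop:sizes} and the trivial inequality $t_{BC} \le bc$ give
\[
  bc - t_{BC} \, = \, O(\approxBedge),
\]
and the left-hand side counts exactly the defective pairs in $B \times C$.

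Next I would exploit the fact that $B'$ (respectively $C'$) is a union of $O(1)$ sets of clones of size at least $n^{9/10}$. Fix any such clone sets $B_1 \subseteq B'$ and $C_1 \subseteq C'$. Since all vertices of $B_1$ share the same neighbourhood in $G$, and the same is true for $C_1$, whether a pair $(u, v) \in B_1 \times C_1$ is an edge depends only on the pair $(B_1, C_1)$; and when it is an edge, the set of common neighbours $N(u) \cap N(v)$ likewise depends only on $(B_1, C_1)$. Hence the edges between $B_1$ and $C_1$ are either all present and non-triangular, all present and triangular, or all absent. If any pair $(B_1, C_1)$ were of one of the latter two types, it would contribute at least $|B_1||C_1| \ge n^{9/5}$ defective pairs to $B \times C$, contradicting the bound above since $n^{9/5}$ dominates $\approxBedge = n^{7/4}\sqrt{\log n}$ for large $n$. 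Taking a union over all pairs of clone sets in $B'$ and $C'$ then yields the claim. The main obstacle is the all-or-nothing observation for clone pairs; once that is in place, the argument reduces to a comparison of polynomial exponents, with comfortable room to spare.
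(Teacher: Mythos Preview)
Your argument is correct and is essentially the same as the paper's: both use \Cref{claim:in-C-only-triangular} to localise all non-triangular edges to $B\times C$, invoke the estimate $\t(G)=bc+O(\approxBedge)$ from \Cref{prop:sizes}, and then observe that a single defective pair in $B'\times C'$ forces an entire $n^{9/10}\times n^{9/10}$ block of defective pairs via the clone structure, contradicting that bound. The paper compresses your trichotomy into the single remark that the two clone sets would have no non-triangular edges between them, but the content is identical.
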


            \begin{proof}
                From the previous claim we know that every non-triangular edge
                in $G$ has one end in $B$ and one end in $C$.  Suppose that
                there is a pair of vertices, one in $B'$ and one in $C'$, that
                are not joined by a non-triangular edge. Then there are two sets
                of clones of size at least $n^{9/10}$, one contained in $B'$ and
                the other in $C'$, between which there are no non-triangular
                edges. But then $\t(G) \le bc - n^{9/5}$, contradicting
                \Cref{prop:sizes}.
            \end{proof}

            In the following claim we obtain additional information about $Y$
            and $Z$, thereby we get close to showing that $B$ and $C$ induce a complete
            bipartite graph.

            \begin{claim} \label{claim:B'-C'-indep}
                There are no edges between $B'$ and $Y$ and between
                $C'$ and $Z$. Moreover, every vertex in $B$ has at
                least $c - o(n)$ neighbours in $C$, and every vertex in $C$ has
                at least $b - o(n)$ neighbours in $B$.
            \end{claim}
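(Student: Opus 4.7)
The proof falls into three parts: showing there are no $B'$-$Y$ edges (part one), showing there are no $C'$-$Z$ edges (part two, symmetric to part one), and deducing the degree bounds. For part one, I argue by contradiction. Suppose $v \in Y$ is adjacent to some $u \in B'$, and let $I \subseteq B'$ be the clone set containing $u$ (so $|I| \ge n^{9/10}$). By the clone property, $v$ is adjacent to all of $I$. For any $w \in C'$, the edge $uw$ is non-triangular by \Cref{claim:B'-C'-complete}, which rules out the edge $vw$ (else $vuw$ would be a triangle). Hence $v$ has no $C'$-neighbours, giving $|N_C(v)| \le |Z| = O(n^{9/10})$; since all non-triangular edges incident to $v$ go to $C$, we obtain $\degNonTriangular(v) = O(n^{9/10})$. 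Fixing any $w \in C'$, \Cref{claim:B'-C'-complete} gives $\degNonTriangular(w) \ge |B'| = b - O(n^{9/10})$, so \Cref{obs:two-vs} applied to $v$ and $w$ forces $\deg(v) \ge \deg(w) - 1$.

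Now consider $G' = G - v + v'$, where $v'$ is a new vertex with neighbourhood $N_G(w)$ (well-defined since $v \notin N_G(w)$). Then $e(G') - e(G) = \deg(w) - \deg(v) \le 1$. The crucial observation for the non-triangular count is that no non-triangular edge of $G$ has both endpoints in $N_G(w)$: any such pair would have $w$ itself as a common neighbour, making the edge triangular. Consequently the addition of $v'$ cannot turn any existing non-triangular edge triangular, and a routine common-neighbour calculation (using that $v$ has been removed and that $v'$ is not adjacent to $w$) shows that $v'$ has at least as many non-triangular edges in $G'$ as $w$ has in $G$, namely at least $b - o(n)$. Combined with the loss of at most $\degNonTriangular(v) = O(n^{9/10})$ non-triangular edges incident to $v$, this yields $\t(G') \ge \t(G) + \Omega(n)$. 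If $e(G') \ge e(G)$ we directly contradict optimality; otherwise, since $G'$ has $\binom{n}{2} - e(G') = \Omega(n^2)$ non-edges, add at most $|e(G)-e(G')| \le n-1$ arbitrary non-edges of $G'$ to obtain $G''$ with $e(G'') = e(G)$. Since edge-additions cannot turn a non-triangular edge into a triangular one, $\t(G'') \ge \t(G') > \t(G)$, again contradicting optimality. Part two is entirely symmetric.

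For the degree bounds, note that $v \in B'$ lies in a clone set of $B'$ and is therefore adjacent to all of $C'$ by \Cref{claim:B'-C'-complete}, so $|N_C(v)| \ge |C'| = c - O(n^{9/10})$. For $v \in Y$, the absence of $B'$-$Y$ edges just established gives $|N_B(v)| \le |Y| - 1 = O(n^{9/10})$. Applying \Cref{obs:two-vs} to $v$ and any $u \in B'$ yields two cases: either $\degNonTriangular(v) \ge \degNonTriangular(u) - 1 \ge c - o(n)$, so $|N_C(v)| \ge \degNonTriangular(v) \ge c - o(n)$; or $\deg(v) \ge \deg(u) - 1 \ge a + c - o(n)$, whence $|N_C(v)| = \deg(v) - a - |N_B(v)| \ge c - o(n)$. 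The analogous argument, swapping the roles of $B$ and $C$, gives the lower bound on $|N_B(v)|$ for $v \in C$. The main obstacle throughout lies in the swap of part one: a priori the edge loss from replacing a possibly high-degree $v$ by a clone of $w$ could overwhelm the gain in non-triangular edges, and what makes the bookkeeping clean is the fact that $w$ is a common neighbour of every pair inside $N_G(w)$, which kills every non-triangular edge there and lets \Cref{obs:two-vs} and a simple edge-addition recovery step close the contradiction.
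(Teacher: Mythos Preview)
Your vertex-replacement argument contains a genuine error. The assertion ``edge-additions cannot turn a non-triangular edge into a triangular one'' is simply false: adding an edge $xy$ makes every previously non-triangular edge $xz$ (with $z$ adjacent to $y$) triangular. In your ``otherwise'' branch you may need to add as many as $\deg(v)-\deg(w)$ edges, and since $v\in B$ is adjacent to all of $A$ while $w\in C'$ has no neighbours in $A$, this difference can be of order $a=\Omega(n)$. Each such added edge can destroy $\Omega(n)$ non-triangular edges, so your gain of $\Omega(n)$ in $\t$ can be wiped out many times over. Observation~\ref{obs:two-vs} only gives you the one-sided bound $\deg(v)\ge\deg(w)-1$; it does not prevent $\deg(v)$ from being much larger than $\deg(w)$, so you cannot escape the ``otherwise'' branch.

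The paper avoids the swap entirely and treats the two halves in a specific order that is \emph{not} symmetric. It first handles $C'$--$Z$: since every vertex has degree at least $|B'|=b-o(n)$ (via Observation~\ref{obs:two-vs} applied to a $C'$-vertex) and $|C|+|Y|\le c+o(n)<b-o(n)$ by \Cref{claim:b-larger-than-c}, every vertex of $C$ must have a neighbour in $B'$; any $C'$--$Z$ edge would then form a triangle with a non-triangular $B'$--$C'$ edge, a contradiction. Only after this does the paper attack $B'$--$Y$: a vertex $u\in Y$ has a non-triangular neighbour in $C$, which (by the $C$-degree bound just proved) has $b-o(n)$ neighbours in $B$, forcing $u$ to have only $o(n)$ neighbours in $B$. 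If moreover $u$ had a $B'$-neighbour, then (as you observed) $u$ would have no $C'$-neighbours, giving $\deg(u)\le a+o(n)$ and $\degNonTriangular(u)=o(n)$. Comparing with a $B'$-vertex, which has degree at least $a+c-o(n)$ and non-triangular degree at least $c-o(n)$, both alternatives in Observation~\ref{obs:two-vs} fail---a direct contradiction with no swap needed. Your degree-bound paragraph at the end is fine once the two ``no edges'' statements are in place.
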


            \begin{proof}
                Any vertex in $C'$ has at least $|B'| = b - o(n)$ non-triangular
                neighbours, so \Cref{obs:two-vs} implies that every vertex in $G$
                has degree at least $b - o(n)$. 
                \Cref{claim:b-larger-than-c} implies that $|C| + |B \setminus
                B'| \le c + o(n)$ is smaller than this quantity, hence every vertex in
                $C$ has a neighbour in $B'$. Therefore, because all possible
                edges between $B'$ and $C'$ are present in $G$ and are non-triangular (by
                \Cref{claim:B'-C'-complete}), there are no edges between $C'$
                and $Z$. In particular, every vertex in $C$ has at
                most $o(n)$ neighbours in $C$, so it has at least $b - o(n)$
                neighbours in $B$.

                Pick any vertex $u \in Y$. Since $u$ is not in
                $A$, $u$ has a non-triangular neighbour $v \in C$.  We have just
                proved that $v$ has at least $b - o(n)$ neighbours in $B'$.
                Therefore, $u$ has at most $o(n)$ neighbours in $B$.  Now
                suppose that $u$ is adjacent to a vertex in $B'$. Then
                $u$ has no neighbours in $C'$. Hence, $u$ has at most $a +
                o(n)$ neighbours, of which at most $o(n)$ are non-triangular.
                However, any vertex of $B'$ has at least $a + c - o(n)$
                neighbours and at least $c - o(n)$ of them are non-triangular.
                This contradicts \Cref{obs:two-vs}.

                It remains to verify that every vertex in $Y$ has at
                least $c - o(n)$ neighbours in $C$. Let $w \in Y$
                and denote by $d$ the number of neighbours of $w$ in $C$.
                Then the degree of $w$ is at most $a + d + o(n)$ and its
                non-triangular degree is at most $d$. By 
                \Cref{obs:two-vs}, applied to $w$ and any vertex in $B'$, we know
                that $a + d \ge a + c - o(n)$ or $d \ge c - o(n)$. In either
                case $d \ge c - o(n)$.
            \end{proof}

            In the following claim we prove that no edges are spanned by $Z$.
            We use a trick that we have used several times before, replacing a
            pair of adjacent vertices in $Z$ by copies of vertices in $A$ and
            $B'$, thus increasing both the number of edges and of triangular
            edges.

            \begin{claim} \label{claim:C-indep}
                $Z$ spans no edges.
            \end{claim}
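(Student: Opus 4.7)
The plan is to derive a contradiction with the optimality of $G$ by replacing an adjacent pair $u, v \in Z$ with one vertex $x$ playing the role of an $A$-vertex and one vertex $y$ playing the role of a $B'$-vertex, and showing that the resulting graph $H$ strictly beats $G$ in both $e$ and $\t$.

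The first key step is to extract two structural facts from the earlier claims. First, by \Cref{claim:B'-C'-indep}, each of $u, v$ has at least $b - O(n^{9/10})$ neighbours in $B$, so by inclusion-exclusion $|N_B(u) \cap N_B(v)| \ge b - O(n^{9/10})$. Any $w$ in this common neighbourhood certifies that both $uw$ and $vw$ are triangular (with $v$ respectively $u$ the third vertex), and since by \Cref{claim:in-C-only-triangular} neither $u$ nor $v$ has non-triangular neighbours in $C$, we conclude $\deg_{\text{Non-}\Delta}(u) + \deg_{\text{Non-}\Delta}(v) = O(n^{9/10})$. Second, $B'$ is an independent set: given $w_1, w_2 \in B'$, pick any $c' \in C'$; by \Cref{claim:B'-C'-complete} $c' \sim w_1, c' \sim w_2$ and both edges are non-triangular, so if $w_1 \sim w_2$ then $w_2$ would be a common neighbour of $c'$ and $w_1$, a contradiction.

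The construction of $H$: delete $u$ and $v$, and introduce vertices $x$ and $y$ with $N_H(x) = A \cup B$ and $N_H(y) = B'$ (in particular $x \not\sim y$). For the edge count, edges gained total $a + b + |B'| = a + 2b - O(n^{9/10})$ while edges lost are at most $\deg(u) + \deg(v) \le 2b + O(n^{9/10})$, giving $e(H) \ge e(G) + a - O(n^{9/10}) > e(G)$ since $a = \Omega(n)$. For the non-triangular count, $x$'s edges are all triangular because every pair in $A \cup B$ has a common neighbour in $A$, while every edge $yw$ with $w \in B'$ is non-triangular because $B'$ is independent and $y$ has no neighbours outside $B'$, so $y$ and $w$ share no common neighbour. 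Hence new non-triangular edges number $|B'| = b - O(n^{9/10})$, while non-triangular edges incident to $u$ or $v$ that we destroy number $O(n^{9/10})$.

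The step requiring the most care is verifying that no previously non-triangular edge of $G \setminus \{u, v\}$ becomes triangular in $H$: the only potential new common neighbours are $x$ and $y$, so such an edge $pq$ would need both endpoints in $A \cup B$ (for $x$) or both in $B'$ (for $y$); but the former would be triangular in $G$ already (through $A$, since $A$ is a clique adjacent to all of $B$), and the latter does not exist since $B'$ is independent. Combining, $\t(H) \ge \t(G) + b - O(n^{9/10}) > \t(G)$, contradicting the optimality of $G$. The main obstacle is identifying the right construction, in particular spotting that $B'$ must be independent (which licenses $y$ to contribute $|B'|$ non-triangular edges in one stroke) and keeping $x$ and $y$ non-adjacent to prevent $x$ from accidentally triangulating the $y$-$B'$ edges.
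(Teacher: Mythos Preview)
Your proof is correct and follows essentially the same approach as the paper: replace an adjacent pair $u,v\in Z$ by a new ``$A$-type'' vertex $x$ with neighbourhood $A\cup B$ and a new vertex $y$ with neighbourhood $B'$, then check $e$ and $\t$ both strictly increase. Your write-up is in fact more careful than the paper's---you explicitly verify that $B'$ is independent, that $x\not\sim y$, and that no surviving non-triangular edge is accidentally triangulated by $x$ or $y$, all of which the paper leaves implicit. One small point you could make more explicit: your bound $\deg(u)+\deg(v)\le 2b+O(n^{9/10})$ relies on the fact (from \Cref{claim:B'-C'-indep}) that there are no edges between $Z$ and $C'$, so the $C$-neighbours of $u,v$ lie in $Z$; you cite the claim but do not spell out this particular consequence.
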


            \begin{proof}
                Suppose to the contrary that $Z$ contains a pair of
                adjacent vertices $u, v$. All of the non-triangular neighbours
                of $u$ are in $B$ and they are not adjacent to $v$. By
                \Cref{claim:B'-C'-indep}, $v$ has $b - o(n)$ neighbours in
                $B$, and hence $u$ has at most $o(n)$ non-triangular neighbours.
                Likewise, $v$ has at most $o(n)$ non-triangular neighbours.

                Consider the graph $G'$ obtained from $G$ by removing $u,v$ and
                adding new vertices $x$ and $y$, where $x$ is joined by edges to
                all vertices in $A \cup B$, and $y$ is joined to all vertices
                in $B'$ (in other words, $u$ and $v$ are replaced by two new vertices,
                one in $A$ and one in $B'$). We have $e(G') \ge e(G) + a - o(n)$
                and $\t(G') \ge \t(G) + b - o(n)$. This contradicts the
                optimality of $G$, because $a = \Omega(n)$ and $b = \Omega(n)$.
            \end{proof}

            A similar trick enables us to conclude that $Y$ spans no edges, but
            here we replace all non-isolate vertices in $Y$ with suitable copies
            of other vertices.
        
            \begin{claim} \label{claim:B-indep}
                There are no edges with both ends in $Y$.
            \end{claim}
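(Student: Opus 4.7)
The plan is to argue by contradiction: suppose that $G[Y]$ contains at least one edge, and let $Y_0 \subseteq Y$ be the set of vertices of $Y$ with a neighbour in $Y$, so that $|Y_0| \ge 2$. I shall form $G'$ by deleting all of $Y_0$ and inserting $|Y_0|$ new vertices, each joined to $A \cup (B \setminus Y_0)$ and to the other new vertices, so that they act as additional triangular vertices (extending the clique $A$). The contradiction will then come from \Cref{lem:exchange}, in the spirit of (though opposite in direction to) \Cref{claim:C-indep}.

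The key preparatory estimate is $\degNonTriangular(u) = o(n)$ for every $u \in Y_0$. Since $A$ consists of triangular vertices, any non-triangular neighbour of $u$ lies in $B \cup C$. By \Cref{claim:B'-C'-indep} there are no edges from $Y$ to $B'$, so the contribution from $B$ is at most $|Y| = o(n)$. For the contribution from $C$, fix some neighbour $v \in Y_0$ of $u$: by \Cref{claim:B'-C'-indep}, $v$ has at least $c - o(n)$ neighbours in $C$, and every such vertex that is also a neighbour of $u$ witnesses a triangle on $uv$. Hence the number of non-triangular neighbours of $u$ in $C$ is at most $|C| - (c - o(n)) = o(n)$.

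For the main computation, each $u \in Y_0$ satisfies $\deg(u) = a + c + o(n)$ (all of $A$, at most $|Y|-1 = o(n)$ of $B$, and between $c - o(n)$ and $c$ of $C$), while each new vertex has degree $a + b - 1$. A direct count, using $|Y_0| \le |Y| = O(n^{9/10})$, gives
\begin{equation*}
    e(G') - e(G) \;\ge\; |Y_0|(b - c) - o(|Y_0| n) \;=\; \Omega(|Y_0| n),
\end{equation*}
where the last step invokes \Cref{claim:b-larger-than-c}. Each new vertex joins the clique $A$ and is therefore triangular, so contributes no non-triangular edges; meanwhile removing $Y_0$ destroys at most $\sum_{u \in Y_0} \degNonTriangular(u) = o(|Y_0| n)$ non-triangular edges, and deletions can only turn other triangular edges into non-triangular ones. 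Hence $\t(G') \ge \t(G) - o(|Y_0| n)$.

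Since $G$ is optimal we have $\t(G) = \t(n, e(G))$, so applying the first part of \Cref{lem:exchange} to $G'$ with $e = e(G)$ and $x = e(G') - e(G) = \Omega(|Y_0| n)$ yields $\t(G') \le \t(n, e(G)) - \zeta x = \t(G) - \Omega(|Y_0| n)$, contradicting $\t(G') \ge \t(G) - o(|Y_0| n)$ for $n$ large. The main obstacle is obtaining the non-triangular degree estimate $\degNonTriangular(u) = o(n)$ on $Y_0$; once that is in hand, the rest of the argument is the standard replacement-plus-exchange scheme already used in \Cref{claim:C-indep}.
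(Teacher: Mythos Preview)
Your preparatory estimate $\degNonTriangular(u) = o(n)$ for $u \in Y_0$ is correct and matches the paper's argument exactly, and your replacement scheme correctly yields $e(G') - e(G) = \Omega(|Y_0|\,n)$ and $\t(G') \ge \t(G) - o(|Y_0|\,n)$. The gap is in the last step, where you invoke \Cref{lem:exchange}. That lemma applies only when $x \ge Cn$ for a constant $C = C(\delta)$ fixed once and for all. You have $x = e(G') - e(G) \ge c_1\,|Y_0|\,n$, where $c_1$ is the implicit constant in $b - c = \Omega(n)$ from \Cref{claim:b-larger-than-c}. Nothing rules out $|Y_0| = 2$, and then you only know $x \ge 2c_1 n$; since $c_1$ and $C$ are determined by completely independent pieces of the argument, there is no reason why $2c_1 \ge C$. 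So the exchange lemma may simply fail to apply, and you get no contradiction.

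The paper avoids the exchange lemma here altogether. Rather than sending all $|Y_0|$ replacement vertices into $A$, it sends $\lfloor |Y_0|/2 \rfloor$ of them to $A$ (joined to $A \cup B'$) and $\lceil |Y_0|/2 \rceil$ to $B'$ (joined to $A \cup C'$). The vertices placed in $B'$ each pick up $|C'| = c - o(n)$ new non-triangular edges, so now $\t(G') \ge \t(G) - o(|Y_0|\,n) + \Omega(|Y_0|\,n) > \t(G)$, while one still has $e(G') > e(G)$ by the same edge count (the split costs only a factor of $2$ in the $(b-c)$ term). This contradicts optimality directly for every $|Y_0| \ge 2$, with no threshold condition to verify. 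Your argument is easily repaired the same way: route a fixed positive fraction of the new vertices into $B'$ instead of $A$.
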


            \begin{proof}
                Let $S$ be the set of vertices in $Y$ that have a
                neighbour in $Y$. Our aim is to prove that $S =
                \emptyset$. To this end, write $m = |S|$ and suppose that $m
                \neq 0$. Then $2 \le m = o(n)$. Let $G'$ be the graph
                obtained from $G$ by removing all vertices in $S$, adding
                $\floor{m/2}$ new vertices to $A$ and adding $\ceil{m/2}$ new
                vertices to $B'$. More precisely, we add $\floor{m/2}$ new
                vertices adjacent to all of $A \cup B'$ and $\ceil{m/2}$ more
                new vertices adjacent to all of $A \cup C'$.

                Let us compare $e(G')$ and $\t(G')$ with $e(G)$ and $\t(G)$. It
                follows from \Cref{claim:B'-C'-complete,claim:B'-C'-indep} that
                if an edge in $G$ has both ends in $B$, then in fact it has 
                both ends in $S$. In particular, the removal of $S$ decreases
                the total number of edges by at most $(a + c)m + \binom{m}{2} =
                (a + c + o(n))m$. On the other hand, the addition of the new
                vertices increases this number by $(a + c - o(n))m + (b -
                c)\floor{m/2} \ge (a + c + \Omega(n))m$. Therefore, $e(G') \ge
                e(G) + \Omega(mn) > e(G)$.

                Moreover, in $G$, every vertex in $S$ has at most $o(n)$
                non-triangular neighbours, because every vertex in $S$ is
                adjacent to another vertex in $S$, which has at least $c - o(n)$
                neighbours in $C$. Therefore, the removal of $S$ decreases the
                number of non-triangular edges by at most $o(mn)$. On the other
                hand, the addition of new vertices increases this number by $(b
                - o(n)) \ceil{m/2} = \Omega(mn)$. Therefore, $\t(G') > \t(G)$
                and this contradicts the optimality of $G$. This means that $m =
                0$, so $Y$ spans no edges.
            \end{proof}

            \Cref{prop:fiddling} easily follows from
            \Cref{claim:B'-C'-complete,claim:B'-C'-indep,claim:C-indep,claim:B-indep}.
            Indeed, these claims together imply that $B$ and $C$ are independent sets
            in $G$. Therefore, if there were missing edges between $B$ and $C$,
            we could add them to $G$ without creating new triangles.
            Since $G$ is an optimal graph, all possible edges between
            $B$ and $C$ are present. It follows that $G \cong G(|A|, |B|, |C|)$.
        \end{proof}

\section{Almost complete} \label{sec:almost-complete}

    In this section we prove \Cref{thm:almost-complete}.
    
    \thmAlmostComplete*

    The proof in this range is easier than the other two ranges, though far from
    immediate. We start by proving that $G \cong G(a, b, c)$ for some $a, b, c$
    if there are only very few (at most $2n - 8$) missing edges. For the
    remaining range, we obtain a partition of the vertices into sets $A, B, C$
    according to the degrees of the vertices and aim to show that $G \cong
    G(|A|, |B|, |C|)$.
    We first prove that the sets have
    the correct orders of magnitude using a rough lower bound on $\t(G)$. We are
    then able to prove better estimates for the size of the sets, and, finally,
    we deduce that $G$ has the required structure.

    \begin{proof}[ of \Cref{thm:almost-complete}]

        Fix a small constant $\delta > 0$ (whose
        value can be worked out from the proof) and let $G$ be an optimal
        graph with $n$ vertices and $\binom{n}{2} - \myeta n^2$
        edges, where $0 \le \myeta \le \delta$. 
        We may assume that the set of \triangularVs{} induces a clique and all
        its vertices have the same neighbourhood among the remaining vertices
        (see e.g.~\Cref{lem:granulation}).
        As usually, we assume that $n$
        is large enough to satisfy all inequalities that we write down in the
        proof.

        We first consider the case $e(G) \ge \binom{n}{2} - (2n-9)$. 

        \begin{claim}
            If $e(G) \ge \binom{n}{2} - (2n - 9)$ then $G \cong G(a, b, c)$ for
            some $a, b, c$.
        \end{claim}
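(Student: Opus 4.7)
Since $e(G) < \binom{n}{2}$, I first observe $\t(G) \ge 1$: otherwise $K_n$ Pareto-dominates $G$ (same $\t$, strictly more edges), contradicting the optimality of $G$. Let $H = \comp{G}$, so by hypothesis $e(H) \le 2n - 9$. Fix any non-triangular edge $uv$ of $G$; since $N_G(u)$ and $N_G(v)$ are disjoint subsets of $V(G) \setminus \{u,v\}$, we have $\deg_H(u) + \deg_H(v) \ge n - 2$, so some vertex $c^* \in \{u,v\}$ satisfies $\deg_H(c^*) \ge (n-2)/2$. The plan is to show $G \cong G(|A|, |B|, 1)$ with the singleton $C = \{c^*\}$.

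Set $B := N_G(c^*)$ and $A := V(G) \setminus (B \cup \{c^*\})$. I would verify in turn: (i) $B$ is independent in $G$, so every edge from $c^*$ to $B$ is non-triangular; (ii) $A$ induces a clique in $G$; (iii) every $A$--$B$ edge is present; and (iv) no non-triangular edge of $G$ avoids $c^*$. Statements (i)--(iii) follow by local optimality swaps in the spirit of the endgame of \Cref{prop:fiddling-almost-bip}: a missing edge in $G[A]$ or between $A$ and $B$ can be added without creating a non-triangular edge (strictly increasing $e(G)$), and a $B$--$B$ edge can be deleted and one of its endpoints reattached so as to create a new $c^*$-incident non-triangular edge (strictly increasing $\t(G)$). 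Either move would contradict optimality, so (i)--(iii) hold; the order of verification matters and must be done carefully to keep the swap arguments valid.

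The main obstacle is (iv). A non-triangular edge not incident to $c^*$ would produce a second vertex $c^{**}$ with $\deg_H(c^{**}) \ge (n-2)/2$, and the $H$-edges incident to $\{c^*, c^{**}\}$ alone contribute at least $n - 3$. One then has to account for the additional $H$-edges forced by the disjoint-neighborhood condition at both $c^*$ and $c^{**}$, which would force $e(H)$ strictly above $2n - 9$. This is consistent with the identity $e(\comp{G(a,b,c)}) = \binom{b}{2} + \binom{c}{2} + ac$, whose minimum over $a + b + c = n$ with $b \ge c \ge 2$ equals $2n - 6$: any $G \cong G(a,b,c)$ in our range is forced to have $|C| \le 1$. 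Rigorously carrying out this counting, together with the bookkeeping of all induced non-triangular edges, is the technical heart of this case.
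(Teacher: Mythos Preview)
Your counting plan for step (iv) has a real gap. You fix a non-triangular edge $c^* d^*$ with $\deg_H(c^*) \ge (n-2)/2$, then assume a non-triangular edge $xy$ avoids $c^*$. But $xy$ may share the vertex $d^*$ with your first edge: say $xy = d^* z$. In this ``path'' configuration $\{c^*, d^*, z\}$, a vertex $w$ outside this set need only miss one of $c^*, d^*$ and one of $d^*, z$; if $w$ is a non-neighbour of $d^*$, a single missing edge covers both constraints. A short computation shows the $H$-edges you can force from $\{c^*, d^*, z\}$ total only about $3n/2$, not $2n-8$, so the hypothesis $e(H) \le 2n-9$ is not violated. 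Your sketch never isolates two \emph{vertex-disjoint} non-triangular edges, which is exactly what makes the count go through.

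The paper's argument avoids this by proving the weaker (and sufficient) statement first: there are no two \emph{independent} non-triangular edges. If $uv$ and $xy$ are vertex-disjoint non-triangular edges, then every vertex $w \notin \{u,v,x,y\}$ misses at least one of $u,v$ and at least one of $x,y$, giving $2(n-4) = 2n-8$ missing edges outright --- a one-line contradiction. Hence the non-triangular edges form a star centred at some vertex $u$ (not chosen in advance), and only then does one read off the $G(a,b,c)$ structure from the standing assumption that the triangular vertices form a clique with a common external neighbourhood. Committing to a specific centre $c^*$ via complement-degree before knowing the star structure is what forces you into the bad case; your swap arguments for (i)--(iii) also tacitly rely on (iv) already holding (e.g.\ adding a missing $A$--$A$ edge could destroy a non-triangular edge incident to $A$ unless you already know there are none). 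The fix is simply to prove the star property first, as the paper does, and let the centre reveal itself.
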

        
        \begin{proof}
            If $G$
            has no non-triangular edges, then by optimality, it is a clique, so
            we are done ($G = G(n, 0, 0)$).  We claim that $G$ does not have two
            independent non-triangular edges.  Indeed, if $uv$ and $xy$ are such
            edges, then for any other vertex $w$ one of the two possible edges
            $uw$ and $vw$ is missing, as well as one of $xw$ and $yw$. Therefore
            $G$ has at least $2n - 8$ missing edges, contradicting our
            assumption.  Therefore, since the triangle-free edges cannot form a
            triangle, they form a star. Let $uv_1, \dotsc, uv_k$ be the
            non-triangular edges. Then the set $A = V(G) \setminus \{u, v_1,
            \dotsc, v_k\}$ is the set of \triangularVs{} vertices in $G$, so $A$
            induces a clique and all of the vertices in $A$ have the same
            neighbourhood in $V(G) \setminus A$. So there are two possibilities:
            either $u$ is adjacent to all of $A$, or $u$ is not adjacent to any
            vertex in $A$. In the former case, there are no edges between $A$
            and $\{u_1, \dotsc, u_k\}$, so $G \cong G(|A|, 1, k)$.  In the
            latter case, optimality of $G$ implies that all possible edges
            between $A$ and $\{u_1, \dotsc, u_k\}$ are present in $G$, so $G
            \cong G(|A|, k, 1)$.
        \end{proof}

        From this point onwards we assume that $e(G) \le \binom{n}{2} -
        (2n-8)$.  In particular, $\myeta \ge (2 - o(1))/n$. We wish to prove
        that $G$ is isomorphic to the graph $G(a, b, c)$ for some parameters
        $a, b, c$. It is easy to see that these parameters should be
        approximately equal to $b \approx \sqrt{2 \myeta / 3} n$ and $c
        \approx (2 \myeta / 3) n$, because this is when $bc$ is maximised
        subject to conditions $n = a + b + c$ and $cn + \binom{b}{2} \le
        \myeta n$. We use this observation to get a lower bound for $\t(G)$.

        \begin{claim}
            $\t(G) = \Omega(\myeta^{3/2} n^2)$.
        \end{claim}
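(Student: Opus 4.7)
The plan is to use the optimality of $G$: it suffices to exhibit a comparison graph $H = G(a,b,c)$ with $e(H) \ge e(G)$ and $\t(H) = bc = \Omega(\myeta^{3/2} n^2)$, since then \Cref{def:optimal} forces $\t(G) \ge \t(H)$. In other words, I would look for non-negative integers $a, b, c$ with $a + b + c = n$ whose number of missing edges $\binom{b}{2} + \binom{c}{2} + ac$ is at most $\myeta n^2$, while $bc$ is as large as possible.

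A quick continuous analysis (set $b = \beta n$, $c = \gamma n$ with $\gamma$ small, so that $\binom{b}{2} + \binom{c}{2} + ac \approx (\beta^2/2 + \gamma)n^2$) shows that maximising $\beta\gamma$ subject to $\beta^2/2 + \gamma \le \myeta$ gives the optimum $(2/3)^{3/2}\myeta^{3/2}$, at $\beta = \sqrt{2\myeta/3}$ and $\gamma = 2\myeta/3$. I do not need the precise optimum, only the right order of magnitude, so I would make the more forgiving choice $b = \lfloor \sqrt{\myeta}\, n / 2 \rfloor$ and $c = \lfloor \myeta n / 2 \rfloor$, taking $a = n - b - c$. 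This choice gives $\beta^2/2 + \gamma = 5\myeta/8$, leaving a slack of roughly $3\myeta n^2/8$ in the missing-edge inequality, which is more than enough to absorb the lower-order and rounding errors (of order $O(\sqrt{\myeta}\,n)$, comfortably smaller than $\myeta n^2$ because $\myeta n^2 \ge 2n - 8 \to \infty$). Thus $e(H) \ge e(G)$, and $bc \ge \myeta^{3/2} n^2/4 - O(\sqrt{\myeta}\, n) = \Omega(\myeta^{3/2} n^2)$.

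The one minor wrinkle is the edge case $\myeta n < 2$, in which $\lfloor \myeta n/2 \rfloor$ collapses to $0$. Since the hypothesis gives $\myeta n \ge 2 - 8/n$, this can happen only when $\myeta = \Theta(1/n)$, in which case $\myeta^{3/2} n^2 = \Theta(\sqrt{n})$. For this narrow regime I would instead take $c = 1$ and $b = \lfloor \sqrt{n} \rfloor$; the missing-edge count is then $\binom{b}{2} + (n - b - 1) \le 3n/2 \le 2n - 8 \le \myeta n^2$ for $n$ large, while $bc = b = \Omega(\sqrt{n}) = \Omega(\myeta^{3/2} n^2)$. Thus both regimes are covered by an explicit choice of $G(a,b,c)$.

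I do not anticipate any substantive obstacle beyond the routine bookkeeping around the integer rounding and this small case split; the generous constants in the chosen parameters ($5/8$ rather than $1$ in the continuous bound) leave enough room that one never has to be especially careful.
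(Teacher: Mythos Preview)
Your approach is the same as the paper's: exhibit a comparison graph $G(a,b,c)$ with at least as many edges as $G$ and $bc = \Omega(\myeta^{3/2}n^2)$, then invoke optimality. The paper uses the parameters $b=\lfloor\sqrt{2\myeta/3}\,n\rfloor$, $c=\lfloor(2\myeta/3)n\rfloor$ and handles the rounding in a single case via the crude bound $\lfloor x\rfloor \ge x/5$ for $x>1$, whereas you use looser parameters and a case split; both are fine in spirit.

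There is one small slip in your main case: the step ``$bc \ge \myeta^{3/2}n^2/4 - O(\sqrt{\myeta}\,n) = \Omega(\myeta^{3/2}n^2)$'' does not follow when $\myeta n$ is a bounded constant, since then $\sqrt{\myeta}\,n = \sqrt{\myeta n}\cdot\sqrt{n}$ and $\myeta^{3/2}n^2 = (\myeta n)^{3/2}\sqrt{n}$ are of the same order. Your edge-case argument only covers $\myeta n < 2$, leaving $\myeta n \in [2,K]$ (for a suitable constant $K$) unhandled. The fix is immediate: either extend the edge-case treatment to all $\myeta n \le K$, or, more cleanly, note that once $\myeta n/2 \ge 1$ and $\sqrt{\myeta}\,n/2 \ge 1$ you have $\lfloor x\rfloor \ge x/2$ for each, giving $bc \ge \myeta^{3/2}n^2/16$ directly with no subtraction.
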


        \begin{proof}
            Let $G' = G(a, b, c)$, where $b = \floor{\sqrt{2\myeta / 3} n}$, $c
            = \floor{(2 \myeta / 3)n}$ and $a = n - b - c$. Then the number of
            edges missing from $G'$ is at most $\binom{b}{2} + cn \le \myeta n$.
            We now find a lower bound for $\t(G')$, but we need to be
            careful with the rounding-down errors in the definitions of $b$ and
            $c$.

            Recall that $\myeta n^2 \ge 2n - 9$, implying that
            $(2\myeta/3)n \ge 4/3 - 6/n$, hence $c = \floor{(2\myeta/3)n} \ge
            \frac{1}{5} (2\myeta/3)n = (2\myeta/15) n$.
            Since $\myeta n^2 \ge 2n - 9$, we have $\sqrt{\myeta} n \ge \sqrt{2n
            - 9}$, hence, very crudely, $b \ge (\sqrt{\myeta/3})n$.
            It follows that $\t(G') = bc \ge \lambda\myeta^{3/2} n^2$, for
            $\lambda = 2/(15\sqrt{3})$.
            Since $G$ is optimal, it follows that $\t(G) \ge \t(G') =
            \Omega(\myeta^{3/2} n^2)$.
        \end{proof}

        We now define three sets $A, B, C \subseteq V(G)$ that correspond to the
        three parts of a graph $G(a, b, c)$. Let $C$ be the set of vertices of
        degree at most $3n/4$; let $B$ be the set of vertices in $V(G)
        \setminus C$ that have a non-triangular neighbour in $C$; and let
        $A = V(G) \setminus (B \cup C)$. Since any two vertices in $A \cup B$
        have at least $n/2$ common neighbours, there are no non-triangular edges
        in $A \cup B$.  Therefore, all vertices in $A$ are \triangularVs{}, so
        $A$ induces a clique and its vertices have the same neighbourhood in
        $V(G) \setminus A$.

        The next step is to obtain tight bounds for the sizes of $A, B, C$.
        First, we determine the order of magnitude of $|B|$ and $|C|$.

        \begin{claim}
            $|B| = \Theta( \sqrt\myeta n)$ and $|C| = \Theta(\myeta n)$.
        \end{claim}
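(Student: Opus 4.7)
The plan is to establish the four bounds in the natural order: the upper bound on $c$, the lower bound on $b$, the upper bound on $b$, and finally the lower bound on $c$. The first two are easy counting; the main obstacle is the upper bound on $b$.

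First I would bound $c$ from above by a non-edge count: every $v \in C$ has $\deg(v) \le 3n/4$, so its non-degree is at least $n/4 - 1$, and $\sum_{v \in C} (\text{non-deg}(v)) \le 2\myeta n^2$ forces $c \le 9\myeta n$ for large $n$. Next I would use the previous claim that $\t(G) = \Omega(\myeta^{3/2} n^2)$ together with the observation that any two vertices of $A \cup B$ (each of degree $> 3n/4$) share more than $n/2$ common neighbours; hence $A \cup B$ spans no non-triangular edge, and every non-triangular edge of $G$ lies within $C$ or between $B$ and $C$. This gives $\t(G) \le bc + \binom{c}{2}$. Since $c = O(\myeta n)$, the second term is $O(\myeta^{2}n^{2}) = o(\myeta^{3/2}n^2)$, so $bc \ge \Omega(\myeta^{3/2}n^2)$, and therefore $b \ge \Omega(\sqrt{\myeta}n)$.

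The upper bound $b = O(\sqrt{\myeta}n)$ is where the real work lies. A sum-of-degrees computation gives $a + b \ge (1 - O(\myeta))n$, and combined with $c = O(\myeta n)$ this forces $a > 3n/4$ for small $\myeta$. Using the fact that $A$ is a clique of triangular vertices with common outside neighbourhood $N_A$, one can then deduce $B \subseteq N_A$ (otherwise some $u \in B$ would have $\deg(u) \le b+c-1 < 3n/4$) and $C \cap N_A = \emptyset$ (otherwise some $v \in C$ would have $\deg(v) \ge a > 3n/4$), so all $A$-$B$ edges are present and no $A$-$C$ edges exist. Combining this structural picture with the upper bound $\t(G) = O(\myeta^{3/2} n^2)$ coming from \Cref{cor:approx-by-integer-example} and the optimality of $G$, which rules out modifications that simultaneously increase $e(G)$ and $\t(G)$ (potentially invoking \Cref{lem:exchange}), I would then extract $b = O(\sqrt{\myeta} n)$. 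The obvious non-edge counting alone only gives $b = O(\myeta n^{2})$, so this step genuinely needs optimality rather than just degree bookkeeping.

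Finally, the lower bound $c \ge \Omega(\myeta n)$ follows cheaply from $b = O(\sqrt{\myeta}n)$ combined with $bc + \binom{c}{2} \ge \Omega(\myeta^{3/2}n^2)$: either $c \ge \sqrt{\myeta}n$ (which is at least $\myeta n$ for $\myeta \le 1$), or else $bc \ge \Omega(\myeta^{3/2}n^2)$, which together with $b = O(\sqrt{\myeta}n)$ yields $c \ge \Omega(\myeta n)$.
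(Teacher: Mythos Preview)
Your treatment of the upper bound on $|B|$ has a real gap, and the paper's argument there is much simpler than you think.

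First, your route to $B \subseteq N_A$ is circular. From $c = O(\myeta n)$ you only get $a + b \ge (1 - O(\myeta))n$; this does \emph{not} force $a > 3n/4$ unless you already know $b$ is small, which is precisely what you are trying to prove. So the deduction ``$u \in B \setminus N_A$ implies $\deg(u) \le b + c - 1 < 3n/4$'' is unjustified at this stage. Beyond that, the final step --- ``combining this structural picture with $\t(G) = O(\myeta^{3/2} n^2)$ and optimality I would then extract $b = O(\sqrt{\myeta} n)$'' --- is too vague to constitute an argument; it is not clear what modification you intend or how the exchange lemma would yield the bound.

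The paper's argument is a two-line non-edge count, and the ingredient you are missing is \Cref{obs:two-vs}. Since $\t(G) = \Omega(\myeta^{3/2} n^2)$ and every non-triangular edge meets $C$ with $|C| = O(\myeta n)$, some vertex $u \in C$ has $\Omega(\sqrt{\myeta}\,n)$ non-triangular neighbours. By \Cref{obs:two-vs} (this is where optimality enters, but only in this mild form), \emph{every} vertex of $G$ has degree at least $\Omega(\sqrt{\myeta}\,n)$. Now take any $v \in B$: by definition $v$ has a non-triangular neighbour $w \in C$, and $v$ is non-adjacent to every neighbour of $w$, hence $v$ has at least $\Omega(\sqrt{\myeta}\,n)$ non-neighbours. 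Summing over $B$ against the global budget of $\myeta n^2$ non-edges gives $|B| = O(\sqrt{\myeta}\,n)$ directly. Your remark that ``non-edge counting alone only gives $b = O(\myeta n^2)$'' overlooks exactly this: the definition of $B$ already supplies each vertex of $B$ with many non-edges once the minimum degree is known.

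The remaining parts of your plan (upper bound on $|C|$, and the lower bounds on $|B|$ and $|C|$ via $bc + \binom{c}{2} \ge \t(G)$) are fine and match the paper.
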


        \begin{proof}
            By definition, every vertex in $C$ is an end of at least $n/4$
            non-edges. Since there are $\myeta n^2$ non-edges in total,
            we have $|C| = O(\myeta n)$. We know from the previous claim that
            there are at least $\Omega(\myeta^{3/2} n^2)$ non-triangular edges.
            All of these edges have at least one end in $C$, and so some vertex
            in $C$ has at least $\Omega(\sqrt\myeta n)$ non-triangular
            neighbours. By \Cref{obs:two-vs}, every vertex in $G$ has at least
            $\Omega(\sqrt\myeta n)$ neighbours.

            Pick any $v \in B$. By the definition of $B$, $v$ has a
            non-triangular neighbour $u \in C$. This means that $v$ is not
            adjacent to any of the neighbours of $u$, so $v$ is an end of at
            least $\Omega(\sqrt\myeta n)$ non-edges. Therefore, $|B| =
            O(\sqrt\myeta n)$. Moreover, since every non-triangular edge has
            both ends in $C$, or one in $B$ and one in $C$, we have $|B||C| +
            |C|^2/2 \ge \Omega(\myeta^{3/2} n^2)$, which implies that $|B| =
            \Omega(\sqrt\myeta n)$ and $|C| = \Omega(\myeta n)$.
        \end{proof}

        An immediate consequence of the previous claim is that $|A| = (1 -
        O(\sqrt\myeta)) n$. Recall that all vertices in $A$ have the same
        neighbourhood in $V(G) \setminus A$. In particular, each vertex in $B
        \cup C$ is adjacent either to all vertices in $A$ or to none of them.
        Since the vertices in $B$ have degree at least $3n/4$, they are all
        adjacent to all of $A$, and, similarly, there are no edges between $A$
        and $C$. We can use the latter fact to give a better upper
        bound for $C$.

        \begin{claim}
            There is a constant $\xi > 0$ such that $\xi \myeta n \le |C| \le (1
            - \xi) \myeta n$.
        \end{claim}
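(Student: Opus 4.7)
The lower bound $\xi \myeta n \le |C|$ follows directly from the $|C| = \Omega(\myeta n)$ estimate of the previous claim, with $\xi$ taken to be at most the implicit constant there. The substantive content is the upper bound $|C| \le (1-\xi)\myeta n$, which I would prove by contradiction.

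Assume $|C| > (1 - \xi)\myeta n$ for an arbitrarily small constant $\xi > 0$. Since every vertex of $B$ is adjacent to the entire clique $A$, any edge inside $B$ lies in a triangle with a vertex of $A$, so every non-triangular edge of $G$ has an endpoint in $C$. Hence $\t(G) \le |B||C| + \binom{|C|}{2}$, and combined with $\binom{|C|}{2} = o(\myeta^{3/2}n^2)$ and the lower bound $\t(G) = \Omega(\myeta^{3/2}n^2)$ from the previous claim, this forces $|B| = \Omega(\sqrt\myeta n)$ with a sharp constant determined by the extremal ratio. Next, I would use the non-edge identity $\myeta n^2 = |A||C| + (\text{non-edges within } B \cup C)$; the assumption on $|C|$ forces $|A||C| \ge (1 - \xi - o(1))\myeta n^2$, so $G[B \cup C]$ is nearly complete, with at most $(\xi + o(1))\myeta n^2$ non-edges in total.

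The main obstacle is to convert these constraints into a strictly better graph $G^\star$, with $e(G^\star) \ge e(G)$ and $\t(G^\star) > \t(G)$, contradicting the optimality of $G$. The natural candidate is $G^\star = G(|A|, |B| + k, |C| - k)$ for a carefully chosen $k$: within the family $G(a, b, c)$ with $a = |A|$ fixed, the quantity $\t = bc$ subject to a prescribed non-edge count is maximised near $c = (2/3)(n-a)$, and shifting $|C|$ toward that optimum should simultaneously increase $\t$ and $e$ at a rate of order $\myeta n$ per unit of $k$. The delicate point is that $G$ is not literally of the form $G(|A|, |B|, |C|)$, so the comparison must control the $O(\myeta n^2)$-sized discrepancy arising from non-edges within and between $B$ and $C$. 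I expect the Exchange Lemma will be the right tool for converting any edge surplus of $G^\star$ into a non-triangular-edge surplus (or vice versa), yielding the contradiction for all $\xi > 0$ sufficiently small.
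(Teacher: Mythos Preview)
Your proposal takes a much more elaborate route than necessary, and has real gaps. The paper's proof is a four-line counting argument: every vertex of $B$ has a non-triangular neighbour $u\in C$, and since $\deg(u)=\Omega(\sqrt{\myeta}\,n)$ (from the previous claim), that vertex of $B$ is a non-neighbour of all of $N(u)$, hence is an end of $\Omega(\sqrt{\myeta}\,n)$ missing edges. With $|B|=\Theta(\sqrt{\myeta}\,n)$ this gives $\Omega(\myeta n^2)$ missing edges with an end in $B$. These are disjoint from the $|A||C|$ missing $A$--$C$ edges, so $(1-O(\sqrt{\myeta}))n\,|C|+\Omega(\myeta n^2)\le \myeta n^2$, i.e.\ $|C|\le (1-\Omega(1))\myeta n$. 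No optimisation, no comparison graph, no Exchange Lemma.

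By contrast, your plan has several problems. First, the Exchange Lemma (part~1) carries the hypothesis $e\le (1/2-\delta)n^2$, which fails in this almost-complete regime, so you cannot invoke it to trade an edge surplus for non-triangular edges. Second, your claim that the optimum within the family $G(a,b,c)$ with $a$ fixed occurs at $c=(2/3)(n-a)$ is incorrect: the optimum is near $c\approx (2\myeta/3)n$ and $b\approx \sqrt{2\myeta/3}\,n$, so $c/(n-a)\approx \sqrt{2\myeta/3}$, not $2/3$. Third, and most seriously, your comparison between $G$ and $G^\star=G(|A|,|B|+k,|C|-k)$ is not carried out: since $|B|>|C|$ here, shifting mass from $C$ to $B$ \emph{decreases} $bc$, while shifting the other way decreases $e$; and because $G[B\cup C]$ may be nearly complete while $G(|A|,|B|,|C|)$ has $\binom{|B|}{2}=\Theta(\myeta n^2)$ non-edges inside $B$, the baseline $G(|A|,|B|,|C|)$ can already have fewer edges than $G$. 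Turning this into a clean contradiction would require exactly the kind of delicate bookkeeping you flag but do not perform. The direct non-edge count avoids all of this.
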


        \begin{proof}
            The lower bound follows from the previous claim. To prove the upper
            bound, note that every vertex in $B$ is an end of $\Omega(\sqrt
            \myeta n)$ missing edges and $|B| = \Theta(\sqrt \myeta n)$, so
            there are $\Omega(\myeta n^2)$ missing edges with an end in $B$.
            Since all edges between $A$ and $C$ are missing, we have $(1 -
            O(\sqrt\myeta)) n |C| + \Omega(\myeta n^2) \le \myeta n^2$.
            Therefore, $|C| \le (1 - \Omega(1)) \myeta n / (1 -
            O(\sqrt\myeta))$, and the claim follows provided that $\myeta$ is
            sufficiently small.
        \end{proof}

        It is now possible to accurately relate the sizes of $B$ and $C$.  Write
        $|C| = \gamma \myeta n$, where $\xi \le \gamma \le 1 - \xi$.  Define
        $\beta = \sqrt{2(1-\gamma)}$ and note that $\beta = \Theta(1)$, by 
        the previous claim.
    
        \begin{claim} \label{claim:B-size-and-non-triangular}
            $|B| = \beta \sqrt\myeta n + O(\myeta n)$. Moreover, there are at
            least $|B||C| - O(\myeta^2 n^2)$ non-triangular edges between $B$
            and $C$.
        \end{claim}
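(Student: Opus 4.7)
The plan is to first count non-edges carefully and then exploit the optimality of $G$ to pin down $|B|$ and the non-triangular edge count simultaneously. Since $A$ is a clique fully joined to $B$ and disjoint from $C$, the $\myeta n^2$ non-edges of $G$ partition into the $|A||C|$ pairs between $A$ and $C$ plus non-edges inside $B \cup C$; thus
\begin{align*}
\myeta n^2 \;=\; |A||C| + n_{BB} + n_{BC} + n_{CC},
\end{align*}
where $n_{XY}$ denotes the number of non-edges with one end in $X$ and one in $Y$ (and $n_{XX}$ the non-edges inside $X$). Writing $b = |B|$ and $c = |C| = \gamma \myeta n$, substituting $|A| = n - b - c$, and using $b = \Theta(\sqrt{\myeta} n)$ and $c = \Theta(\myeta n)$ from the preceding claim, I would compute $|A||C| = \gamma \myeta n^2 - O(\myeta^{3/2} n^2)$ and $n_{BC} + n_{CC} \le bc + \binom{c}{2} = O(\myeta^{3/2} n^2)$, whence
\begin{align*}
n_{BB} \;=\; (1-\gamma) \myeta n^2 + O(\myeta^{3/2} n^2).
\end{align*}
Since $n_{BB} \le \binom{b}{2}$ and $\beta = \sqrt{2(1-\gamma)}$, this immediately yields the lower bound $b \ge \beta \sqrt{\myeta} n - O(\myeta n)$.

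For the matching upper bound and the ``moreover'' statement, I would compare $G$ with the canonical graph $H = G(|A|, |B|, |C|)$. A short calculation gives $e(H) - e(G) = n_{BC} - e_{BB} - e_{CC}$, where $e_{BB} = \binom{b}{2} - n_{BB}$ and similarly for $e_{CC}$. In the main case $e(H) \ge e(G)$, optimality of $G$ forces $\t(G) \ge \t(H) = bc$; combining with the trivial upper bound $\t(G) \le t_{BC} + t_{CC} \le bc + O(\myeta^2 n^2)$ gives $\t(G) = bc + O(\myeta^2 n^2)$, and consequently $t_{BC} \ge bc - O(\myeta^2 n^2)$, which is the ``moreover'' claim. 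Moreover, $t_{BC} \le bc - n_{BC}$ forces $n_{BC} = O(\myeta^2 n^2)$, so that $e_{BB} \le n_{BC} = O(\myeta^2 n^2)$, and substituting into $\binom{b}{2} = n_{BB} + e_{BB}$ yields $b = \beta \sqrt{\myeta} n + O(\myeta n)$.

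The main obstacle is the opposite case $e(H) < e(G)$ (equivalently, $e_{BB} + e_{CC} > n_{BC}$), where the direct comparison with $H$ fails. To handle it I would perturb $H$ by replacing $b$ with a slightly smaller integer $b'$, chosen so that $e(G(a', b', c)) \ge e(G)$ still holds; applying optimality to this graph yields $\t(G) \ge b' c$, and then invoking \Cref{lem:exchange} to bound the cost of the perturbation shows that $b - b' = O(\myeta n)$ and that the deficit $e_{BB} + e_{CC} - n_{BC}$ is itself $O(\myeta^2 n^2)$, so the same conclusions follow. This careful bookkeeping---collapsing all error terms into the claimed $O(\myeta n)$ on $|B|$ and $O(\myeta^2 n^2)$ on the non-triangular edge count---is the technical crux of the argument.
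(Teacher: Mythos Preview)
Your non-edge count and the lower bound $|B| \ge \beta\sqrt{\myeta}n - O(\myeta n)$ are fine, and your ``main case'' $e(H) \ge e(G)$ with $H = G(|A|,|B|,|C|)$ is handled correctly. The problem is the ``opposite case''. Your plan there is to shrink $|B|$ to some $b'$ until $e(G(a',b',c)) \ge e(G)$, deduce $\t(G) \ge b'c$, and then invoke \Cref{lem:exchange} to bound the perturbation. But the exchange lemma is calibrated for the middle range: its first part requires $e \le (1/2 - \delta)n^2$, which fails here, and its second part needs an excess $x$ with $Cn \le x \le \eps n^2$, whereas the quantities you are juggling are of size $O(\myeta^{3/2} n^2)$ or smaller and there is no lower bound preventing them from being below $Cn$. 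More concretely, a priori $e_{BB}$ could be as large as $\Theta(\myeta n^2)$, so reaching $e(G(a',b',c)) \ge e(G)$ may force $b - b' = \Theta(\sqrt{\myeta} n)$, i.e.\ of the same order as $|B|$ itself; the resulting bound $\t(G) \ge b'c$ is then far too weak to yield either $|B| \le \beta\sqrt{\myeta}n + O(\myeta n)$ or the $O(\myeta^2 n^2)$ non-triangular deficit. So this branch is a genuine gap, not just missing details.

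The paper sidesteps the case split entirely by comparing $G$ not with $G(|A|,|B|,|C|)$ but with $G' = G(a,b,c)$ where $b = \lfloor \beta\sqrt{\myeta}n \rfloor$ and $c = |C|$. Because $\beta$ is \emph{defined} by $\beta^2/2 + \gamma = 1$, one checks directly that the non-edges of $G'$ number at most $b^2/2 + cn \le \myeta n^2$, so $e(G') \ge e(G)$ always holds and optimality gives $\t(G) \ge bc$ unconditionally. From there, some vertex of $C$ has $\ge b$ non-triangular neighbours; by \Cref{obs:two-vs} every vertex has degree $\ge b - 1$, which immediately gives $|B| \ge b - |C| - 1$. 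For the upper bound on $|B|$, the paper uses that every vertex of $B$ has a non-triangular neighbour and is therefore non-adjacent to at least $b - 1$ vertices, so at least $|B|(b-1)/2$ non-edges touch $B$; adding the $|A||C|$ missing $A$--$C$ pairs and comparing with the total $\myeta n^2 \approx b^2/2 + cn$ forces $|B| \le b + O(\myeta n)$. The ``moreover'' statement then drops out from $\t(G) \ge bc = |B||C| - O(\myeta^2 n^2)$. You could salvage your argument by adopting this choice of comparison graph, which makes the troublesome case disappear.
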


        \begin{proof}
            Let $G'$ be the graph $G(a, b, c)$, where $c = |C|$, $b =
            \floor{\beta \sqrt\myeta n}$ and $a = n - b - c$. It is easy to see
            that $b^2/2 + cn \le \myeta n^2$.
            In particular, we have $e(G') \ge \binom{n}{2} - \myeta n^2 = e(G)$.
            Therefore, since $G$ is optimal, $\t(G) \ge \t(G') = bc$.

            Let us again restrict our attention to $G$. Since every
            non-triangular edge has an end in $C$, some vertex in $C$ has at
            least $b$ non-triangular neighbours. It follows from
            \Cref{obs:two-vs} that every vertex in $G$ has degree at least $b -
            1$.
            Moreover, since any vertex in $C$ is adjacent only to
            vertices in $B$ and $C$, we have $|B| \ge b - c - 1 = b - O(\myeta
            n)$.

            Every vertex of $B$ has a non-triangular neighbour, and therefore is
            non-adjacent to at least $b - 1$ vertices. Hence, there are at least
            $|B|(b-1)/2$ missing edges with an end in $B$. Since there are no
            edges between $A$ and $C$, we have
            \begin{equation*}
                \frac{1}{2} |B|(b-1) + \left( 1 - O(\sqrt\myeta) \right) cn
                \;\le\; \myeta n^2
                \;\le\; \frac{1}{2} b^2 + cn + O(\sqrt\myeta n),
            \end{equation*}
            where the latter inequality follows from the definition of $b$.
            It follows that $|B| \le b + O(\myeta n)$. To finish the proof, observe
            that $\t(G) \ge bc = |B||C| - O(\myeta^2 n^2)$, and recall that the
            non-triangular edges of $G$ are either spanned by $C$ (but there are
            $O(\myeta^2 n^2)$ such edges) or they have one end in $B$ and the
            other in $C$.
        \end{proof}

        A standard trick, which we have been using throughout the paper, allows
        us to conclude that $C$ is an independent set.

        \begin{claim} \label{claim:C-indep-dense-case}
           $C$ is independent. 
        \end{claim}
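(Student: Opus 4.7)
The plan is to argue by contradiction: assume $uv$ is an edge with both endpoints in $C$, and construct a graph $G'$ with $|G'| = n$, $e(G') > e(G)$, and $t(G') > t(G)$, contradicting the optimality of $G$. Since $u, v \in C$ have no neighbours in $A$, we have $\deg(u), \deg(v) \le |B| + |C| - 1$. Let $k = |N(u) \cap N(v) \cap B|$ and $k' = |N(u) \cap N(v) \cap C|$. Inclusion--exclusion gives $\deg(u) + \deg(v) \le |B| + |C| + k + k'$, and since each common neighbour $w$ of $u, v$ creates the triangle $uvw$ making both $uw$ and $vw$ triangular, we also obtain $\degNonTriangular(u) + \degNonTriangular(v) \le |B| + |C| - (k + k')$.

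To compensate for non-triangular edges lost upon removing $u$ and $v$, I would pick $c_0 \in C \setminus \{u, v\}$ maximising the number of non-triangular neighbours it has in $B$, and call that set $N$. By \Cref{claim:B-size-and-non-triangular} and averaging, $|N| \ge |B| - O(\myeta n)$. Crucially, $N$ is independent in $G$: any edge $b_1 b_2$ inside $N$ would give a triangle $b_1 b_2 c_0$, contradicting the non-triangularity of $b_1 c_0$.

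The construction then splits on the size of $k + k'$. If $k + k' < |B|/2$, I would remove $u, v$ and add two non-adjacent new vertices $y_1, y_2$ each joined to $N$; the independence of $N$ and the fact that each $y_i$'s neighbours all lie in $N$ make every new edge $y_i b$ non-triangular in $G'$. The two inequalities above then yield $e(G') - e(G) \ge |B|/2 - O(\myeta n)$ and $t(G') - t(G) \ge |B| - O(\myeta n)$, both strictly positive since $|B| = \Theta(\sqrt{\myeta}\, n)$ dominates $|C| = \Theta(\myeta n)$. If instead $k + k' \ge |B|/2$, I would remove $u, v$ and add a new triangular vertex $x$ adjacent to $A \cup B$ together with a vertex $y$ adjacent to $N$; here $x$'s contribution ensures $e(G') - e(G) = \Omega(n)$, while the now-tight bound $\degNonTriangular(u) + \degNonTriangular(v) \le |B|/2 + |C|$ lets the $|N|$ non-triangular edges at $y$ provide $t(G') - t(G) \ge |B|/2 - |C| - O(\myeta n) > 0$. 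Either way we contradict the optimality of $G$. The main subtlety is that a single modification does not cover both regimes of $k + k'$; the case split above is what resolves this, mirroring the spirit of \Cref{claim:C-indep} from the middle range.
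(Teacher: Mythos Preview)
Your argument is correct, but the case split is unnecessary and the paper's proof is considerably shorter. The paper first uses \Cref{obs:two-vs} to show that \emph{every} vertex of $C$ is adjacent to all but $O(\myeta n)$ vertices of $B$: some $c_0 \in C$ has non-triangular degree at least $|B| - O(\myeta n)$, so for any $w \in C$ either $\deg(w) \ge |B| - O(\myeta n)$ or $\degNonTriangular(w) \ge |B| - O(\myeta n)$; since $N(w) \subseteq B \cup C$ and $|C| = O(\myeta n)$, in either case $w$ has at least $|B| - O(\myeta n)$ neighbours in $B$. Applied to both $u$ and $v$, this forces $k \ge |B| - O(\myeta n)$, so your Case~1 is vacuous and only the Case~2 construction (one new vertex joined to $A \cup B$, one joined to the large independent set in $B$) is ever needed. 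The paper then bounds the non-triangular edges lost at $u,v$ directly by $O(\myeta n)$ rather than by $|B|/2 + |C|$, giving a one-line comparison against the $|I| = \Omega(\sqrt{\myeta}\,n)$ gained.

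Two small points on your write-up. First, insisting on $c_0 \in C \setminus \{u,v\}$ is unnecessary and could fail when $|C| = 2$; you only use that $N$ is a large independent subset of $B$, so any $c_0 \in C$ maximising its non-triangular $B$-degree will do. Second, you should say explicitly that adding your new vertices creates no new triangles and hence destroys no existing non-triangular edges: this holds because $N$ is independent, $y_1 y_2$ (respectively $xy$) is a non-edge, and $x$ only sees $A \cup B$ where every edge was already triangular --- but it is worth one sentence.
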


        \begin{proof}
            The second conclusion of \Cref{claim:B-size-and-non-triangular}
            implies that some vertex in $C$ has at least $|B| - O(\myeta n)$
            non-triangular neighbours in $B$. As a consequence, $B$ contains an
            independent set $I$ of size $|B| - O(\myeta n)$. Moreover,
            \Cref{obs:two-vs} implies that every
            vertex in $C$ is adjacent to all but at most $O(\myeta n)$ vertices
            in $B$.

            Suppose that $C$ contains a pair of adjacent vertices $u, v$. Let
            $G'$ be the graph obtained from $G$ by removing the vertices $u$ and
            $v$ and adding new vertices $x$ and $y$ where $x$ is adjacent to all
            of $A \cup B$, and $y$ is adjacent to all of $I$. The removal of $u$
            and $v$ decreases the total number of edges by at most $2(|B| + |C|)
            = O(\sqrt\myeta n)$, while the addition of $x$ and $y$ increases this
            number by at least $|A| = (1 - O(\sqrt\myeta)) n$. Therefore, $e(G')
            > e(G)$. Moreover, since $u$ and $v$ are adjacent, they do not form
            non-triangular edges with their common neighbours.  Hence, $u$ and
            $v$ have at most $O(\myeta n) + |C| = O(\myeta n)$ non-triangular
            neighbours in total.  On the other hand, the addition of $x$ and $y$
            adds $|I| = \Omega(\sqrt\myeta n)$ non-triangular edges. Therefore,
            $\t(G') > \t(G)$, a contradiction to the optimality of $G$.
        \end{proof}

        Finally, we prove that $B$ is an independent set.

        \begin{claim}
            $B$ is independent.
        \end{claim}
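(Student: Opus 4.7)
The plan is to argue by contradiction: assume adjacent $u, v \in B$ and construct a graph $G'$ on $n$ vertices with $e(G') \ge e(G)$ and $\t(G') > \t(G)$, contradicting the optimality of $G$.

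The first step is to exploit the fact that $u$ and $v$ each have a non-triangular neighbour in $C$. If $c_u \in C$ is such a neighbour of $u$, then $N(u)$ and $N(c_u)$ are disjoint, forcing $\deg(u) + \deg(c_u) \le n$; combined with $\deg(c_u) \ge b - 1$ (from \Cref{obs:two-vs} applied in the spirit of the proof of \Cref{claim:B-size-and-non-triangular}), this yields $\deg(u) \le n - \beta\sqrt{\myeta}n + O(\myeta n)$, and symmetrically for $v$. Let $S \subseteq B$ denote the set of vertices of $B$ with a neighbour inside $B$, so $\{u, v\} \subseteq S$ and in particular $|S| \ge 2$. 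A double-counting using \Cref{claim:B-size-and-non-triangular} (which says that the $B$\textup{-}$C$ bipartite subgraph is close to complete bipartite, with only $O(\myeta^2 n^2)$ missing edges) yields $|S| = O(\myeta n)$.

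The main step is the swap, modelled on the corresponding argument in the middle-range proof and on \Cref{claim:C-indep-dense-case}: remove all of $S$ from $G$ and add $|S|$ replacement vertices, some number of which play the role of $A$ (each adjacent to $A \cup (B \setminus S)$ and to every other new vertex) and the remainder of which play the role of $C$ (each adjacent to a fixed independent subset $I \subseteq B \setminus S$ of size $|B| - O(\myeta n)$, which exists by the argument already used in \Cref{claim:C-indep-dense-case}). Using the degree bounds above and the sizes $|A| + |B| = n - |C|$ and $|C| = \Theta(\myeta n)$, a direct calculation shows that for an appropriate split between the two roles, both $e(G')$ and $\t(G')$ strictly exceed $e(G)$ and $\t(G)$.

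The hard part will be the fine balancing of this split: the $\t$-gain scales with the number of $C$-like replacements, while the $e$-gain scales with the number of $A$-like replacements, so the right ratio depends on the parameter $\beta$ (equivalently, on $\gamma = |C|/(\myeta n)$) from \Cref{claim:B-size-and-non-triangular}. The key saving is that $|B|$ and $|C|$ live on very different scales ($\sqrt{\myeta}n$ versus $\myeta n$), so that each $C$-like replacement provides $\Omega(\sqrt{\myeta}n)$ new non-triangular edges against a loss of only $O(\myeta n)$ per removed vertex of $S$. One must additionally check that the removal of $S$ does not create any harmful shifts of triangularity among the surviving edges (removing $S$ can only make previously triangular edges non-triangular, which is helpful), and that the $I$-targeted new vertices indeed contribute non-triangular edges (using the independence of $I$).
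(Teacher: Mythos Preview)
Your swap has a genuine gap, and it is precisely in the ``hard part'' you flag. Your $C$-type replacement vertex is adjacent only to $I \subseteq B \setminus S$, so it has degree $|I| = \Theta(\sqrt{\myeta}\,n)$, whereas each removed vertex $s \in S$ has degree at least $|A| = n - O(\sqrt{\myeta}\,n)$ (since $s \in B$ is adjacent to all of $A$). Thus every $C$-type replacement costs roughly $|A| \approx n$ edges, while each $A$-type replacement gains only about $|B| = \Theta(\sqrt{\myeta}\,n)$ edges over the removed vertex. Concretely, when $|S| = 2$ (the minimal case, which you cannot rule out), taking $q \ge 1$ gives $e(G') - e(G) \le -|A| + 2|B| + O(\myeta n) < 0$, while taking $q = 0$ gives no new non-triangular edges at all, and since each of $u,v$ has at least one non-triangular neighbour in $C$ by the definition of $B$, you get $\t(G') < \t(G)$. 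So no split works. More generally, the feasibility window $m\gamma\sqrt{\myeta}/\beta < q < m\beta\sqrt{\myeta}$ that your two constraints produce is nonempty only when $\gamma < \beta^2 = 2(1-\gamma)$, i.e.\ $\gamma < 2/3$; but the optimal configuration has $\gamma \approx 2/3$, so the window collapses.

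The paper avoids this by a different and simpler swap: it removes only the single pair $u,v$ and replaces them by one $A$-type vertex (adjacent to $A \cup B \setminus \{u,v\}$) and one \emph{$B$-type} vertex $x$ adjacent to $A \cup C$. The point is that $x$ has high degree $|A| + |C|$ (so the edge count is preserved and in fact increases by $|B| - O(\myeta n)$), yet all $|C|$ of its edges into $C$ are non-triangular (since $C$ is independent and there are no $A$--$C$ edges). Combined with the observation that the adjacent pair $u,v$ has \emph{total} non-triangular degree at most $|C|$ (each $c \in C$ can be a non-triangular neighbour of at most one of them), this gives $\t(G') \ge \t(G)$ for free, with no balancing required. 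If you want to salvage your approach, replace your $C$-type vertices by this kind of $B$-type vertex; the detour through all of $S$ and the bound $|S| = O(\myeta n)$ then becomes unnecessary.
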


        \begin{proof}
            Suppose that $u, v \in B$ are adjacent. There are
            at most $|C|$ non-triangular edges with an end in $\{u, v\}$,
            because every vertex can only be a non-triangular neighbour of at
            most one
            of $u$ and $v$. Moreover, by definition, every vertex in $B$ has a
            non-triangular neighbour. Let $w \in C$ be a non-triangular
            neighbour of $u$.  Since the edge $uw$ is non-triangular, it follows
            that $u$ is not adjacent to any of the neighbours of $w$. As explained in
            \Cref{claim:C-indep-dense-case}, $w$ is adjacent to all but at most
            $O(\myeta n)$ vertices in $B$.  Therefore, $u$ had at most $O(\myeta
            n)$ neighbours in $B$ and, likewise, so does $v$. 
 
            Let $G'$ be the graph obtained by replacing $u$ and $v$ with new vertices
            $x$ and $y$ where $x$ is adjacent to all of $A \cup C$ and $y$ is
            adjacent to all of $(A \cup B) \setminus \{u, v\}$. We have $\t(G')
            \ge \t(G)$ and $e(G') \ge e(G) + |B| - 2 - O(\myeta n) > e(G)$,
            contradicting the optimality of $G$. Therefore, there are no
            $B$ is independent.
        \end{proof}

        We have proved that $B$ and $C$ are independent, $A$ is complete,
        and its vertices are adjacent to all of $B$ and none of $C$. We may add
        any missing edges between $B$ and $C$ without creating new
        triangles, so by the optimality of $G$, the vertices in $B$ are
        connected to all of $C$.  It follows that $G$ is isomorphic to $G(|A|,
        |B|, |C|)$, completing the proof of \Cref{thm:almost-complete}.
    \end{proof}

\section{Concluding remarks} \label{sec:conclusion}

    We note that we have not fully resolved
    \Cref{conj:chracterise-extremal-examples}.    
    \conjMain*
    
    \Cref{thm:main} shows that the minimum number of non-triangular edges among
    $n$-vertex graphs with $e$ is attained on a graph $G(a, b, c)$. However, we
    have not shown that such graphs are the only minimisers. 
    Nevertheless, we believe that this fact can be proved (for sufficiently
    large $n$) by retracing our proofs. Since our paper is already quite
    long, we spare the reader any further details.  In any case, we are only
    able to prove the conjecture for sufficiently large $n$, and it would be
    interesting to extend our result to work for all $n$.
    
    We have not specified explicitly how large $n$ should in order for our proof
    to hold, mainly because, due to the complexity of the proof, it is quite
    hard to find such an explicit bound. Nevertheless, we expect this bound to be
    `reasonably small' (say, much smaller than a bound that may arise from the
    use of the regularity lemma), because the inequalities we need to hold are
    polynomial in $n$.

    The following question arises from
    \Cref{conj:chracterise-extremal-examples}, by considering edges on $K_r$ for
    $r \ge 4$.

    \begin{prob}
        How many edges in copies of $K_r$ must an $n$-vertex
        graph with $e$ edges have? Or, more generally, which $n$-vertex graph
        with $e$ edges minimise the number of edges contained in $K_r$'s?
    \end{prob}
    
    It seems reasonable to believe that the
    extremal examples are analogues of graphs $G(a, b, c)$, namely, they may be
    formed by adding a clique to one of the parts of a complete $(r-1)$-partite
    graph with $n$ vertices. We believe that the methods used in this
    paper may be useful when tackling this more general problem.

    We mention another possible generalisation, where instead of minimising the
    number of triangular edges, one wishes to minimise the number of edges
    contained in copies of an odd cycle.

    \begin{prob} \label{prob:odd-cycle}
        How many edges in copies of $C_{2k + 1}$ must an $n$-vertex graph with
        $e$ edges have? Which graphs minimise this quantity?
    \end{prob}

    It turns out that the case $k \ge 2$ is quite different from $k = 1$
    (i.e.~a triangle). Erd\H{o}s, Faudree and Rousseau
    \cite{erdos-faudree-rousseau} proved that for any fixed $k \ge 2$, any graph
    with $n$ vertices and $\floor{n^2 / 4} + 1$ edges has at least $\frac{11}{144}n^2
    - O(n)$ edges in copies of $C_{2k + 1}$, whereas the number of edges on
    triangles is, as mentioned in the introduction, at least $2\floor{n / 2} +
    1$, and the latter bound is best possible. So, the jump in the number of
    $C_{2k + 1}$-edges (for $k \ge 2$) is very sharp, while the jump in the number of
    triangular edges is much smoother.
    
    In the same paper, Erd\H{o}s, Faurdree and Rousseau conjectured a more
    precise statement: for any fixed $k = 2$, there are at most $n^2/36 + O(n)$
    non-pentagonal edges (where a \emph{pentagonal} edge is an edge contained in
    a $C_5$). This can be attained by the union of a complete graph on roughly
    $n/3$ vertices and a complete bipartite graph on the remaining vertices.
    However, an example by F\"uredi and Maleki (see
    the last section in \cite{furedi-maleki}) shows that the conjecture is false: 
    there are $n$-vertex graphs with $\floor{n/4} + 1$ edges and
    $n^2/27.3\ldots + O(n)$ non-pentagonal edges. The example is somewhat
    similar to a graph $G(a, b, c)$: here we have four sets, $A, B, C, D$ of
    sizes $a, b, c, d$, such that $A$ induces a clique, and all possible $A-B$, $B-C$ and
    $C-D$ edges are present. The
    non-pentagonal edges are the $C-D$ edges, and the above value is obtained by
    optimising $a, b, c, d$.
    They are able to calculate, asymptotically, the minimum number of edges on
    $C_{2k + 1}$ (where $k \ge 2$) among $n$-vertex graphs with $e$ edges, where
    $e = \gamma n^2$ for any fixed $1/4 < \gamma < 1/2$. In particular, it turns
    out that the
    example of Erd\H{o}s, Faudree and Rousseau \cite{erdos-faudree-rousseau} is
    asymptotically best possibly for $C_{2k + 1}$ when $k \ge 3$.
    It would be interesting to prove exact versions of these results, in
    particular, it would be interesting to resolve \Cref{prob:odd-cycle} for $e
    = \floor{n^2/4} + 1$, where the methods of F\"uredi and Maleki do not apply.

    Finally, we mention that all the aforementioned problems are special cases
    of the following general problem.
    
    \begin{prob}

        What is the minimum number of edges contained in copies of $F$ among
        $n$-vertex graphs with $e$ edges (where $F$ is any fixed graph)?
        Moreover, what are the extremal examples?

    \end{prob}
    
    F\"uredi and Maleki \cite{furedi-maleki-pentagon} calculated the minimum,
    asymptotically, for $3$-chromatic graphs $F$. For any other $F$,
    this problems is wide open. Finally, we note that it is possible to go even
    further and generalise the problem to the context of hypergraphs. 

    \subsection*{Acknowledgements} 
    
        We would like to thank B\'ela Bollob\'as for bringing this problem to
        our attention. The work on this project started during our stay
        in IMT Lucca, and we would like to thank the institute for their
        hospitality.
    
    \bibliography{trianglebib}
    \bibliographystyle{amsplain}

\end{document}